\title{Forced waves of parabolic-elliptic Keller-Segel models in shifting environments}
\author{
 Wenxian Shen\thanks{Partially supported by the NSF grant DMS--1645673}\,\, and\,\,  Shuwen Xue\\
Department of Mathematics and Statistics\\
Auburn University, AL 36849, USA }
\date{}
\begin{document}

\maketitle

\newtheorem{tm}{Theorem}[section]
\newtheorem{prop}{Proposition}[section]
\newtheorem{defin}{Definition}[section] 
\newtheorem{coro}{Corollary}[section]
\newtheorem{lem}{Lemma}[section]
\newtheorem{assumption}{Assumption}[section]
\newtheorem{rk}{Remark}[section]
\newtheorem{nota}[tm]{Notation}
\numberwithin{equation}{section}

\newcommand{\stk}[2]{\stackrel{#1}{#2}}
\newcommand{\dwn}[1]{{\scriptstyle #1}\downarrow}
\newcommand{\upa}[1]{{\scriptstyle #1}\uparrow}
\newcommand{\nea}[1]{{\scriptstyle #1}\nearrow}
\newcommand{\sea}[1]{\searrow {\scriptstyle #1}}
\newcommand{\csti}[3]{(#1+1) (#2)^{1/ (#1+1)} (#1)^{- #1
 / (#1+1)} (#3)^{ #1 / (#1 +1)}}
\newcommand{\RR}[1]{\mathbb{#1}}

\newcommand{\rd}{{\mathbb R^d}}
\newcommand{\ep}{\varepsilon}
\newcommand{\rr}{{\mathbb R}}
\newcommand{\alert}[1]{\fbox{#1}}
\newcommand{\eqd}{\sim}
\def\p{\partial}
\def\R{{\mathbb R}}
\def\N{{\mathbb N}}
\def\Q{{\mathbb Q}}
\def\C{{\mathbb C}}
\def\l{{\langle}}
\def\r{\rangle}
\def\t{\tau}
\def\k{\kappa}
\def\a{\alpha}
\def\la{\lambda}
\def\De{\Delta}
\def\de{\delta}
\def\ga{\gamma}
\def\Ga{\Gamma}
\def\ep{\varepsilon}
\def\eps{\varepsilon}
\def\si{\sigma}
\def\Re {{\rm Re}\,}
\def\Im {{\rm Im}\,}
\def\E{{\mathbb E}}
\def\P{{\mathbb P}}
\def\Z{{\mathbb Z}}
\def\D{{\mathbb D}}
\newcommand{\ceil}[1]{\lceil{#1}\rceil}

\begin{abstract}
The current paper is concerned with the forced waves of  Keller-Segel chemoattraction systems in shifting environments of the form,
\begin{equation}\label{abstract-eq1}
\begin{cases}
u_t=u_{xx}-\chi(uv_x)_x +u(r(x-ct)-bu),\quad x\in\mathbb{R}\cr
0=v_{xx}- \nu v+\mu u,\quad x\in\mathbb{R},
\end{cases}
\end{equation}
where  $\chi$, $b$, $\nu$, and $\mu$ are positive constants, $c\in\mathbb{R}$, the resource function $r(x)$ is globally  H\"older continuous, bounded, $r^*=\sup_{x\in\R}r(x)>0$, $r(\pm \infty):=\lim_{x\to \pm\infty}r(x)$ exist, and   either $r(-\infty)<0<r(\infty)$, or $r(\pm\infty)<0$. Assume  that  $b>2\chi\mu$. In the case that $r(-\infty)<0<r(\infty)$, it is shown that \eqref{abstract-eq1} has  a forced wave solution connecting $(\frac{r^*}{b},\frac{\mu}{\nu}\frac{r^*}{b})$ and $(0,0)$ with speed $c$  provided that $c>\frac{\chi\mu r^*}{2\sqrt \nu (b-\chi\mu)}- 2\sqrt{\frac{r^*(b-2\chi\mu)}{b-\chi\mu}}$. In the case that $r(\pm\infty)<0$, it is shown that \eqref{abstract-eq1} has a forced wave solution connecting $(0,0)$ and $(0,0)$ with speed $c$ provided that $\chi$ is sufficiently small and $\lambda_\infty>0$, where $\lambda_\infty$ is the generalized principal eigenvalue of the operator $u(\cdot)\mapsto u_{xx}(\cdot)+cu_{x}(\cdot)+r(\cdot)u(\cdot)$  on $\R$ in certain sense.  Some numerical simulations are also carried out. The simulations indicate the existence of forced wave solutions in some parameter regions which are not covered in the theoretical results, induce several problems to be further studied, and also provide some illustration of  the theoretical results.
\end{abstract}

\medskip
\noindent{\bf Key words.} Parabolic-elliptic chemotaxis system, spreading speeds, persistence, forced wave, shifting environment.


\section{Introduction}

This work is concerned with the forced wave solutions   of the attraction Keller-Segel chemotaxis models in  shifting environments of the form
\begin{equation}\label{Keller-Segel-eq0}
\begin{cases}
u_t= u_{xx}- (\chi u  v_x)_x+u(r(x-ct)-bu),\quad x\in\R\cr
0 =v_{xx}-  \nu v +\mu u,\quad x\in\R,
\end{cases}
\end{equation}
where $ b, \nu,\mu$ and $\chi$ are positive constants, {$c\in\R$} and $u(t,x)$ and $v(t,x)$ represent the densities of a
 mobile species and a chemical substance, respectively.  Biologically, the positive constant $\chi$ measures the effect on the mobile species by  the chemical substance which is produced overtime by the mobile species;   the reaction term $u(r(x-ct)-bu)$ in the first  equation of \eqref{Keller-Segel-eq0} describes the local dynamics of the mobile species which depends on the density $u$ and on the shifting habitat with a fixed speed $c$; $\nu$  represents the degradation rate of the  chemical substance;  and $\mu$ is the rate at which the mobile species produces the chemical substance.

  System \eqref{Keller-Segel-eq0} with $r(\cdot)$ being a constant function is a simplified version of the chemotaxis system proposed by Keller and Segel in their works \cite{KeSe1,KeSe2}.
  Chemotaxis describes the oriented movements of biological cells and organisms in response to chemical gradient which they may produce themselves  over time
   and is crucial for many phenomena such
as the location of food sources, avoidance of predators and attracting mates, slime mold aggregation,
tumor angiogenesis, and primitive streak formation. Chemotaxis is also crucial in
macroscopic process such as population dynamics and gravitational collapse. The reader is referred to  \cite{KJPainter} for  applications of chemotaxis models in a wide range of biological phenomena.
  The reader is also referred to \cite{HiPa, Hor}  for some detailed introduction into the mathematics of Keller-Segel chemotaxis models.

In this paper, we consider \eqref{Keller-Segel-eq0} with $c\not =0$ and $r(\cdot)$ being a sign changing function.
 In particular, we will consider the following two cases:

\medskip

\noindent {\bf Case 1.} {\it Favorable and unfavorable habitats are separated in the sense that  $r(x)$ is globally  {H\"older continuous}, bounded,   the limits $r(\pm\infty):=\lim_{x\to \pm\infty}r(x)$ exist and are finite, and $r(-\infty)<0<r(\infty)$}, $r(-\infty)\leq r(x)\leq r(\infty),\, \forall\, x\in\R$.

\medskip

\noindent {\bf Case 2.} {\it Favorable habitat is surrounded by unfavorable habitat in the sense that {$r(x)$ is globally  H\"older continuous, bounded,}  $\sup_{x\in \R}r(x)>0$,  the limits $r(\pm\infty):=\lim_{x\to \pm\infty}r(x)$ exist and are finite, and
 $r(\pm\infty)<0$, $\min\{r(\infty),r(-\infty)\}\leq r(x),\, \forall\, x\in\R$. }

\medskip

  As described in \cite{ShXu},   in {\bf Case 1}, $r(x-ct)$ divides the spatial domain into two regions: the region with good-quality habitat suitable for growth \{$x\in\R$: $r(x-ct)>0$\} and the region with poor-quality habitat unsuitable for growth \{$x\in\R$: $r(x-ct)<0$\}. The edge of the habitat suitable for species growth is shifting at a speed $c$.
 In {\bf Case 2}, $r(x-ct)$ still divides the spatial domain into two regions: one favorable for growth \{$x\in\R$: $r(x-ct)>0$\} and one unfavorable for growth \{$x\in\R$: $r(x-ct)<0$\}. The favorable habitat is bounded and surrounded by the unfavorable habitat. The favorable habitat is shifting at a speed $c$.

\smallskip

Consider \eqref{Keller-Segel-eq0}. It is important to know whether the species will become extinct or  persist;  whether it spreads into larger and larger  regions, and if so, how fast it spreads; whether the system has so called forced wave solutions.
A positive solution $(u(t,x),v(t,x))$ of \eqref{Keller-Segel-eq0} is called a {\it forced wave solution} if
 it is defined for all $t\in\R$, {$x\in\R$}, and $(u(t,x),v(t,x))=(\phi(x-ct),\psi(x-ct))$ for some one variable functions $\phi(\cdot)$ and $\psi(\cdot)$.

\smallskip

There are many studies on these  problems  for \eqref{Keller-Segel-eq0} in the absence of the chemotaxis (i.e. $\chi=0$).
Note that in the absence of the chemotaxis $(\chi=0)$, the first equation in \eqref{Keller-Segel-eq0} is decoupled from the second equation, and
the dynamics of \eqref{Keller-Segel-eq0} is determined by the first equation of \eqref{Keller-Segel-eq0}, that is,
\begin{equation}\label{fisher-kpp1}
u_t=u_{xx}+u(r(x-ct)-bu), \quad x\in\R.
\end{equation}

For example,
in {\bf  Case 1},
Li et al. \cite{LBSF} studied the spatial dynamics of system \eqref{fisher-kpp1} { for the case $b=1$} and they showed that the persistence and spreading dynamics of \eqref{fisher-kpp1} depend on the speed of the shifting habitat edge $c$ and a number $c^*$, where $c^*=2\sqrt{r(\infty)}$ for \eqref{fisher-kpp1}. More precisely, they proved that if $c>c^*$, then the species will become extinct in the habitat, and if $0<c<c^*$, the species will persist and spread along the shifting habitat at the asymptotic spreading speed $c^*$. Recently, Hu and Zou \cite{HHXZ} demonstrated that { in the case $b=1$, }for any given speed $c>0$ of the shifting habitat edge, \eqref{fisher-kpp1} admits a nondecreasing traveling wave
solution $u(t,x)=\phi(x-ct)$  connecting $0$ and $r(\infty)$ (i.e. $\phi(-\infty)=0$ and $\phi(\infty)=r(\infty)$)  with the speed $c$ agreeing to the habitat shifting speed, which accounts for an extinction wave.
Very recently, Wang and Zhao \cite{WaZh} obtained the uniqueness of the forced wave of \eqref{fisher-kpp1} by using the sliding technique and established the global exponential stability of the  forced wave via the monotone semiflows approach combined with the method of super- and subsolutions (see \cite[Theorem 2.3]{WaZh}).

In {\bf Case 2},
Berestycki et al. \cite{BDNZ} proposed to use the following  reaction-diffusion equation with a forced speed $c>0$ to study the influence of climate change on the population dynamics of biological species:
\begin{equation}\label{B-fisher-kpp1}
u_t= u_{xx}+f(x-ct, u), \quad x\in\R.
\end{equation}
A typical $f$ considered in \cite{BDNZ} is
$$
f(x,u)=\begin{cases}
au(1-\frac{u}{K}),\quad &0\leq x \leq L,\\
-ru,\quad &x<0 \,\ {\rm and }\,\ x>L
\end{cases}
$$
for some positives constants $a, r, K, L$. They first considered this special case and derived an explicit condition for the persistence of species by gluing phase portraits. Then they established a strict qualitative dichotomy for a large class of models by the rigorous PDE methods. More precisely, they showed that if $\lambda_{\infty}$, defined to be the generalized {principal} eigenvalue of the operator $u\to u_{xx}+cu_{x}+f_{u}(x,0)u$ on $\R$, is less than or equal to zero, then \eqref{B-fisher-kpp1} has no forced wave solution and every positive solution of \eqref{B-fisher-kpp1} converges to zero as $t\to\infty$, uniformly in $x$. If $\lambda_{\infty}>0$, \eqref{B-fisher-kpp1} has a unique forced wave solution and every nontrivial positive solution of \eqref{B-fisher-kpp1} converges to this unique forced wave solution as $t\to\infty$, uniformly in $x$.

It should be pointed out that the paper \cite{PoLe}  addresses the same question as in \cite{BDNZ}, but focuses on the effect
of a moving climate on the outcome of competitive interactions between two species.
There are also many studies on forced wave solution of \eqref{B-fisher-kpp1} with different shifting habitats. For example,  Fang, Lou and Wu \cite{FaLoWu} established the existence and nonexistence of forced waves and
pulse waves of \eqref{B-fisher-kpp1} with $f$ of the form $f(x-ct, u)=u(r(x-ct)-u)$ where $r(-\infty)>0>r(\infty)$. Berestycki and Fang \cite{BeFa} obtained the complete existence and multiplicity of forced waves as well as their attractivity except for some critical cases under the condition $f:(s,u)\in\R\times\R_{+}\to\R$ is asymptotically of KPP type. For other related works on climate change problem with different shifting habitats for certain reaction-diffusion equations, nonlocal dispersal equations, lattice differential equations,   as well as integro-difference equations, we refer the readers to \cite{DuWeZh, CHBL, LeShZh,  ChDu, LeMaSh, LiBeBaFa, LiWaZh,  Vo, WaZh, ZhKo} and the references therein.

Observe that, when $r(x)\equiv r$, \eqref{fisher-kpp1} becomes
\begin{equation}\label{fisher-kpp0}
u_t=u_{xx}+u(r-bu), \quad x\in\R.
\end{equation}
 Due to the pioneering works of Fisher
\cite{Fisher} and Kolmogorov, Petrowsky, Piskunov \cite{KPP} on traveling wave solutions and take-over properties of \eqref{fisher-kpp0},
\eqref{fisher-kpp0} is also referred to as  the Fisher-KPP equation.
 The following results are well known about the traveling wave solutions and  spreading speeds of \eqref{fisher-kpp0}.
Equation \eqref{fisher-kpp0} has  traveling wave solutions $u(t,x)=\phi(x-ct)$
connecting $\frac{r}{b}$ and $0$ (i.e. $\phi(-\infty)=\frac{r}{b}$ and $\phi(\infty)=0$) of all speeds $c\geq 2\sqrt r$ and
has no such traveling wave solutions of slower speed. Such a traveling wave solution is unique up to a translation.
For any
nonnegative solution $u(t,x)$ of (\ref{fisher-kpp0}), if at
time $t=0$, $u(0,x)=u_0(x)$ is $\frac{r}{b}$ for $x$ near $-\infty$ and $0$ for $x$ near $ \infty$, then
$$\limsup_{x \ge ct, t\to \infty}u(t,x)=0 \quad \forall \, c>2\sqrt r
$$
and
$$\limsup_{x \le ct, t\to \infty}|u(t,x)-\frac{r}{b}|=0\quad \forall\,  c<2\sqrt r.
$$
In
literature, $c^*_0=2\sqrt r$ is   called the {\it
spreading speed} for \eqref{fisher-kpp0}.  Since the pioneering works by  Fisher \cite{Fisher} and Kolmogorov, Petrowsky,
Piscunov \cite{KPP},  a huge amount of research has been carried out toward the front propagation dynamics of
  reaction diffusion equations of the form,
\begin{equation}
\label{general-fisher-eq}
u_t=\Delta u+u f(t,x,u),\quad x\in\R^N,
\end{equation}
where $f(t,x,u)<0$ for $u\gg 1$,  $\partial_u f(t,x,u)<0$ for $u\ge 0$ (see \cite{ArWe2, BHN,  BeHaNa1, BeHaNa2, Henri1, Fre, FrGa, LiZh, LiZh1, Nad, NoRuXi, NoXi1, She1, She2, Wei1, Wei2, Zla}, etc.).

Comparing with the Fisher-KPP equation, traveling wave solutions and  spreading speeds of chemotaxis models have only been studied recently.
For example, Salako and Shen studied in \cite{SaSh3, SaSh2, SaSh1} the spatial spreading dynamics of \eqref{Keller-Segel-eq0} with constant growth rate $r>0$
 and obtained several fundamental results. Some lower and upper bounds for the propagation speeds of solutions with compactly supported initial functions were derived, and some lower bound for the speeds of traveling wave solutions was also derived. It is proved that all these bounds converge to the spreading speed $c_0^*=2\sqrt r$ of \eqref{fisher-kpp0} as $\chi\to 0$.  Assume that $2\chi\mu<b$. It's also proved that there is a positive constant $c^*(\chi, r, b, \nu, \mu)>c^*_0=2\sqrt{r}$ such that  for any $c>c^*(\chi, a, b, \lambda, \mu)$, \eqref{Keller-Segel-eq0} with constant growth rate $r>0$ has a traveling wave solution $(u(t,x),v(t,x))=(U(x-ct),V(x-ct))$ with speed $c$ connecting the constant solutions $(\frac{r}{b}, \frac{\mu}{\nu}\frac{r}{b})$ and $(0,0) ${(i.e $U(-\infty)=\frac{r}{b}$ and $U(\infty)=0$)}.
 The reader is also referred to  \cite{FhCh} for the lower and upper bounds of propagation speeds of \eqref{Keller-Segel-eq0} with constant growth rate.

Very recently, the authors of the  paper \cite{SaShXu} improved the results in \cite{SaSh2}.
It is proved that in the case of constant growth rate $r>0$, if $b>\chi\mu$ and $\big(1+\frac{1}{2}\frac{(\sqrt{r}-\sqrt{\nu})_+}{(\sqrt{r}+\sqrt{\nu})}\big)\chi\mu { \leq} b$ hold, then $2\sqrt r$ is the spreading speed of the solutions $(u(t,x;u_0),v(t,x;u_0))$ of   \eqref{Keller-Segel-eq0} with nonnegative  compactly supported continuous initial functions $u_0$, that is,
 $$
 \lim_{t\to\infty}\sup_{|x|\ge ct}u(t,x;u_0)=0\quad \forall\, c>2\sqrt r
 $$
 and
 $$
 \liminf_{t\to\infty}\inf_{|x|\le ct} u(t,x;u_0)>0\quad \forall \, 0<c<2\sqrt r.
 $$
 It is also proved that,  if  $b>2\chi\mu$  and $\nu\ge r$ hold, then  $c_0^*=2\sqrt r$ is the minimal speed  of the  traveling wave solutions of \eqref{Keller-Segel-eq0} with constant growth rate $r>0$  connecting $(\frac{r}{b},\frac{\mu}{\nu}\frac{r}{b})$ and $(0,0)$, that is,
  for any $c\ge c_0^*$, \eqref{Keller-Segel-eq0} with constant growth rate $r>0$
 has a  traveling wave solution connecting $(\frac{r}{b},\frac{\mu}{\nu}\frac{r}{b})$ and $(0,0)$ with speed $c$,
 and \eqref{Keller-Segel-eq0} with constant growth rate $r>0$ has no such traveling wave solutions with speed less than $c_0^*$,
 where $(u(t,x;u_0),v(t,x;u_0))$ is the unique global classical solution of   \eqref{Keller-Segel-eq0} with $u(0,x;u_0)=u_0(x)$.

 In the recent work \cite{ShXu}, the authors of the current paper investigated the persistence and spreading speeds of \eqref{Keller-Segel-eq0}
 when $r(x)$ is as in {\bf Case 1} or {\bf Case 2}.
Assume $b>\chi\mu$ and  $b\ge \big(1+\frac{1}{2}\frac{(\sqrt{r^*}-\sqrt{\nu})_+}{(\sqrt{r^*}+\sqrt{\nu})}\big)\chi\mu$,
where $r^*=\sup_{x\in\R} r(x)$.
In the {\bf Case 1}, it is shown that if the moving speed {$c>c^*:=2\sqrt{r^*}$},
then the species becomes extinct in the habitat. If the moving speed $ -c^*\leq c<c^*$, then the species will persist and spread along the shifting habitat  at the asymptotic spreading speed $c^*$. If the moving speed $c<-c^*$, then the species will spread in the both directions at the asymptotic spreading speed $c^*$. In the {\bf Case 2}, it is shown that  if $|c|>c^*$, then the species will become extinct in the habitat. If $\lambda_{\infty}$, defined to be the generalized {principal} eigenvalue of the operator $u\to u_{xx}+cu_{x}+r(x)u$ on $\R$, is negative and the degradation rate $\nu$ of the {chemical substance} is greater than or equal to {the number $\nu^*:={\frac{(\sqrt{8r^*+c^2}+|c|)^2}{4}}$ }, then the species will also become extinct in the habitat. If $\lambda_{\infty}>0$, then the species will persist surrounding the good habitat.

  The objective of the current paper is to investigate  the existence of forced wave solutions  of \eqref{Keller-Segel-eq0}
  both theoretically and numerically.

 In the rest of this introduction, we state the main results of this paper.

\subsection{Statement of  the  main results.}

In order to state our main results, we first introduce some notations and definitions. Let
$$
C^b_{\rm unif}(\R)=\{ u\in C(\R) \, |\, \ u\ \text{is uniformly continuous and bounded on $\R$}\}.
$$
For every $u\in C^b_{\rm unif}(\R)$, we let $\|u\|_{\infty}:=\sup_{x\in\R}|u(x)|$. For each given  $u_0\in C^b_{\rm unif}(\R)$  with $u_0(x)\geq 0$, we denote by $(u(t,x;u_0),v(t,x;u_0))$ the classical solution of \eqref{Keller-Segel-eq0} satisfying $u(0,x;u_0)=u_0(x)$ for every $x\in\R$. Note that, by the comparison principle for parabolic equations, for every  nonnegative initial function $u_0\in C^b_{\rm unif}(\R)$, it always holds that $u(t,x;u_0)\geq 0$ and $v(t,x;u_0)\geq 0$ whenever $(u(t,x;u_0),v(t,x;u_0))$ is defined. In this work we shall only focus on nonnegative classical solutions of \eqref{Keller-Segel-eq0}  since both functions $u(t,x)$ and $v(t,x)$ represent density functions.

The following proposition states the existence and uniqueness of solutions of \eqref{Keller-Segel-eq0} with given initial functions.

\begin{prop}\label{existence-uniqueness}
Suppose that $r(x)$ is globally H\"older continuous and bounded.
For every nonnegative initial function $u_0\in C^b_{\rm unif}(\R)$, there is a unique maximal time $T_{max}>0$, such that $(u(t,x;u_0),v(t,x;u_0))$ is defined for every $x\in\R$ and $0\le t<T_{\max}$. Moreover if $\chi\mu<b$ then $T_{max}=\infty$ and the solution is globally bounded.
\end{prop}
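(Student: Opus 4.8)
The plan is to eliminate $v$ and reduce the system to a single nonlocal semilinear parabolic equation for $u$, establish local well-posedness by a standard fixed-point argument, and then obtain an a priori $L^\infty$ bound that rules out finite-time blow-up precisely when $\chi\mu<b$. First I would solve the elliptic equation $0=v_{xx}-\nu v+\mu u$ explicitly: for each fixed $u(t,\cdot)\in C^b_{\rm unif}(\R)$ one has
\[
v(t,x)=\frac{\mu}{2\sqrt\nu}\int_{\R}e^{-\sqrt\nu|x-y|}u(t,y)\,dy,
\]
which is well defined, nonnegative when $u\ge 0$, and satisfies $\|v(t,\cdot)\|_\infty\le\frac{\mu}{\nu}\|u(t,\cdot)\|_\infty$ together with analogous bounds on $v_x$ and $v_{xx}$ in terms of $\|u(t,\cdot)\|_\infty$. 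Substituting $v_{xx}=\nu v-\mu u$ into the expanded first equation $u_t=u_{xx}-\chi u_x v_x-\chi u\,v_{xx}+u(r(x-ct)-bu)$ converts the chemotactic term into a reaction contribution and yields the equivalent scalar equation
\[
u_t=u_{xx}-\chi v_x\,u_x+u\big(r(x-ct)-\chi\nu v-(b-\chi\mu)u\big),
\]
with $v=v[u]$ given by the convolution above.

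For local existence and uniqueness I would write this equation in mild form using the heat semigroup $e^{t\partial_{xx}}$ on $C^b_{\rm unif}(\R)$,
\[
u(t)=e^{t\partial_{xx}}u_0+\int_0^t e^{(t-s)\partial_{xx}}F(u(s))\,ds,
\]
where $F(u)=-\chi v_x u_x+u(r(x-ct)-\chi\nu v-(b-\chi\mu)u)$ depends on $u$, $u_x$ and the nonlocal quantities $v[u],v_x[u]$. The first-order term $u_x$ is handled by the smoothing estimate $\|\partial_x e^{t\partial_{xx}}w\|_\infty\le C t^{-1/2}\|w\|_\infty$, whose singularity $t^{-1/2}$ is integrable; combined with the Lipschitz bounds on $v[u],v_x[u]$ this makes the fixed-point map a contraction on $C([0,T];C^b_{\rm unif}(\R))$ for $T$ small, giving a unique local solution. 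Continuing the local solution yields a maximal existence time $T_{\max}$ with the blow-up alternative that $T_{\max}<\infty$ forces $\limsup_{t\to T_{\max}^-}\|u(t,\cdot)\|_\infty=\infty$; nonnegativity of $u$ (hence of $v$) follows from the parabolic comparison principle as already noted in the text.

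The decisive step is the global $L^\infty$ bound under $\chi\mu<b$, and here the sign gained in the reduction is exactly what is needed. Set $K=\max\{\|u_0\|_\infty,\,r^*/(b-\chi\mu)\}$ with $r^*=\sup_x r(x)$ (and $K=\|u_0\|_\infty$ if $r^*\le 0$). Because $b-\chi\mu>0$, the spatially constant function $\bar u\equiv K$ is a supersolution of the scalar equation: indeed $\bar u_t=0$ while, using $r(\cdot)\le r^*$ and $v\ge 0$, the right-hand side evaluated at $\bar u$ is $K(r(x-ct)-\chi\nu v-(b-\chi\mu)K)\le K(r^*-(b-\chi\mu)K)\le 0$. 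Since $u(0,\cdot)\le K$, the comparison principle, applied to the linear-in-principal-part parabolic operator obtained by freezing the known coefficients $v,v_x$ along the solution, gives $0\le u(t,x)\le K$ for all $x$ and all $t\in[0,T_{\max})$. This uniform bound contradicts the blow-up alternative, so $T_{\max}=\infty$ and the solution is globally bounded.

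I expect the main obstacle to be the rigorous justification of the comparison argument for the nonlocal equation: $v=v[u]$ is not a prescribed coefficient but is slaved to the unknown $u$, so one must argue that, once a solution $u$ is fixed, the equation is a genuine linear parabolic equation in $u$ with bounded drift $-\chi v_x$ and bounded zeroth-order coefficient, for which the maximum principle applies; the quadratic damping $-(b-\chi\mu)u^2$ then does the work, and the whole mechanism hinges on $\chi\mu<b$ turning the chemotactic feedback $+\chi\mu u^2$ into a net dissipative term. The local existence estimates and the elliptic bounds on $v$ are routine and follow the lines of the earlier references on the constant-rate system.
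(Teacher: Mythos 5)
Your proposal is correct and takes essentially the same route as the paper, whose proof is simply a citation to the arguments of Salako and Shen (Theorems 1.1 and 1.5 of \cite{SaSh1}): the identical reduction to the scalar nonlocal equation $u_t=u_{xx}-\chi v_x u_x+u\big(r(x-ct)-\chi\nu v-(b-\chi\mu)u\big)$ with $v$ given by the convolution kernel $\frac{\mu}{2\sqrt\nu}e^{-\sqrt\nu|x-y|}$, a semigroup fixed-point argument for local existence, and the comparison bound $0\le u\le\max\{\|u_0\|_\infty,\,r^*/(b-\chi\mu)\}$, which the paper records explicitly at the start of Section 2. The only detail to tighten is that the contraction map should be set up with the chemotaxis term kept in divergence form, so that the spatial derivative falls on the heat kernel via $\|\partial_x e^{t\partial_{xx}}w\|_\infty\le Ct^{-1/2}\|w\|_\infty$, rather than having $u_x$ appear explicitly in $F$ on a space that does not control it.
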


The above proposition can be proved by similar arguments as those in (\cite[Theorem 1.1 and Theorem 1.5]{SaSh1}).

Throughout this paper, we assume that $r(x)$ is as in {\bf Case 1} or {\bf Case 2}. We put
\begin{equation}
\label{r-c-star}
r_*=\inf_{x\in\R} r(x),\quad r^*=\sup_{x\in\R} r(x),\quad c^*=2\sqrt {r^*}.
\end{equation}
Note that, in {\bf Case 1}, $r_*=r(-\infty)$ and $r^*=r(+\infty)$, and in {\bf Case 2}, $r_*=\min\{ r(-\infty), r(+\infty)\}$ and $r^*=\max_{x\in\R} r(x)$.

Let $\lambda_L(r(\cdot),c)$ be the principal eigenvalue of
\begin{equation}
\label{ev-eq0}
\begin{cases}
\phi_{xx}+c\phi_x+ r(x) \phi=\lambda \phi,\quad -L<x<L\cr
\phi(-L)=\phi(L)=0.
\end{cases}
\end{equation}
Note that $\lambda_L(r(\cdot),c)$ is increasing as $L$ increases.
Let $\lambda_\infty(r(\cdot),c)=\lim_{L\to\infty}\lambda_L(r(\cdot),c)$.

For convenience, we make the following standing assumptions.

\medskip

\noindent {\bf (H1)}
$b>2\chi\mu$ and $c>\frac{\chi\mu r^*}{2\sqrt{\nu}(b-\chi\mu)}-2\sqrt{\frac{r^*(b-2\chi\mu)}{b-\chi\mu}}$.

 \medskip

 \noindent {\bf (H2)} $b\geq \frac{3}{2}\chi\mu$ and $\lambda_\infty(r(\cdot),c)>0$.

 \medskip

 Note  that $\lambda_\infty(r(\cdot),c)>0$ implies that $-2\sqrt{r^*}<c<2\sqrt {r^*}$.

A positive solution $(u(t,x),v(t,x))$ of \eqref{Keller-Segel-eq0} is  called a {\it forced wave solution} if
 it is defined for all $t\in\R$, {$x\in\R$} and $(u(t,x),v(t,x))=(\phi(x-ct),\psi(x-ct))$ for some one variable functions $\phi(\cdot)$ and $\psi(\cdot)$.
 It is clear that $(u,v)=(\phi(x),\psi(x))$ is a stationary solution of
 \begin{equation}\label{wave-eq}
\begin{cases}
u_t= u_{xx}+cu_x- (\chi u  v_x)_x+u(r(x)-bu),\quad x\in\R\cr
0 =v_{xx}-  \nu v +\mu u,\quad x\in\R.
\end{cases}
\end{equation}
We say that a positive {\it  forced wave solution $(u(t,x),v(t,x))=(\phi(x-ct),\psi(x-ct))$  of \eqref{Keller-Segel-eq0} connects $(u^*_+,v^*_+)$ and
$(u^*_-,v^*_-)$}  if $(\phi(\pm\infty),\psi(\pm\infty))=(u^*_\pm,v^*_\pm)$.

 \medskip

Our main theoretical results are stated in the following two theorems.

\begin{tm}
\label{forced-wave-thm1}
 Suppose that $r(x)$ is as in {\bf Case 1},  and {\bf (H1)} holds. Then there is a forced wave solution
$(u(t,x),v(t,x))=(\phi(x-ct),\psi(x-ct))$ connecting {$(\frac{r^*}{b},\frac{\mu}{\nu}\frac{r^*}{b})$ and $(0,0)$.}
\end{tm}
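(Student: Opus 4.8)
The plan is to produce the forced wave as a bounded positive stationary solution of the wave system \eqref{wave-eq} on all of $\R$, obtained as a locally uniform limit of solutions on the truncated intervals $(-L,L)$ and trapped between an explicit constant super-solution and a carefully designed front-type sub-solution whose admissibility is exactly the content of \textbf{(H1)}. Throughout, the a priori bound $0\le\phi\le\frac{r^*}{b}$ from Proposition~\ref{existence-uniqueness} (and the super-solution) is used to control the chemotactic terms.

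First I would eliminate the chemical component. Solving $0=\psi_{xx}-\nu\psi+\mu\phi$ gives the nonlocal representation $\psi=\psi[\phi]=\frac{\mu}{2\sqrt\nu}\,e^{-\sqrt\nu|\cdot|}*\phi$, together with the pointwise bounds $0\le\psi\le\frac{\mu}{\nu}\|\phi\|_\infty$ and $\|\psi_x\|_\infty\le\frac{\mu}{2\sqrt\nu}\|\phi\|_\infty$. Using $\psi_{xx}=\nu\psi-\mu\phi$ to expand $(\phi\psi_x)_x=\phi_x\psi_x+\phi(\nu\psi-\mu\phi)$, the first equation of \eqref{wave-eq} becomes the single nonlocal scalar equation
\begin{equation}
\phi_{xx}+(c-\chi\psi_x)\phi_x+\phi\big(r(x)-\chi\nu\psi-(b-\chi\mu)\phi\big)=0 ,
\end{equation}
with $\psi=\psi[\phi]$. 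This has KPP structure with positive self-limitation coefficient $b-\chi\mu$, an effective chemotaxis-reduced growth rate that one recognizes as $\frac{r^*(b-2\chi\mu)}{b-\chi\mu}$ (positive precisely because $b>2\chi\mu$, which is why \textbf{(H1)} requires it), and a chemotactic advection bounded by $\chi\|\psi_x\|_\infty\le\frac{\chi\mu r^*}{2\sqrt\nu(b-\chi\mu)}$.

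The super-solution is the constant $\bar\phi\equiv\frac{r^*}{b}$: since in \textbf{Case 1} one has $r(x)\le r(\infty)=r^*$ for all $x$, the differential inequality is immediate. For the sub-solution I would take a profile that decays exponentially as $x\to-\infty$ and is bounded below by a positive constant for $x$ large, e.g. a truncated or two-exponential variant of $\underline\phi(x)=\min\{\frac{r^*}{b},\gamma e^{\lambda x}\}$. Inserting $\gamma e^{\lambda x}$ into the reformulated equation and bounding the advection by its sup reduces the sub-solution inequality to an effective characteristic equation of the form $\lambda^2+\big(c-\frac{\chi\mu r^*}{2\sqrt\nu(b-\chi\mu)}\big)\lambda+\frac{r^*(b-2\chi\mu)}{b-\chi\mu}\le 0$ having an admissible real root, and this solvability is exactly the classical minimal-speed inequality "drift $-\,2\sqrt{\text{effective growth}}$," i.e. $c>\frac{\chi\mu r^*}{2\sqrt\nu(b-\chi\mu)}-2\sqrt{\frac{r^*(b-2\chi\mu)}{b-\chi\mu}}$ of \textbf{(H1)}. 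With these barriers fixed, I would solve the truncated Dirichlet problem on $(-L,L)$ (with data $\phi(-L)=0$ and $\phi(L)=\frac{r^*}{b}$) by the Schauder fixed-point theorem applied to the map sending $\phi$ to the solution of the linearized problem with frozen drift $c-\chi\psi_x[\phi]$; the order interval $[\underline\phi,\bar\phi]$ is invariant and elliptic $L^p$/Schauder estimates give compactness. Letting $L\to\infty$ and using interior estimates to extract a locally uniform limit yields a solution $\phi$ on $\R$ with $\underline\phi\le\phi\le\frac{r^*}{b}$. The sandwich then forces $\phi(-\infty)=0$, while the uniform positive lower barrier near $+\infty$ together with an autonomous tail analysis (where $r(x)\to r^*$ and the only admissible positive limit state is $\frac{r^*}{b}$) upgrades this to $\phi(+\infty)=\frac{r^*}{b}$; the asymptotics of $\psi=\psi[\phi]$ follow.

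I expect the main obstacle to be the sub-solution. Because the cross-diffusion destroys the comparison principle for the full system, the sub-solution inequality must be verified directly for the reformulated nonlocal equation, with the advection $\chi\psi_x$ and the zeroth-order perturbation $\chi\nu\psi$ estimated \emph{simultaneously} against the effective growth; it is precisely the balance between the drift bound $\frac{\chi\mu r^*}{2\sqrt\nu(b-\chi\mu)}$ and the reduced growth $\frac{r^*(b-2\chi\mu)}{b-\chi\mu}$ that makes \textbf{(H1)} sharp enough for the construction to close. A secondary difficulty is guaranteeing that the limit $L\to\infty$ does not collapse to the trivial solution, which is handled by keeping the positive lower barrier uniform in $L$.
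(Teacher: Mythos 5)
Your overall architecture --- eliminating $v$ through the kernel \eqref{psi-definition}, trapping a solution of the decoupled nonlocal equation between barriers, passing to the limit, and identifying the limit at $+\infty$ by a Liouville-type argument --- is indeed the paper's architecture. But two of your concrete steps fail. First, the constant $\frac{r^*}{b}$ is \emph{not} a super-solution of the decoupled equation, and the inequality is not ``immediate.'' With the frozen operator as in \eqref{A-equ}, a constant $K$ is a super-solution for all admissible $u$ only if $r(x)-\chi\nu\Psi(x;u)-(b-\chi\mu)K\le 0$ for every such $u$ and every $x$. Since admissible $u$ may coincide with the lower barrier, which vanishes on a half-line, $\chi\nu\Psi(x;u)$ cannot be bounded below by $\chi\mu K$ uniformly; at points where $r(x)$ is essentially $r^*$ the requirement forces $r^*\le(b-\chi\mu)K$, i.e. $K\ge\frac{r^*}{b-\chi\mu}>\frac{r^*}{b}$. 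This is why the paper works with $\frac{r^*}{b-\chi\mu}$ (Lemma \ref{super-solu-lm}), and why the endpoint value $\frac{r^*}{b}$ is recovered only at the very end through the Liouville-type Lemma \ref{lem-001-4}, not from the barrier. Relatedly, a constant upper barrier cannot yield $\phi(-\infty)=0$: near $-\infty$ your sandwich reads $0\le\phi\le{\rm const}$, and the Dirichlet datum at $-L$ is lost in the locally uniform limit $L\to\infty$. One needs a super-solution that itself decays as $x\to-\infty$, which is exactly the role of $U_1^+(x)=\min\{\frac{r^*}{b-\chi\mu},\frac{r^*}{b-\chi\mu}e^{\theta_1(x-x_1)}\}$ in \eqref{U-Def}.

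Second --- and this is the decisive gap --- your sub-solution reduction is backwards. Set $\tilde c=c-\frac{\chi\mu r^*}{2\sqrt\nu(b-\chi\mu)}$ and $G=\frac{r^*(b-2\chi\mu)}{b-\chi\mu}$. The quadratic inequality $\lambda^2+\tilde c\lambda+G\le 0$ has a real solution iff $|\tilde c|\ge 2\sqrt G$, and a \emph{positive} one iff $\tilde c\le-2\sqrt G$, which is precisely the negation of \textbf{(H1)}. Under \textbf{(H1)} (take $c=0$ and $\chi$ small, so $\tilde c\approx 0$ and $G\approx r^*>0$) the quadratic is strictly positive for all real $\lambda$, so no exponential rate is admissible and your construction has nothing to start from; the claimed equivalence with the ``drift minus $2\sqrt{\text{effective growth}}$'' inequality is simply not correct. (In addition, $\min\{\kappa,\gamma e^{\lambda x}\}$ is inadmissible as a sub-solution: the kink at the cap is concave, and minima of sub-solutions are not sub-solutions; a maximum is needed there.) The mechanism that actually closes the argument under \textbf{(H1)} is of a different kind: the paper introduces the ignition-type nonlinearity \eqref{ignition-type}, whose reflected front $\psi_\varepsilon$ carries its own speed $\tilde c_\varepsilon\to 2\sqrt G$, truncates it at its zero crossing, $U_1^-=\max\{\psi_\varepsilon,0\}$ (convex kink), and places its support entirely in the region where $r(x)\ge r^*-\varepsilon$. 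The sub-solution inequality then does not reduce to a pointwise characteristic equation at all; using the front's ODE it collapses to $(c-\chi\Psi_x+\tilde c_\varepsilon)\psi_\varepsilon'\ge 0$ (Lemma \ref{subsolution}), so the front's intrinsic speed absorbs the adverse drift, and positivity of this factor is exactly what \textbf{(H1)} asserts. Without this (or an equivalent) idea, the regime $-2\sqrt G<\tilde c\le 0$ permitted by \textbf{(H1)} is out of reach of your proposal.
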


\begin{tm}
\label{forced-wave-thm2}
Suppose that $r(x)$ is as in {\bf Case 2}, and {\bf (H2)} holds. Then there exists
a number $\chi_0=\chi_0(r(\cdot),c)>0$ such that for any $0<\chi<\chi_0$,
there is a forced wave solution $(u(t,x),v(t,x))=(\phi(x-ct),\psi(x-ct))$ connecting $(0,0)$ and $(0,0)$, that is,
$\phi(x)>0$ for all $x\in\R$ and $\phi(\pm\infty)=0$.
\end{tm}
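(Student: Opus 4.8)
The plan is to realize the forced wave as a positive entire steady state $(\phi,\psi)$ of \eqref{wave-eq}, obtained as the locally uniform limit of solutions of truncated problems on expanding intervals $(-L,L)$. The first step is to eliminate $\psi$. For given $\phi\ge 0$ the second equation of \eqref{wave-eq} is solved by convolving $\mu\phi$ with the Green's function of $-\partial_{xx}+\nu$, which gives $\psi=\psi[\phi]$ together with the elementary bounds $\|\psi\|_\infty\le\frac{\mu}{\nu}\|\phi\|_\infty$ and $\|\psi_x\|_\infty\le\frac{\mu}{2\sqrt\nu}\|\phi\|_\infty$ (and matching local Hölder bounds). Substituting $\psi_{xx}=\nu\psi-\mu\phi$ into the chemotaxis term recasts the first equation of \eqref{wave-eq} as the scalar problem
$$
\phi_{xx}+(c-\chi\psi_x)\phi_x+\big(r(x)-\chi\nu\psi\big)\phi-(b-\chi\mu)\phi^2=0,
$$
which is the form I would work with. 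Because $\chi$ will be taken small, I regard $\chi\psi_x\phi_x$ and $\chi\nu\psi\phi$ as controllable perturbations of the Fisher--KPP operator $\phi_{xx}+c\phi_x+r(x)\phi-b\phi^2$, and for a given iterate I always extend $\phi$ by zero to $\R$ before forming $\psi[\phi]$, so that no artificial boundary condition on $\psi$ is needed.

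Next I would build ordered sub- and supersolutions on $(-L,L)$. Since {\bf (H2)} gives $\lambda_\infty(r(\cdot),c)>0$, fix $L_0$ with $\lambda_{L_0}(r(\cdot),c)>0$ and let $\varphi_0>0$ be the principal eigenfunction of \eqref{ev-eq0} on $(-L_0,L_0)$. Extending $\varphi_0$ by zero, the function $\underline\phi=\delta\varphi_0$ is, for every $L>L_0$, a generalized subsolution once $\delta$ and $\chi$ are small: the strictly positive linear gain $\delta\lambda_{L_0}\varphi_0$ dominates the quadratic term $(b-\chi\mu)\delta^2\varphi_0^2$ and the $O(\chi\delta^2)$ chemotaxis corrections, while the upward kink produced by the zero-extension at $\pm L_0$ carries the correct sign. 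A constant $\overline\phi\equiv M$ with $M\ge r^*/b$ serves as a supersolution, since for the scalar equation above a constant yields $\psi_x\equiv0$ and leaves exactly $M(r(x)-bM)\le0$. With these ordered barriers, I would run a Schauder fixed-point map on the order interval $\{\underline\phi\le\phi\le\overline\phi\}$ (with Dirichlet data $\phi(\pm L)=0$): given $\phi$ in the interval, form $\psi[\phi]$ and then solve the linear elliptic equation defining the next iterate, with a large shift $K$, using elliptic regularity for compactness. The smallness of $\chi$ together with the structural bound $b\ge\frac32\chi\mu$ from {\bf (H2)} is precisely what makes this perturbation of a monotone map preserve the order interval (and keeps the solutions globally bounded via Proposition \ref{existence-uniqueness}), producing $\phi_L$ with $\delta\varphi_0\le\phi_L\le M$.

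I would then pass to the limit. As $0\le\phi_L\le M$ uniformly and all coefficients, including $\psi_L$ and $\psi_{L,x}$, are uniformly bounded, interior Schauder estimates yield uniform $C^{2,\alpha}_{\rm loc}$ bounds, so along a sequence $L_n\to\infty$ one extracts $\phi_{L_n}\to\phi$ in $C^2_{\rm loc}(\R)$, an entire solution on $\R$. Because $\phi_L\ge\delta\varphi_0$ on $(-L_0,L_0)$ for all $L>L_0$, the limit obeys $\phi\ge\delta\varphi_0>0$ there, so $\phi\not\equiv0$; writing the limiting equation as a linear elliptic equation for the nonnegative $\phi$ with bounded coefficients and applying the strong maximum principle gives $\phi>0$ on all of $\R$. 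Finally, the decay $\phi(\pm\infty)=0$ comes from $r(\pm\infty)<0$: for $|x|$ large, $r(x)$ is bounded below zero, and since $\chi$ is small the perturbed operator still admits exponentially decaying supersolutions $Ae^{-\gamma|x|}$ in each tail, so comparison forces $\phi\to0$ at $\pm\infty$, whence also $\psi(\pm\infty)=0$; this exhibits the forced wave of Theorem \ref{forced-wave-thm2}.

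The main obstacle I anticipate is the loss of comparison/monotone structure caused by the chemotaxis coupling $(\chi u v_x)_x$: one cannot simply run a monotone iteration between $\underline\phi$ and $\overline\phi$. Controlling this is exactly where the threshold $\chi_0=\chi_0(r(\cdot),c)$ enters — $\chi$ must be small relative to $L_0$, $\delta$, $\nu$, $\mu$ and the spectral gap $\lambda_{L_0}$ in order that the fixed-point map preserve the order interval and that the tail supersolutions remain valid. A secondary point is that the positive lower barrier must not evaporate as $L\to\infty$; this is secured by anchoring $\underline\phi$ to the fixed interval $(-L_0,L_0)$ where $\lambda_{L_0}>0$, and, if one prefers to avoid the boundary subtlety of the zero-extended $\delta\varphi_0$, non-degeneracy of the limit can instead be extracted by testing the equation against $\varphi_0$ and using $\lambda_{L_0}>0$ to rule out $\phi_{L_n}\to0$.
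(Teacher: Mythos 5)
Your overall architecture (eliminate $v$ via the Green's function $\Psi(\cdot;u)$, build ordered barriers, run a Schauder fixed point on an order interval, pass to the limit and read off positivity and decay) is the same as the paper's, but your lower barrier is genuinely flawed, and that is exactly the step the paper flags as nontrivial. The zero-extended eigenfunction $\underline\phi=\delta\varphi_0$ is \emph{not} a generalized subsolution of $\mathcal{A}_u(\cdot)\ge 0$ uniformly over $u$ in the order interval. First, the chemotaxis corrections are not $O(\chi\delta^2)$: in the fixed-point scheme $\Psi$ is formed from the current iterate $u$, which can be of size $M$, not from $\underline\phi$ itself, so the corrections $-\chi\Psi_x(x;u)\,\delta\varphi_0'$ and $-\chi\nu\Psi(x;u)\,\delta\varphi_0$ are only $O(\chi\delta)$. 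The zeroth-order one is harmless for small $\chi$ (it is controlled by $\varphi_0$), but the drift term is fatal: $\varphi_0'(\pm L_0)\neq 0$ while $\varphi_0(\pm L_0)=0$, so near $x=\pm L_0$ the gain $\lambda_{L_0}\delta\varphi_0(x)\to 0$ whereas $|\chi\Psi_x(x;u)\delta\varphi_0'(x)|$ stays of order $\chi\delta$, and its sign is adverse for typical $u$ in the interval (e.g.\ $u$ with most mass near the center gives $\Psi_x(L_0;u)<0$, $\varphi_0'(L_0)<0$, hence $-\chi\Psi_x\delta\varphi_0'<0$ there). Thus the differential inequality fails in a neighborhood of $\pm L_0$ for \emph{every} $\chi>0$ and $\delta>0$; no smallness rescues it, and the "upward kink" observation does not help because the failure is interior to $(-L_0,L_0)$, not at the corner. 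Consequently your map need not preserve the order interval, and since $\phi\equiv 0$ is always a fixed point, Schauder's theorem may only return the trivial solution. (A secondary slip: for the constant supersolution you need $M\ge \frac{r^*}{b-\chi\mu}$, not $M\ge \frac{r^*}{b}$; your cancellation assumes $\Psi$ is computed from the constant itself rather than from the current iterate, though this one is easily repaired since $-\chi\nu\Psi(x;u)M\le 0$.)

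The paper circumvents precisely this boundary mismatch with Lemma \ref{new-new-lm1} (a robust persistence result, quoted from \cite[Lemma 3.2]{ShXu}): for the Dirichlet logistic problem on $(-L,L)$ with \emph{arbitrary} small drift and growth perturbations $A,B$ ($\|A\|_\infty,\|B\|_\infty<\eta$) and enlarged damping $\bar b\gg b$, there is a unique positive entire solution $u^*(t,x;\bar b,A,B)$ with a uniform interior positive lower bound. Taking $A=\chi\Psi_x(\cdot;u)$, $B=\chi\nu\Psi(\cdot;u)$ — which is where the explicit threshold $\chi_0$ enters, to force $\|A\|_\infty,\|B\|_\infty<\eta$ — the barrier $U_2^-=\inf_{t,\,u}u^*(t,\cdot;\bar b,u)$ is a valid lower barrier because $u^*$ solves an equation with \emph{exactly the same} perturbations as the target equation; the subsolution property then reduces to the sign of the extra damping term $(\bar b-b)(u^*)^2>0$ (Lemma \ref{new-add-lm2}), so no boundary mismatch can occur. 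Your fallback suggestion (testing against $\varphi_0$, or rather against $e^{cx}\varphi_0$ to symmetrize the drift $c$) can indeed be developed, with the Harnack inequality, into a proof that positive solutions of the truncated problems cannot degenerate on $(-L_0,L_0)$; but as written it is only a sketch, and it presupposes the existence of positive solutions $\phi_L$ of the truncated problems — which is what your broken order-interval construction was supposed to deliver. As it stands, the proposal has a genuine gap at its central step.
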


\begin{rk}
\begin{itemize}

\item[(1)]   When $\chi=0$,   {\bf (H1)} becomes $c>-2\sqrt {r^*}$. Hence Theorem  \ref{forced-wave-thm1} recovers \cite[Theorem 1.1]{HHXZ}.

\item[(2)] In {\bf Case 1}, for any given $c>-2\sqrt{r^*}$,
there is $\chi_0=\chi_0(c)>0$ such that for any $0<\chi<\chi_0$,  {\bf (H1)} holds. Then
 by Theorem \ref{forced-wave-thm1},    for any $0<\chi<\chi_0$,  there is a forced wave solution
$(u(t,x),v(t,x))=(\phi(x-ct),\psi(x-ct))$ connecting $(\frac{r^*}{b},\frac{\mu}{\nu}\frac{r^*}{b})$ and $(0,0)$.

\item[(3)]    When $\chi=0$, Theorem  \ref{forced-wave-thm2} recovers
     \cite[Theorem 4.8]{BDNZ}.

\item[(4)] Thanks to the presence of chemotaxis, the comparison principle for parabolic equations cannot be applied directly to \eqref{Keller-Segel-eq0} or \eqref{wave-eq}, and the techniques used in the study of \eqref{fisher-kpp1} and \eqref{B-fisher-kpp1}
    are difficult to be applied to  the study of \eqref{Keller-Segel-eq0}.
We used  Schauder's fixed point theorem together with sub- and super-solutions in the study of the existence of forced wave solutions of
\eqref{Keller-Segel-eq0}. We point out that
 the construction of some appropriate sub-solutions is highly nontrivial in both cases.

 \item[(5)] The condition $b>2\chi\mu$ in ${\bf (H1)}$  or $b\ge \frac{3}{2}\chi\mu$ in ${\bf (H2)}$
 indicates that the logistic damping is large relative to the chemotaxis sensitivity. The reader is referred to \cite{TeWi2}
 for the pioneering study on the global existence of classical solutions of
  the chemotaxis models  on bounded domains with relatively large logistic damping.
\end{itemize}
\end{rk}

Observe that the conditions in Theorem \ref{forced-wave-thm1} (resp. in Theorem \ref{forced-wave-thm2}) are sufficient conditions for the existence of forced wave solutions.   To see whether \eqref{Keller-Segel-eq0} still has forced wave solutions when these sufficient conditions are not satisfied,  some numerical simulations are carried out and are described  in the following (see section 5 for detail).

In the case that $r(\cdot)$ is as in {\bf Case 1}, consider
\begin{equation}\label{wave-cut-off-eq1}
\begin{cases}
u_t= u_{xx}+cu_x- (\chi u  v_x)_x+u(r(x)-bu),\quad -L<x<L\cr
0 =v_{xx}-  \nu v +\mu u,\quad -L<x<L\cr
u(t,-L)=v(t,-L)=0\cr
{\frac{\partial u}{\partial x}(t,L)=\frac{\partial v}{\partial x}(t,L)=0.}
\end{cases}
\end{equation}
Note that, if \eqref{wave-cut-off-eq1} has a  positive stationary  solution
$(u_L(x),v_L(x))$ for all $L\gg 1$, then there is $L_k\to\infty$ such that
$\lim_{k\to\infty}(u_{L_k}(x),v_{L_k}(x))$ exists locally uniformly in $x\in\R$ and
$(u^*(x),v^*(x))=\lim_{k\to\infty}(u_{L_k}(x),v_{L_k}(x))$ is a nonnegative stationary solution of \eqref{wave-eq}.
If  $(u^*(x),v^*(x))$ is positive, we then have a positive forced wave solution of \eqref{Keller-Segel-eq0}.
We will numerically {analyze} the existence of positive stationary solutions of \eqref{wave-eq} by
looking at the long time behavior of numerical solutions of \eqref{wave-cut-off-eq1} with a given {nonnegative} initial function for sufficiently large $L$.

We observe from the numerical experiments 1-4 in section 5.1 that the assumptions in Theorem \ref{forced-wave-thm1} can be weakened. We conjecture that if $b>\chi\mu$, and $c>-2\sqrt{r^*}$, there is a forced wave solution
$(u(t,x),v(t,x))=(\phi(x-ct),\psi(x-ct))$ of \eqref{Keller-Segel-eq0} connecting $(\frac{r^*}{b},\frac{\mu}{\nu}\frac{r^*}{b})$ and $(0,0)$. If $b>\chi\mu$ and $c<-2\sqrt{r^*}$, there is no  forced wave solution
$(u(t,x),v(t,x))=(\phi(x-ct),\psi(x-ct))$ connecting $(\frac{r^*}{b},\frac{\mu}{\nu}\frac{r^*}{b})$ and $(0,0)$.

\smallskip

In the case that $r(\cdot)$ is as in {\bf Case 2},
consider the following cut-off problem of \eqref{wave-eq},
\begin{equation}\label{wave-cut-off-eq2}
\begin{cases}
u_t= u_{xx}+cu_x- (\chi u  v_x)_x+u(r(x)-bu),\quad -L<x<L\cr
0 =v_{xx}-  \nu v +\mu u,\quad -L<x<L\cr
u(t,-L)=v(t,-L)=0\cr
u(t,L)=v(t,L)=0.
\end{cases}
\end{equation}
We will numerically {analyze} the existence of positive stationary solutions of \eqref{wave-eq} by
looking at the long time behavior of numerical solutions of \eqref{wave-cut-off-eq2} with a given {nonnegative} initial function for sufficiently large $L$.
We also observe  from the numerical experiments 1-3 in section 5.2 that  $\chi$ is not necessarily small and the assumption $b\geq\frac{3}{2}\chi\mu$ can also be weakened for the existence of forced waves. We conjecture that if $b>\chi\mu$ and $\lambda_{\infty}(r(\cdot))>0$, there is a forced wave solution $(u(t,x),v(t,x))=(\phi(x-ct),\psi(x-ct))$ of \eqref{Keller-Segel-eq0} connecting $(0,0)$ and $(0,0)$. If $b>\chi\mu$ and $|c|>c^*$, there is no forced wave solution $(u(t,x),v(t,x))=(\phi(x-ct),\psi(x-ct))$ connecting $(0,0)$ and $(0,0)$.

We plan to provide some further study on the scenarios observed numerically somewhere else.

The rest of the paper is organized as follows. In section 2, we present some preliminary lemmas to be used in the proofs of the main results.
 In section 3,
we study the existence of forced wave solutions of \eqref{Keller-Segel-eq0} with $r(x)$ being as in {\bf Case 1}
and prove Theorem \ref{forced-wave-thm1}.
In section 4, we study the existence of forced wave solutions of \eqref{Keller-Segel-eq0} with $r(x)$ being as in {\bf Case 2}
and prove Theorem \ref{forced-wave-thm2}. In section 5, we present some numerical simulations.

\section{Preliminary lemmas}

In this section, we present some preliminary lemmas to be used in the proofs of the main theorems in later sections.

Note that, by the second equation in \eqref{Keller-Segel-eq0},
$v_{xx}=\nu v-\mu u.$
Hence the first equation in \eqref{Keller-Segel-eq0} can be written as
$$
u_t=u_{xx}-\chi v_{x} u_{x}+ u\big( r(x-ct)-\chi \nu v-(b-\chi\mu)u\big),\quad x\in\R.
$$
By the comparison principle for parabolic equations, if $b>\chi\mu$, then for any $u_0\in C_{\rm unif}^b (\R)$ with $u_0\ge 0$,
$$
0\le u(t,x;u_0)\le { \max}\{\|u_0\|_\infty, \frac{r^*}{b-\chi\mu}\}\quad \forall\,\ t\ge 0,\,\, x\in\R.
$$

Consider
\begin{equation}\label{Le2.6-eq1}
\begin{cases}
u_t=u_{xx}-\chi(uv_x)_x +u(r^*-bu),\quad x\in\R\cr
0=v_{xx}- \nu v+\mu u,\quad x\in\R.
\end{cases}
\end{equation}

\begin{lem}
\label{lem-001-4}
Assume that $b>2\chi\mu$.
\begin{itemize}
\item[(1)] For any $u_0\in C_{\rm unif}^b(\R)$ with $\inf_{x\in\R}u_0(x)>0$,
$$
\lim_{t\to\infty} \|u(t,\cdot;u_0)-\frac{r^*}{b}\|_\infty=0,
$$
where $(u(t,x;u_0),v(t,x;u_0))$ is the solution of \eqref{Le2.6-eq1} with $u(0,x;u_0)=u_0(x)$.

\item[(2)] If $(u^*(t,x),v^*(t,x))$ is an entire solution of \eqref{Le2.6-eq1} satisfying that
$$0<\inf_{t\in\R,x\in\R}u^*(t,x)\le \sup_{t\in\R,x\in\R}u^*(t,x)<\infty,
$$
 then
$$
(u^*(t,x),v^*(t,x))\equiv\Big (\frac{r^*}{b},\frac{\mu}{\nu}\frac{r^*}{b}\Big).
$$
\end{itemize}

\end{lem}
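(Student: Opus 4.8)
\emph{Proof proposal.} The plan is to eliminate $v$ by means of the second equation and reduce both statements to a scalar parabolic comparison combined with a two-sided iteration. Writing $v_{xx}=\nu v-\mu u$, the first equation of \eqref{Le2.6-eq1} becomes
$$
u_t=u_{xx}-\chi v_x u_x+u\big(r^*-\chi\nu v-(b-\chi\mu)u\big),
$$
while the bounded solution of $0=v_{xx}-\nu v+\mu u$ is $v(t,x)=\frac{\mu}{2\sqrt\nu}\int_\R e^{-\sqrt\nu|x-y|}u(t,y)\,dy$. Since $\frac{1}{2\sqrt\nu}\int_\R e^{-\sqrt\nu|x-y|}\,dy=\frac1\nu$, this yields the pointwise bounds $\frac{\mu}{\nu}\inf_y u(t,y)\le v(t,x)\le \frac{\mu}{\nu}\sup_y u(t,y)$, and moreover $v_x$ is bounded whenever $u$ is. Thus, along a fixed solution, $v$ and $v_x$ may be frozen as \emph{known bounded coefficients}, so that $u$ solves a scalar linear parabolic equation $u_t-u_{xx}+\chi v_x u_x=g\,u$ to which the comparison principle on $\R$ applies for bounded sub/super-solutions.

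For part (1), I would first use $v\ge0$, hence $g\le r^*-(b-\chi\mu)u$, and compare $u$ with the spatially homogeneous ODE solution of $\overline U'=\overline U(r^*-(b-\chi\mu)\overline U)$, $\overline U(0)=\|u_0\|_\infty$; writing $w=u-\overline U$, the quadratic terms assemble into $w\big[r^*-(b-\chi\mu)(u+\overline U)\big]$, a bounded zeroth-order term, so comparison gives $\limsup_{t\to\infty}\sup_x u\le \frac{r^*}{b-\chi\mu}=:\overline u_0$. Positivity of $u$ (from $\inf u_0>0$ together with a lower bound $g\ge-C$) then lets me run the reverse comparison: once $\sup_x u\le \overline u_0+\eps$, the bound $v\le\frac{\mu}{\nu}(\overline u_0+\eps)$ gives $g\ge r^*-\chi\mu(\overline u_0+\eps)-(b-\chi\mu)u$, and comparison with the corresponding ODE yields $\liminf_{t\to\infty}\inf_x u\ge \frac{r^*-\chi\mu\overline u_0}{b-\chi\mu}=:\underline u_1$, which is positive precisely because $b>2\chi\mu$. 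Alternating these two steps generates bounds satisfying
$$
\overline u_{n+1}=\frac{r^*-\chi\mu\,\underline u_n}{b-\chi\mu},\qquad \underline u_{n+1}=\frac{r^*-\chi\mu\,\overline u_n}{b-\chi\mu},
$$
so $\overline u_n-\underline u_n=\big(\tfrac{\chi\mu}{b-\chi\mu}\big)^n(\overline u_0-\underline u_1)\to0$ (a contraction, again by $b>2\chi\mu$) with common limit $\frac{r^*}{b}$. Since $\underline u_n\le\liminf\inf_x u\le\limsup\sup_x u\le\overline u_n$, the squeeze forces $\|u(t,\cdot)-\frac{r^*}{b}\|_\infty\to0$.

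For part (2), I would run the same iteration on the entire solution $(u^*,v^*)$, exploiting that it is defined for all $t\in\R$: comparing $u^*$ on $[T,t_0]$ with the ODE super/sub-solutions and letting $T\to-\infty$, the ODE solutions have run for infinite time and reach their equilibria, so the resulting bounds hold \emph{pointwise} at every $(t_0,x_0)$ rather than only asymptotically. Starting from $m_0=\inf_{t,x}u^*>0$ and $M_0=\sup_{t,x}u^*\le\frac{r^*}{b-\chi\mu}$ (the latter from the $v^*\ge0$ comparison), the same recursion produces constants $m_n\le u^*(t,x)\le M_n$ valid for all $(t,x)$ and all $n$, with $m_n,M_n\to\frac{r^*}{b}$; letting $n\to\infty$ gives $u^*\equiv\frac{r^*}{b}$, and then the elliptic equation yields $v^*\equiv\frac{\mu}{\nu}\frac{r^*}{b}$.

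The main obstacle is that the comparison principle is \emph{not} directly available for the coupled chemotaxis system; the resolution is the reduction above, which freezes $v$ and $v_x$ as bounded coefficients so that each half-step becomes a genuine scalar parabolic comparison on $\R$. Care is needed to verify that the quadratic nonlinearity collapses into a bounded potential term $h(t,x)w$ and that the a priori positive lower bound on $u$ is available to start the sub-solution comparison. The hypothesis $b>2\chi\mu$ enters twice and essentially: it makes the first lower barrier $\underline u_1$ strictly positive, and it makes the iteration map a contraction with factor $\frac{\chi\mu}{b-\chi\mu}<1$, whose unique fixed point is $\frac{r^*}{b}$.
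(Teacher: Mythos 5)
Your proposal is correct, and its core mechanism is the same as the paper's: rewrite the first equation of \eqref{Le2.6-eq1} as $u_t=u_{xx}-\chi v_x u_x+u\big(r^*-\chi\nu v-(b-\chi\mu)u\big)$, treat $v$ and $v_x$ as frozen bounded coefficients so scalar parabolic comparison applies, bound $\chi\nu v$ between $\chi\mu\inf u$ and $\chi\mu\sup u$ via the kernel representation, and compare with spatially homogeneous logistic ODEs. The differences lie in the finishing moves. For part (2) the paper performs exactly one round of your two comparisons --- obtaining $\overline u\le\frac{r^*-\chi\mu\underline u}{b-\chi\mu}$ and $\underline u\ge\frac{r^*-\chi\mu\overline u}{b-\chi\mu}$ with $\underline u=\inf u^*$, $\overline u=\sup u^*$, by the same device of starting the ODE at time $-n$ and letting $n\to\infty$ --- and then concludes in one line by subtraction: $(b-2\chi\mu)\overline u\le(b-2\chi\mu)\underline u$, hence $\overline u=\underline u$. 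Your infinite contraction iteration reaches the same conclusion with somewhat more machinery; the paper's algebraic step is the shorter way to close, though both hinge on $b>2\chi\mu$ in the same way. For part (1) the paper simply cites \cite[Theorem 1.8]{SaSh1}, whereas your alternating squeeze gives a self-contained proof; that scheme is sound, the two points needing care being exactly the ones you flag: $\inf_x u(t,\cdot)>0$ at every finite time (from the bounded zeroth-order coefficient) so the sub-ODE comparison can be started, and positivity of $r^*-\chi\mu\frac{r^*}{b-\chi\mu}$, both guaranteed by $b>2\chi\mu$. (One cosmetic slip: your displayed geometric decay should compare $\overline u_n-\underline u_n$ with the gap at the same index, but this does not affect the convergence of both sequences to $\frac{r^*}{b}$.)
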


\begin{proof}
(1) It follows from \cite[Theorem 1.8]{SaSh1}.

(2) { It follows from the arguments in \cite[Theorem 1.8]{SaSh1}.
For completeness, we provide a proof in the following.

First, let
$$
\underline{u}=\inf_{t\in\R,x\in\R}u^*(t,x),\quad \overline{u}=\sup_{t\in\R,x\in\R}u^*(t,x).
$$
Then
$$
0<\underline{u}\le u^*(t,x)\le \overline{u}<\infty\quad \forall\, t\in\R,\,\, x\in\R
$$
and
$$
0<\frac{\mu \underline{u}}{\nu}\le v^*(t,x)\le \frac{\mu \overline{u}}{\nu}<\infty\quad \forall\, t\in\R,\,\, x\in\R.
$$
Note that if $\underline{u}=\overline{u}$, then we must have $(u^*(t,x),v^*(t,x))\equiv \Big (\frac{r^*}{b},\frac{\mu}{\nu}\frac{r^*}{b}\Big)$. It then suffices to prove $\underline{u}=\overline{u}$.

Next, let  ${w}(t)$ be the solution of
$$
\begin{cases}
w_t=w(r^*-(b-\chi\mu)w)\cr
w(0)=\overline{u}.
\end{cases}
$$
Then
$$
\lim_{n\to\infty}{w}(t+n)=\frac{r^*}{b-\chi\mu}\quad\forall\, t\in\R.
$$
Note that
\begin{align}
\label{revise-eq0}
u_t&=u_{xx}-\chi(uv_x)_x +u(r^*-bu)\nonumber\\
&=u_{xx}-\chi v_x u_x+ u(r^*-\chi\nu v-(b-\chi\mu)u),\quad x\in\R.
\end{align}
 By the comparison principle for parabolic equations, we have
$$
 u^*(t,x)=u(t+n,x;u^*(-n,\cdot))\le {w}(t+n)\quad \forall\, t\ge -n,\,\, x\in\R,\,\, n\in \N.
$$
It then follows that
$$
0<\underline{u}\le \overline{u}\le \frac{r^*}{b-\chi\mu}<\frac{r^*}{\chi\mu}.
$$

Now,  we may suppose that $0<r^*-\chi\mu(\overline{u}+\varepsilon) <r^*-\chi\mu(\underline{u}-\varepsilon)$ for $\varepsilon$ very small.
Let $\overline{w}(t)$ denote the solution of the initial value problem
\begin{equation*}\label{supSol}
\begin{cases}
\overline{w}_{t}=\overline{w}\left[r^*-\chi\mu(\underline{u}-\varepsilon) -(b-\chi\mu)\overline{w} \right] \\
\overline{w}(0)=\overline{u}.
\end{cases}
\end{equation*}
By the comparison principle for parabolic equations again, we have that
\begin{equation}\label{A10}
u^*(t,x)=u(t+n,x;u^*(-n,\cdot))\leq \overline{w}(t+n) \quad \forall \, t\ge -n ,\,\, x\in\R,\,\, n\in\N.
\end{equation}
Note that
\begin{equation*}
\overline{w}(t+n) \to \frac{r^*-\chi\mu(\underline{u}-\varepsilon)}{b-\chi\mu} \ \ \text{as} \ \ n\rightarrow \infty.
\end{equation*}
Combining this with inequality \eqref{A10}, we obtain that
\begin{equation*}
\overline{u}\leq \frac{r^*-\chi\mu(\underline{u}-\varepsilon)}{b-\chi\mu}\ \ \ \forall\ \varepsilon>0.
\end{equation*}
By letting $\varepsilon\rightarrow 0,$ we obtain
\begin{equation}
\label{revise-eq1}
\overline{u}\leq \frac{r^*-\chi\mu\underline{u}}{b-\chi\mu}.
\end{equation}

Similarly, let $\underline{w}(t)$ be solution of
\begin{equation*}\label{subSol}
\begin{cases}
\underline{w}_{t}=\underline{w}\left[r^*-\chi\mu(\overline{u}+\varepsilon) -(b-\chi\mu)\underline{w} \right] \\
\underline{w}(0)=\underline{u}.
\end{cases}
\end{equation*}
By  the  comparison principle for parabolic equations, we have
\begin{equation*}
u^*(t,x)=u(t+n,x;u^*(-n,\cdot))\geq \underline{w}(t+n) \quad \forall \, t\ge -n,\, x\in\R, \, n\in \N.
\end{equation*}
Same arguments as in above yield that
\[
\underline{u}\geq  \frac{r^*-\chi\mu(\overline{u}+\varepsilon)}{b-\chi\mu}\ \ \ \forall\ \varepsilon>0.
\]
By letting $\varepsilon\rightarrow 0,$  we have
\begin{equation}
\label{revise-eq2}
\underline{u}\geq  \frac{r^*-\chi\mu \overline{u}}{b-\chi\mu}.
\end{equation}

By \eqref{revise-eq1} and \eqref{revise-eq2}, we have
\begin{align*}
(b-2\chi\mu)\overline{u}&= (b-\chi\mu)\overline{u}-\chi\mu\overline{u}=(b-\chi\mu)\overline{u} +\left(r^* -\chi\mu\overline{u}\right) -r^*\nonumber\\
 &\leq  r^* -\chi\mu\underline{u} +(b-\chi\mu)\underline{u}  -r^*= (b-2\chi\mu)\underline{u}.
\end{align*}
Combining this with the fact that $\underline{u}\leq \overline{u}$ and $ (b-2\chi\mu)>0$ we obtain that
\begin{equation*}
\underline{u}= \overline{u}.
\end{equation*}
The lemma is thus proved.}
\end{proof}

{For every $u\in C^b_{\rm unif}(\R)$, let
\begin{equation}\label{psi-definition}
\Psi(x;u)=\mu\int_{0}^{\infty}\int_{\R}\frac{e^{-\nu s}e^{-\frac{|y-x|^2}{4s}}}{\sqrt{4\pi s}}u(y)dyds.
\end{equation}
It is well known that $\Psi(x;u)\in C^2_{\rm unif}(\R)$ and solves the elliptic equation
$$
\frac{d^2}{dx^2}\Psi(x;u)-\nu\Psi(x;u)+\mu u=0.
$$

\begin{lem}
\label{new-lm1}
\begin{equation}\label{psi-definition-eq2}
\Psi(x;u)=\frac{\mu}{2\sqrt{\nu}}\int_{\R}e^{-\sqrt{\nu}|x-y|}u(y)dy
\end{equation}
and
\begin{equation}\label{space-derivative-of-psi-1}
\frac{d}{dx}\Psi(x;u)=-\frac{\mu}{2}e^{-\sqrt{\nu }x}\int_{-\infty}^xe^{\sqrt{\nu}y}u(y)dy + \frac{\mu}{2}e^{\sqrt{\nu}x}\int_{x}^{\infty}e^{-\sqrt{\nu}y}u(y)dy.
\end{equation}
\end{lem}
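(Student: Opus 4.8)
The plan is to prove \eqref{psi-definition-eq2} first, and then obtain \eqref{space-derivative-of-psi-1} by splitting the resulting integral at $y=x$ and differentiating. For \eqref{psi-definition-eq2} I would start from the definition \eqref{psi-definition} and interchange the order of the $s$-integration and the $y$-integration by Fubini's theorem; since the integrand is nonnegative whenever $u\ge 0$ (and for general bounded $u$ one splits $u$ into its positive and negative parts, the $s,y$-integral of the Gaussian kernel against a bounded function being absolutely convergent), this is justified. After the interchange one is left with $\Psi(x;u)=\mu\int_{\R}u(y)\,I(x-y)\,dy$, where $I(x-y)=\int_0^\infty \frac{e^{-\nu s}e^{-|x-y|^2/(4s)}}{\sqrt{4\pi s}}\,ds$ is the Laplace transform in the parameter $\nu$ of the one-dimensional heat kernel.

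The key computation is the evaluation of $I(x-y)$. Writing $a=\nu$ and $b=|x-y|^2/4$, I would invoke the classical formula $\int_0^\infty s^{-1/2}e^{-as-b/s}\,ds=\sqrt{\pi/a}\,e^{-2\sqrt{ab}}$, valid for $a,b>0$; since $2\sqrt{ab}=\sqrt{\nu}\,|x-y|$, this gives $I(x-y)=\frac{1}{\sqrt{4\pi}}\sqrt{\pi/\nu}\,e^{-\sqrt{\nu}|x-y|}=\frac{1}{2\sqrt{\nu}}e^{-\sqrt{\nu}|x-y|}$. Substituting back yields \eqref{psi-definition-eq2} at once. This evaluation of the $s$-integral, a standard but not entirely trivial Laplace-type integral, together with the Fubini justification, is the only real analytic content of the lemma; everything else is bookkeeping.

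For \eqref{space-derivative-of-psi-1} I would split the integral in \eqref{psi-definition-eq2} at $y=x$, using $|x-y|=x-y$ on $\{y<x\}$ and $|x-y|=y-x$ on $\{y>x\}$, and factor out the exponentials that do not depend on $y$, obtaining $\Psi(x;u)=\frac{\mu}{2\sqrt{\nu}}e^{-\sqrt{\nu}x}\int_{-\infty}^x e^{\sqrt{\nu}y}u(y)\,dy+\frac{\mu}{2\sqrt{\nu}}e^{\sqrt{\nu}x}\int_x^\infty e^{-\sqrt{\nu}y}u(y)\,dy$. Differentiating each term by the product rule and the fundamental theorem of calculus (legitimate since $u$ is continuous and bounded, so the improper integrals converge and the variable-endpoint terms are classical), the two boundary contributions $\pm\frac{\mu}{2}u(x)$ cancel, leaving exactly \eqref{space-derivative-of-psi-1}. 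As a sanity check one may differentiate once more and verify that the surviving boundary terms reproduce $-\mu u$, so that $\Psi_{xx}-\nu\Psi+\mu u=0$, consistent with the elliptic equation recalled before the lemma.

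Finally, I note an alternative route that sidesteps the $s$-integral altogether: since the right-hand side of \eqref{psi-definition-eq2} is manifestly bounded and, by the two differentiations above, solves $w_{xx}-\nu w+\mu u=0$, while $\Psi(\cdot;u)$ is the bounded $C^2_{\rm unif}$ solution of the same equation recalled in the excerpt, the two must coincide because the homogeneous equation $w_{xx}-\nu w=0$ has no nonzero bounded solution on $\R$ (its solution space is spanned by $e^{\pm\sqrt{\nu}x}$). I would present the direct computation as the main argument, as it is self-contained, and mention the uniqueness argument only as a shortcut.
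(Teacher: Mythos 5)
Your proof is correct and complete, and it is worth noting that the paper does not actually prove this lemma internally: its entire proof is the citation ``The lemma is proved in \cite[Lemma 2.1]{SaShXu}'', so your argument supplies precisely the content that the paper outsources. The two substantive points are handled properly. First, the Fubini interchange is legitimate because the heat kernel has unit mass in $y$, so the double integral of the kernel against a bounded $u$ is dominated by $\|u\|_{\infty}\int_0^\infty e^{-\nu s}\,ds=\|u\|_{\infty}/\nu<\infty$. Second, the Laplace-type evaluation $\int_0^\infty s^{-1/2}e^{-as-b/s}\,ds=\sqrt{\pi/a}\,e^{-2\sqrt{ab}}$ with $a=\nu$, $b=|x-y|^2/4$ gives the kernel $\frac{1}{2\sqrt{\nu}}e^{-\sqrt{\nu}|x-y|}$ exactly as you state (the identity follows, e.g., from the substitution $s=t^2$ and the classical Gaussian-type integral), and it degenerates correctly at $y=x$. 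The differentiation step is also fine; one small slip is that the cancelling boundary terms are $\pm\frac{\mu}{2\sqrt{\nu}}u(x)$ rather than $\pm\frac{\mu}{2}u(x)$, since the prefactor $\frac{\mu}{2\sqrt{\nu}}$ multiplies the endpoint evaluation $e^{\mp\sqrt{\nu}x}e^{\pm\sqrt{\nu}x}u(x)=u(x)$; as they cancel in either case, the conclusion \eqref{space-derivative-of-psi-1} is unaffected. Your alternative route --- observing that both sides of \eqref{psi-definition-eq2} are bounded solutions of $w_{xx}-\nu w+\mu u=0$ and that the homogeneous equation $w_{xx}-\nu w=0$ has no nontrivial bounded solution on $\R$ --- is also valid and arguably cleaner, since it avoids the explicit $s$-integral entirely; presenting the direct computation as the main argument and the uniqueness observation as a remark, as you propose, is a sensible arrangement.
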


\begin{proof}
The lemma is proved in \cite[Lemma 2.1]{SaShXu}.
\end{proof}

\begin{lem}\label{V-estimat}
Suppose that $b>\chi\mu$. For every $u\in C^b_{\rm unif}(\R)$, $0\leq u(x)\leq \frac{r^*}{b-\chi\mu}$, it holds that
$$
\Psi(x;u)\leq \frac{\mu r^*}{\nu(b-\chi\mu)}
\quad {\rm and}\quad
\frac{d}{dx}\Psi(x;u)\leq \frac{\mu r^*}{2\sqrt{\nu}(b-\chi\mu)}
$$
\end{lem}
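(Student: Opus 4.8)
The plan is to prove both estimates by plugging the nonnegativity bound $0\le u(x)\le \frac{r^*}{b-\chi\mu}$ into the explicit integral representations furnished by Lemma \ref{new-lm1}; no parabolic machinery or comparison principle is needed, only the two formulas \eqref{psi-definition-eq2} and \eqref{space-derivative-of-psi-1} and elementary integration. Since $u\ge 0$ and $e^{-\sqrt{\nu}|x-y|}\ge 0$, I can estimate the integrand pointwise by replacing $u(y)$ with its supremum $\frac{r^*}{b-\chi\mu}$ and then reduce everything to one-dimensional exponential integrals whose values are explicit.

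For the first inequality, I would start from \eqref{psi-definition-eq2} and write
$$
\Psi(x;u)=\frac{\mu}{2\sqrt{\nu}}\int_{\R}e^{-\sqrt{\nu}|x-y|}u(y)\,dy
\le \frac{\mu}{2\sqrt{\nu}}\cdot\frac{r^*}{b-\chi\mu}\int_{\R}e^{-\sqrt{\nu}|x-y|}\,dy.
$$
The remaining integral is translation invariant, and the substitution $z=y-x$ gives $\int_{\R}e^{-\sqrt{\nu}|z|}\,dz=2\int_0^\infty e^{-\sqrt{\nu}z}\,dz=\frac{2}{\sqrt{\nu}}$. Substituting this value yields exactly $\Psi(x;u)\le \frac{\mu r^*}{\nu(b-\chi\mu)}$, so the first bound follows immediately.

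For the derivative estimate I would use \eqref{space-derivative-of-psi-1}, which expresses $\frac{d}{dx}\Psi(x;u)$ as the sum of a nonpositive term (the $-\frac{\mu}{2}e^{-\sqrt{\nu}x}\int_{-\infty}^x e^{\sqrt{\nu}y}u(y)\,dy$ piece, nonpositive because $u\ge 0$) and a nonnegative term. The key observation — and the only point requiring a moment's thought, since the claim is a one-sided upper bound rather than an absolute-value bound — is that I may simply discard the nonpositive first term to obtain
$$
\frac{d}{dx}\Psi(x;u)\le \frac{\mu}{2}e^{\sqrt{\nu}x}\int_{x}^{\infty}e^{-\sqrt{\nu}y}u(y)\,dy
\le \frac{\mu}{2}\cdot\frac{r^*}{b-\chi\mu}\,e^{\sqrt{\nu}x}\int_{x}^{\infty}e^{-\sqrt{\nu}y}\,dy.
$$
Since $\int_{x}^{\infty}e^{-\sqrt{\nu}y}\,dy=\frac{1}{\sqrt{\nu}}e^{-\sqrt{\nu}x}$, the factors $e^{\sqrt{\nu}x}$ and $e^{-\sqrt{\nu}x}$ cancel and the right-hand side collapses to $\frac{\mu r^*}{2\sqrt{\nu}(b-\chi\mu)}$, which is the desired second bound.

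There is no genuine analytic obstacle here; the argument is a direct computation once Lemma \ref{new-lm1} is in hand. If anything, the only subtlety worth flagging is ensuring the right quantity is being bounded: the derivative bound is one-sided (an upper bound for $\frac{d}{dx}\Psi$, not for $\big|\frac{d}{dx}\Psi\big|$), and it is precisely the sign structure of \eqref{space-derivative-of-psi-1} together with $u\ge 0$ that makes this one-sided bound clean. I would also note in passing that the hypothesis $b>\chi\mu$ enters only through the finiteness and positivity of the constant $\frac{r^*}{b-\chi\mu}$, so it plays no structural role beyond guaranteeing the stated upper bound on $u$ is meaningful.
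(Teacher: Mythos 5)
Your proof is correct and is precisely the ``direct calculation'' that the paper invokes without writing out: both bounds follow by inserting $0\le u\le \frac{r^*}{b-\chi\mu}$ into the explicit formulas of Lemma \ref{new-lm1}, discarding the nonpositive term in \eqref{space-derivative-of-psi-1} for the one-sided derivative bound. Your computations of the exponential integrals and the resulting constants are accurate, so nothing further is needed.
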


\begin{proof}
It follows from a direct calculation.
\end{proof}

\section{Forced wave solutions in {\bf Case 1}}

In this section, we study the existence of forced wave solutions of \eqref{Keller-Segel-eq0} with $r(x)$ being as in {\bf Case 1}
and prove Theorem \ref{forced-wave-thm1}.  Throughout this section, we assume that
$r(x)$ is as in {\bf Case 1}.

\smallskip

We first present some lemmas.
Suppose that $b>\chi\mu$. Fix $r_1$ with $r(-\infty)<r_1<0$.
Let $x_1$ be given satisfying that $r(x)\leq r_1$ for any $x\leq x_1$.
Let $\theta_1$ be the positive root of the equation $\theta^2+c\theta+r_1=0$.
Define
\begin{equation}\label{U-Def}
U_1^{+}(x)=\min\{\frac{r^*}{b-\chi\mu}, \frac{r^*}{b-\chi\mu}e^{\theta_1(x-x_1)}\},
\end{equation}
and consider the set
\begin{equation}\label{E-set}
\mathcal{E}_1^+=\{u\in C^b_{\rm unif}(\R)\ :\ 0\leq u(x)\leq U_1^{+}(x), \ \forall\ x\in\R\}.
\end{equation}

For every $u\in\mathcal{E}_1^+$, consider the operator
\begin{equation}\label{A-equ}
\mathcal{A}_{u}(U)(x)=U_{xx}(x)+(c-\chi\Psi_x(x;u))U_{x}(x)+(r(x)-\chi\nu\Psi(x;u)-(b-\chi\mu)U(x))U(x),
\end{equation}
where $\Psi(x;u)$ is given by \eqref{psi-definition}.

\begin{lem}
\label{super-solu-lm}
Suppose that {$b\geq \frac{3}{2}\chi\mu$}. For every $u\in \mathcal{E}_1^+$, it holds that $\mathcal{A}_{u}(\frac{r^*}{b-\chi\mu})(x)\leq 0$ for $x\in\R$ and
$\mathcal{A}_{u}(\frac{r^*}{b-\chi\mu}e^{\theta_1(\cdot-x_1)})(x)\leq 0$ for $x\in (-\infty,x_1)$.
\end{lem}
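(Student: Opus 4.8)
The plan is to verify both inequalities by directly substituting the two candidate functions into $\mathcal{A}_u$, the first being a one-line computation and the second requiring a decay estimate on $\Psi_x$. For the constant $\frac{r^*}{b-\chi\mu}$ the first- and second-order derivative terms in $\mathcal{A}_u$ vanish, so
\[
\mathcal{A}_u\Big(\frac{r^*}{b-\chi\mu}\Big)(x)=\Big(r(x)-\chi\nu\Psi(x;u)-r^*\Big)\frac{r^*}{b-\chi\mu}.
\]
Since $r(x)\le r^*$, $\Psi(x;u)\ge 0$ (because $u\ge 0$), and $\frac{r^*}{b-\chi\mu}>0$, the right-hand side is $\le 0$ for all $x\in\R$; note that only $b>\chi\mu$ is needed here.

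For the exponential part I would set $U(x)=\frac{r^*}{b-\chi\mu}e^{\theta_1(x-x_1)}$, so that $U_x=\theta_1 U$ and $U_{xx}=\theta_1^2 U$. Substituting and factoring out $U>0$ gives
\[
\mathcal{A}_u(U)(x)=U\Big[\theta_1^2+c\theta_1+r(x)-\chi\theta_1\Psi_x(x;u)-\chi\nu\Psi(x;u)-(b-\chi\mu)U\Big].
\]
Using the defining relation $\theta_1^2+c\theta_1=-r_1$ together with $(b-\chi\mu)U=r^*e^{\theta_1(x-x_1)}$, the bracket becomes
\[
\big(r(x)-r_1\big)-\chi\theta_1\Psi_x(x;u)-\chi\nu\Psi(x;u)-r^*e^{\theta_1(x-x_1)}.
\]
For $x<x_1$ we have $r(x)\le r_1$, and $\Psi(x;u)\ge 0$, so three of the four terms are already $\le 0$; the only possibly positive contribution is $-\chi\theta_1\Psi_x(x;u)$.

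To control this remaining term I would bound $-\Psi_x(x;u)$ from above using the representation \eqref{space-derivative-of-psi-1}: discarding its nonnegative second integral and inserting $u(y)\le \frac{r^*}{b-\chi\mu}e^{\theta_1(y-x_1)}$, which holds for $u\in\mathcal{E}_1^+$ since $U_1^{+}$ is a minimum, yields
\[
-\Psi_x(x;u)\le \frac{\mu}{2}e^{-\sqrt{\nu} x}\int_{-\infty}^x e^{\sqrt{\nu} y}\frac{r^*}{b-\chi\mu}e^{\theta_1(y-x_1)}\,dy=\frac{\mu r^*}{2(b-\chi\mu)(\sqrt{\nu}+\theta_1)}e^{\theta_1(x-x_1)}.
\]
Hence $-\chi\theta_1\Psi_x(x;u)\le \frac{\chi\mu\,\theta_1}{2(b-\chi\mu)(\sqrt{\nu}+\theta_1)}\,r^*e^{\theta_1(x-x_1)}$, and since $\frac{\theta_1}{\sqrt{\nu}+\theta_1}<1$ the prefactor is dominated by $\frac{\chi\mu}{2(b-\chi\mu)}$, which is $\le 1$ exactly when $b\ge\frac{3}{2}\chi\mu$. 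This gives $-\chi\theta_1\Psi_x(x;u)\le r^*e^{\theta_1(x-x_1)}$, which cancels the last term of the bracket and forces the bracket $\le 0$, so $\mathcal{A}_u(U)(x)\le 0$ on $(-\infty,x_1)$.

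The step I expect to be the genuine obstacle is precisely the chemotaxis-generated term $-\chi\theta_1\Psi_x$: unlike the reaction terms, $\Psi_x$ need not have a favorable sign, so it cannot simply be thrown away. The key observation making the argument work is that its negative part decays at the same exponential rate $e^{\theta_1(x-x_1)}$ as $U$ itself, with a constant that is forced below $1$ precisely by the hypothesis $b\ge\frac{3}{2}\chi\mu$; this is where that assumption is used and where the construction of the supersolution becomes delicate.
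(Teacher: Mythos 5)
Your proof is correct and follows essentially the same route as the paper's: both verify the constant part trivially, and both control the chemotaxis term via the explicit representation of $\Psi_x$ from Lemma \ref{new-lm1}, the domination $u(y)\le \frac{r^*}{b-\chi\mu}e^{\theta_1(y-x_1)}$, the same exponential integral, and the hypothesis $b\ge\frac{3}{2}\chi\mu$ in the form $\frac{\chi\mu}{2(b-\chi\mu)}\le 1$. The only (cosmetic) difference is that you discard $-\chi\nu\Psi\le 0$ outright and bound $\frac{\theta_1}{\theta_1+\sqrt{\nu}}<1$, whereas the paper merges the $\Psi$ and $\Psi_x$ terms to get the coefficient $\theta_1-\sqrt{\nu}$ and therefore splits into the cases $\theta_1\le\sqrt{\nu}$ and $\theta_1>\sqrt{\nu}$; your variant avoids that case distinction.
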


\begin{proof}
Let $u\in \mathcal{E}_1^+$ be given. First, we have
\begin{align}
\mathcal{A}_{u}(\frac{r^*}{b-\chi\mu})(x)&=\frac{r^*}{b-\chi\mu}(r(x)-\chi\nu\Psi(x;u)-(b-\chi\mu)\frac{r^*}{b-\chi\mu})\cr
&=\frac{r^*}{b-\chi\mu}(r(x)-\chi\nu\Psi(x;u)-r^*)\cr
&\leq 0\quad \forall \, x\in\R.
\end{align}

Next, for  $x\in (-\infty, x_1)$, we have $r(x)\leq r_1$, and hence
\begin{align}\label{sup-eq0}
&\mathcal{A}_{u}(\frac{r^*}{b-\chi\mu}e^{\theta_1(\cdot-x_1)})(x)\cr &=\theta_1^2\frac{r^*}{b-\chi\mu}e^{\theta_1(x-x_1)}+(c-\chi\Psi_x(x;u))\theta_1\frac{r^*}{b-\chi\mu}e^{\theta_1(x-x_1)}\cr
&+\frac{r^*}{b-\chi\mu}e^{\theta_1(x-x_1)}(r(x)-\chi\nu\Psi(x;u)-(b-\chi\mu)\frac{r^*}{b-\chi\mu}e^{\theta_1(x-x_1)})
\cr
&=\frac{r^*}{b-\chi\mu}e^{\theta_1(x-x_1)}\left(\theta_1^2+c\theta_1+r(x)-\chi\theta_1\Psi_x(x;u)-\chi\nu\Psi(x;u)-r^*e^{\theta_1(x-x_1)}\right)\cr
&\leq \frac{r^*}{b-\chi\mu}e^{\theta_1(x-x_1)}\left(\theta_1^2+c\theta_1+r_1-\chi\theta_1\Psi_x(x;u)-\chi\nu\Psi(x;u)-r^*e^{\theta_1(x-x_1)}\right)\cr
&=\frac{r^*}{b-\chi\mu}e^{\theta_1(x-x_1)}\left(-\chi\theta_1\Psi_x(x;u)-\chi\nu\Psi(x;u)-r^*e^{\theta_1(x-x_1)}\right).
\end{align}
It then follows from Lemma \ref{new-lm1} and \eqref{sup-eq0} that
\begin{align*}
\mathcal{A}_u(\frac{r^*}{b-\chi\mu} e^{\theta_1(\cdot-x_1)})(x)
\leq\frac{r^*}{b-\chi\mu}e^{\theta_1(x-x_1)}\left(\frac{\chi\mu}{2}(\theta_1-\sqrt{\nu})e^{-\sqrt{\nu} x}\int_{-\infty}^xe^{\sqrt{\nu}y}u(y)dy
-r^*e^{\theta_1(x-x_1)} \right).
\end{align*}
{If $\theta_1\le \sqrt \nu$, we then have
$$
\mathcal{A}_u(\frac{r^*}{b-\chi\mu} e^{\theta_1(\cdot-x_1)})(x)\le 0\quad \forall\, x\in (-\infty,x_1).
$$
If $\theta_1>\sqrt \nu$, we then have}
\begin{align*}
&\mathcal{A}_u(\frac{r^*}{b-\chi\mu} e^{\theta_1(\cdot-x_1)})(x)\cr
&\leq \frac{r^*}{b-\chi\mu}e^{\theta_1(x-x_1)}\left(\frac{\chi\mu r^*}{2(b-\chi\mu) }(\theta_1-\sqrt{\nu})e^{-\sqrt{\nu} x}\int_{-\infty}^xe^{\sqrt{\nu}y}e^{\theta_1(y-x_1)}dy
-r^*e^{\theta_1(x-x_1)} \right)\cr
&=\frac{r^*{^2}}{b-\chi\mu}e^{2\theta_1(x-x_1)}\left(\frac{\chi\mu (\theta_1-\sqrt{\nu}) }{2(b-\chi\mu)(\theta_1+\sqrt{\nu}) }
-1 \right) \,\  \cr
&\leq 0\quad \forall\,  x<x_1.
\end{align*}
The lemma thus follows.
\end{proof}

Suppose $b>2\chi\mu$. For any $0<\varepsilon\ll 1$, define an ignition nonlinearity by
\begin{equation}\label{ignition-type}
f_{\varepsilon}(u)=\begin{cases}
u\left(r^*-\varepsilon -\frac{\chi\mu r^*}{b-\chi\mu}-(b-\chi\mu)u\right), \,\, &{\rm if} \,\, u\geq 0,\cr
0                     \,\, &{\rm if} -\varepsilon\leq u<0.
\end{cases}
\end{equation}
Consider the equation
\begin{equation}\label{ignition-eq}
u_t=u_{xx}+f_{\varepsilon}(u),\quad x\in \R.
\end{equation}
Equation \eqref{ignition-eq} has a decreasing traveling wave solution $\phi_{\varepsilon}(x-\tilde c_{\varepsilon}t)$ connecting {{$\frac{(r^*-\varepsilon)  (b-\chi\mu)-\chi\mu r^*}{(b-\chi\mu)^2}$ }}
and $-\varepsilon$ with speed $0<\tilde c_{\varepsilon}<2\sqrt{\frac{r^*(b-2\chi\mu)}{b-\chi\mu}}$ and $\lim_{\varepsilon\to 0^+} \tilde c_{\varepsilon}=2\sqrt{\frac{r^*(b-2\chi\mu)}{b-\chi\mu}}$ (see \cite{BNS}), that is, $(\phi_{\varepsilon},\tilde c_{\varepsilon})$ satisfies
\begin{equation}\label{ignit-TV0}
\begin{cases}
-\tilde c_{\varepsilon}\phi_{\varepsilon}^{'}=\phi_{\varepsilon}^{''}+f_{\varepsilon}(\phi_{\varepsilon}),\cr
\phi_{\varepsilon}(-\infty)={
\frac{(r^*-\varepsilon)(b-\chi\mu)-\chi\mu r^*}{(b-\chi\mu)^2} } , \,\, \phi_{\varepsilon}(\infty)=-\varepsilon, \,\, \phi_{\varepsilon}^{'}<0.
\end{cases}
\end{equation}
Let $\psi_{\varepsilon}(x)=\phi_{\varepsilon}(-x)$ for any $x\in\R$. It then follows from \eqref{ignit-TV0} that
\begin{equation}\label{ignit-TV}
\begin{cases}
\tilde c_{\varepsilon}\psi_{\varepsilon}^{'}=\psi_{\varepsilon}^{''}+f_{\varepsilon}(\psi_{\varepsilon}),\cr
\psi_{\varepsilon}(\infty)={
\frac{(r^*-\varepsilon)(b-\chi\mu)-\chi\mu r^*}{(b-\chi\mu)^2} }, \,\, \psi_{\varepsilon}(-\infty)=-\varepsilon, \,\, \psi_{\varepsilon}^{'}>0.
\end{cases}
\end{equation}

Without loss of generality, we can assume that $\psi_{\varepsilon}(x_0)=0$, $r(x)\ge r^*-\varepsilon$ if $x> x_0$, and $x_0>x_1$.
This can be realized by some appropriate translation of $\psi_{\varepsilon}(x)$ if necessary.

\begin{lem}\label{subsolution}
Suppose that {\bf (H1)} holds. For every  $u\in \mathcal{E}_1^+$ and $0<\varepsilon\ll 1$, $U_1^{-}(x)=\max\{\psi_{\varepsilon}(x),0\}$ satisfies that
$\mathcal{A}_u(U_1^{-}(\cdot))(x)\geq 0$ for any $x\not=x_0$.
Moreover, $U_1^{-}(x)<U_1^{+}(x)$ for all $x\in\R$.
\end{lem}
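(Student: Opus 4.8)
The plan is to exploit the fact that $\psi_\varepsilon$ solves the ignition traveling-wave problem \eqref{ignit-TV} and to split the line at the point $x_0$ where $\psi_\varepsilon(x_0)=0$. Since $\psi_\varepsilon$ is increasing with $\psi_\varepsilon(-\infty)=-\varepsilon<0<\psi_\varepsilon(\infty)$, we have $U_1^-(x)=0$ on $\{x<x_0\}$ and $U_1^-(x)=\psi_\varepsilon(x)>0$ on $\{x>x_0\}$. On $\{x<x_0\}$ the function $U_1^-$ vanishes identically, so $\mathcal{A}_u(U_1^-)(x)=0\ge 0$ trivially, and all the real work is on $\{x>x_0\}$.

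First I would compute $\mathcal{A}_u(\psi_\varepsilon)$ on $\{x>x_0\}$ and use \eqref{ignit-TV} to remove the second derivative through $\psi_\varepsilon''=\tilde c_\varepsilon\psi_\varepsilon'-f_\varepsilon(\psi_\varepsilon)$. Substituting into the definition \eqref{A-equ} of $\mathcal{A}_u$ and expanding $f_\varepsilon(\psi_\varepsilon)=\psi_\varepsilon\big(r^*-\varepsilon-\tfrac{\chi\mu r^*}{b-\chi\mu}-(b-\chi\mu)\psi_\varepsilon\big)$ (valid since $\psi_\varepsilon>0$ here), the quadratic $(b-\chi\mu)\psi_\varepsilon^2$ terms cancel, leaving a clean splitting of $\mathcal{A}_u(\psi_\varepsilon)(x)$ into an advection part $\big(\tilde c_\varepsilon+c-\chi\Psi_x(x;u)\big)\psi_\varepsilon'(x)$ plus a reaction part $\psi_\varepsilon(x)\big(r(x)-(r^*-\varepsilon)+\tfrac{\chi\mu r^*}{b-\chi\mu}-\chi\nu\Psi(x;u)\big)$. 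I then show each part is nonnegative. For the reaction part I use the normalization $r(x)\ge r^*-\varepsilon$ for $x>x_0$ together with the estimate $\chi\nu\Psi(x;u)\le\tfrac{\chi\mu r^*}{b-\chi\mu}$ from Lemma \ref{V-estimat} (which applies because $u\in\mathcal{E}_1^+$ forces $0\le u\le\tfrac{r^*}{b-\chi\mu}$); the bracket is then $\ge 0$, and $\psi_\varepsilon>0$ finishes this part.

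The hard part will be the advection term. Since $\psi_\varepsilon'>0$, I only need $\tilde c_\varepsilon+c-\chi\Psi_x(x;u)\ge 0$; Lemma \ref{V-estimat} gives $\chi\Psi_x(x;u)\le\tfrac{\chi\mu r^*}{2\sqrt\nu(b-\chi\mu)}$, so it suffices that $c+\tilde c_\varepsilon\ge\tfrac{\chi\mu r^*}{2\sqrt\nu(b-\chi\mu)}$. This is exactly where {\bf (H1)} and the asymptotics of the ignition speed must be matched. Writing $A=\tfrac{\chi\mu r^*}{2\sqrt\nu(b-\chi\mu)}$ and $B=2\sqrt{\tfrac{r^*(b-2\chi\mu)}{b-\chi\mu}}$, condition {\bf (H1)} reads $c>A-B$, i.e. $A-c<B$. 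Because $\tilde c_\varepsilon\to B$ as $\varepsilon\to 0^+$, there is $\varepsilon_0>0$ such that $\tilde c_\varepsilon>A-c$ for all $0<\varepsilon<\varepsilon_0$, which yields $\tilde c_\varepsilon+c>A\ge\chi\Psi_x(x;u)$ and hence the desired sign. Combining the two parts gives $\mathcal{A}_u(\psi_\varepsilon)(x)\ge 0$ on $\{x>x_0\}$.

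Finally, for the ordering $U_1^-<U_1^+$: on $\{x\le x_0\}$ we have $U_1^-=0<U_1^+$, since $U_1^+>0$ everywhere; on $\{x>x_0\}$, because $x_0>x_1$ we have $e^{\theta_1(x-x_1)}>1$, so $U_1^+(x)=\tfrac{r^*}{b-\chi\mu}$, whereas $\psi_\varepsilon(x)\le\psi_\varepsilon(\infty)=\tfrac{(r^*-\varepsilon)(b-\chi\mu)-\chi\mu r^*}{(b-\chi\mu)^2}<\tfrac{r^*}{b-\chi\mu}$ by comparing numerators. Thus $U_1^-<U_1^+$ on all of $\R$. Apart from the advection-sign step, where the exact form of {\bf (H1)} must be played against $\lim_{\varepsilon\to0^+}\tilde c_\varepsilon=B$, everything reduces to the substitution of \eqref{ignit-TV} and sign bookkeeping powered by Lemma \ref{V-estimat}.
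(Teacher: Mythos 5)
Your proof is correct and follows essentially the same route as the paper's: split at $x_0$, use the ignition traveling-wave equation \eqref{ignit-TV} to eliminate $\psi_\varepsilon''$, invoke the normalization $r(x)\ge r^*-\varepsilon$ for $x>x_0$ and the bounds of Lemma \ref{V-estimat} on $\Psi$ and $\Psi_x$, and play {\bf (H1)} against $\lim_{\varepsilon\to 0^+}\tilde c_\varepsilon = 2\sqrt{r^*(b-2\chi\mu)/(b-\chi\mu)}$ to get the sign of the advection term. The only difference is organizational — you substitute the ODE first and then bound the advection and reaction pieces separately, whereas the paper bounds first and lets the ODE cancel the remainder at the end — which is a cosmetic rearrangement of the same estimates.
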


\begin{proof}
Let $\delta =c-\frac{\chi\mu r^*}{2\sqrt{\nu}(b-\chi\mu)}+2\sqrt{\frac{r^*(b-2\chi\mu)}{b-\chi\mu}}$.
By {\bf (H1)}, $\delta>0$.
Since $\lim_{\varepsilon\to 0^+} \tilde c_{\varepsilon}=2\sqrt{\frac{r^*(b-2\chi\mu)}{b-\chi\mu}}$, it then follows that for  $0<\varepsilon\ll 1$, we have $\tilde c_{\varepsilon}>2\sqrt{\frac{r^*(b-2\chi\mu)}{b-\chi\mu}}-\frac{\delta}{2}$.

Fix such $\varepsilon$. For every $u\in \mathcal{E}_1^+$. If $x>x_0$, $U_1^{-}(x)=\psi_{\varepsilon}(x)>0$ and $U_{1x}^-(x)>0$. By Lemma \ref{V-estimat} and \eqref{ignit-TV}, we have
\begin{align}
\mathcal{A}_u(\psi_{\varepsilon}(\cdot))(x)&=\psi_{\varepsilon}^{''}+(c-\chi\Psi_x(x;u))\psi_\varepsilon ^{'}+\psi_{\varepsilon}(r(x)-\chi\nu\Psi(x;u)-(b-\chi\mu)\psi_{\varepsilon})\cr
&\ge \psi_{\varepsilon}^{''}-\tilde c_{\varepsilon}\psi_{\varepsilon}^{'}+(c-\chi\Psi_x(x;u)+\tilde c_{\varepsilon})\psi_{\varepsilon}^{'}+\psi_{\varepsilon}(r^*-\varepsilon-\chi\nu\Psi(x;u)-(b-\chi\mu)\psi_{\varepsilon})\cr
&\geq \psi_{\varepsilon}^{''}-\tilde c_{\varepsilon}\psi_{\varepsilon}^{'}+(c-\chi\frac{\mu r^*}{2\sqrt{\nu}(b-\chi\mu)}+2\sqrt{\frac{r^*(b-2\chi\mu)}{b-\chi\mu}}-\frac{\delta}{2})\psi_{\varepsilon}^{'}\cr
&\,\, +\psi_{\varepsilon}(r^*-\varepsilon-\chi\mu\frac{r^*}{b-\chi\mu}-(b-\chi\mu)\psi_{\varepsilon})\cr
&=(c-\chi\frac{\mu r^*}{2\sqrt{\nu}(b-\chi\mu)}+2\sqrt{\frac{r^*(b-2\chi\mu)}{b-\chi\mu}}-\frac{\delta}{2})\psi_{\varepsilon}^{'}\cr
&\ge  0.
\end{align}
If $x<x_0$, $U_1^{-}(x)=0$. Then $\mathcal{A}_u(U_1^-)(x)=0$.

Since $x_1<x_0$, it is clear that $U_1^{-}(x)<U_1^{+}(x)$ for all  $x\in\R$. The lemma is thus proved.
\end{proof}


 Let
$$
\mathcal{E}_1=\{u\in C^b_{\rm unif}(\R)\ :\  U_1^-(x)\leq u(x)\leq U_1^{+}(x), \ \forall\ x\in\R\}.
$$
For any $u\in \mathcal{E}_1$,
let $U(t,x;u)$ be the solution of the following parabolic equation
\begin{equation}\label{U-def}
\begin{cases}
U_t=\mathcal{A}_{u}(U),\quad\ t>0,\,\,  x\in\R\cr
U(0,x;u)=U_1^{+}(x).
\end{cases}
\end{equation}

\begin{lem}\label{limit-of-U-lm-1}
Suppose that {\bf (H1)} holds.
For any $u\in \mathcal{E}_1$, ${{U_1^*(x;u)}}=\lim_{t\to\infty}U(t,x;u)$ exists  and satisfies the elliptic equation
\begin{equation}\label{U-elliptic-eq}
0=U_{xx} +(c -\chi\Psi_x(x;u))U_x+(r(x)-\chi\nu\Psi(x;u)-(b-\chi\mu)U)U \quad \forall\,\ x\in\R.
\end{equation}
Moreover, ${U_1^*(\cdot;u)}\in\mathcal{E}_1$.
\end{lem}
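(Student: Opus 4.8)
The plan is to treat \eqref{U-def}, for a fixed $u \in \mathcal{E}_1$, as a scalar semilinear parabolic equation for $U$ with $x$-dependent but $t$-independent coefficients, and to extract the stationary limit by a monotonicity-plus-regularity argument. Note first that $\mathcal{E}_1 \subset \mathcal{E}_1^+$, since $U_1^- = \max\{\psi_\varepsilon,0\} \ge 0$, so Lemmas \ref{super-solu-lm} and \ref{subsolution} apply to the chosen $u$. The operator $\mathcal{A}_u$ has the form $U_{xx} + a(x)U_x + (g(x) - (b-\chi\mu)U)U$ with $a = c - \chi\Psi_x(\cdot;u)$ and $g = r - \chi\nu\Psi(\cdot;u)$ both bounded and continuous (indeed $\Psi(\cdot;u)$ is $C^2$ and bounded by Lemma \ref{V-estimat}, and $r$ is globally H\"older), so the comparison principle for \eqref{U-def} is available.

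First I would establish a two-sided bound together with monotonicity in $t$. By Lemma \ref{super-solu-lm}, $U_1^+$ is a generalized stationary super-solution of \eqref{U-def} (a minimum of the constant super-solution and the exponential super-solution, with a concave corner at $x_1$), and since $U(0,\cdot;u) = U_1^+$, comparison gives $U(t,x;u) \le U_1^+(x)$ for all $t \ge 0$. Applying comparison to the pair $U(\cdot+h,\cdot;u)$ and $U(\cdot,\cdot;u)$, whose data at $t=0$ satisfy $U(h,\cdot;u) \le U_1^+ = U(0,\cdot;u)$, and using the autonomy of the equation, yields $U(t+h,x;u) \le U(t,x;u)$ for all $h,t\ge 0$; hence $t \mapsto U(t,x;u)$ is non-increasing. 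On the other side, Lemma \ref{subsolution} shows $U_1^-$ is a generalized stationary sub-solution (a maximum of the sub-solution $\psi_\varepsilon$ and $0$, with a convex corner at $x_0$), and $U(0,\cdot;u) = U_1^+ \ge U_1^-$, so comparison gives $U(t,x;u) \ge U_1^-(x)$ for all $t\ge 0$. Therefore $U_1^-(x) \le U(t,x;u) \le U_1^+(x)$ with $U$ monotone in $t$, and the pointwise limit $U_1^*(x;u) := \lim_{t\to\infty} U(t,x;u)$ exists and satisfies $U_1^-(x) \le U_1^*(x;u) \le U_1^+(x)$.

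Next I would upgrade this pointwise monotone convergence to convergence in $C^{1,2}_{\rm loc}$ in order to pass to the limit in the equation. Setting $U^n(t,x) = U(t+n,x;u)$ on $[0,2]\times\R$, these are uniformly bounded solutions of the same autonomous parabolic equation, whose coefficients are bounded uniformly (with $g$ globally H\"older since $r$ is). Interior parabolic $L^p$ and Schauder estimates then bound $\{U^n\}$ in $C^{1+\alpha/2,\,2+\alpha}_{\rm loc}((0,2]\times\R)$, so by Arzel\`a--Ascoli a subsequence converges in $C^{1,2}_{\rm loc}$ to a limit solving the parabolic equation. Because $U(t,\cdot;u)$ converges pointwise and monotonically to the $t$-independent function $U_1^*$, the limit must equal $U_1^*$ and have vanishing time-derivative; passing to the limit in $U^n_t = \mathcal{A}_u(U^n)$ therefore gives \eqref{U-elliptic-eq}. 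Finally, \eqref{U-elliptic-eq} with the bound $0 \le U_1^- \le U_1^* \le U_1^+$ and standard elliptic regularity shows $U_1^*(\cdot;u) \in C^b_{\rm unif}(\R)$, whence $U_1^*(\cdot;u) \in \mathcal{E}_1$.

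The main obstacle I anticipate is the careful justification of the comparison principle when the barriers $U_1^\pm$ are only Lipschitz with corners (at $x_1$ for $U_1^+$ and at $x_0$ for $U_1^-$): one must verify that the minimum of super-solutions remains a super-solution and the maximum of sub-solutions remains a sub-solution in the generalized sense compatible with the parabolic comparison principle, or equivalently smooth out the corners and pass to the limit. Everything else is routine monotone-iteration together with the parabolic regularity machinery.
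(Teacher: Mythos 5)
Your proposal is correct and follows essentially the same route as the paper's proof: monotone decrease in $t$ from the super-solution property of $U_1^+$ and the comparison principle, the lower barrier $U_1^-$ from Lemma \ref{subsolution}, and parabolic a priori estimates to pass to the limit in the equation. Your treatment simply fills in details the paper leaves implicit (the interior Schauder/Arzel\`a--Ascoli argument behind ``a priori estimates'' and the handling of the Lipschitz corners of $U_1^\pm$ as generalized super/sub-solutions), so no gap remains.
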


\begin{proof}
First, thanks to  Lemma \ref{super-solu-lm},  it follows from the comparison principle for parabolic equations that
$$
U(t_2,x;u)\leq U(t_1,x;u)\leq U_1^+(x),\quad \forall \,\, x\in\R,\ 0<t_1<t_2, \ u\in\mathcal{E}_1.
$$
Thus the function
\begin{equation}\label{U^*-main-def}
U_1^*(x;u)=\lim_{t\to\infty}U(t,x;u),\quad \forall\ u\in\mathcal{E}_1
\end{equation}
is well defined.
Moreover, by a priori  estimates for parabolic equations, it is not difficult to see that {$U_1^*(\cdot;u)\in \mathcal{E}_1^+$} and $U_1^*(x;u)$
satisfies \eqref{U-elliptic-eq}.

Next, it follows from Lemma \ref{subsolution} and the comparison principle for parabolic equations that
\begin{equation}\label{lower-bound of U}
U_1^{-}(x)\leq U(t,x;u),\quad \quad \forall \,\,x\in\R,\ t>0, \ u\in\mathcal{E}_1.
\end{equation}
Hence,
\begin{equation}
U_1^{-}(x)\leq { U_1^*(x;u)},\quad \forall\ x\in\R,\ \forall\ u\in\mathcal{E}_1.
\end{equation}
 Therefore, $U_1^*(\cdot;u)\in\mathcal{E}_1$.
The lemma is thus proved.
\end{proof}

\begin{lem}
\label{u-star-star-lm1}
Suppose that {\bf (H1)} holds. For any $u\in\mathcal{E}_1$, suppose that $U_{1*}(x;u)$ is also a solution of \eqref{U-elliptic-eq} in $\mathcal{E}_1$.
Then
\begin{equation}
\label{new-add-eq1}
\lim_{x\to\infty} \frac{U_{1*}(x;u)}{U_1^*(x;u)}=1.
\end{equation}
\end{lem}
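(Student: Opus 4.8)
The plan is to prove the asymptotic equivalence by a sliding argument combined with a translation--compactness limit, exploiting the crucial fact that the nonlocal coefficient $\Psi(\cdot;u)$ in \eqref{U-elliptic-eq} is one and the same function for both solutions, since it depends only on the fixed $u\in\mathcal E_1$. First I would record the two-sided bounds that both solutions inherit from membership in $\mathcal E_1$: for $x$ large, $U_1^-(x)=\psi_\varepsilon(x)$ tends to the constant $L_\varepsilon=\frac{(r^*-\varepsilon)(b-\chi\mu)-\chi\mu r^*}{(b-\chi\mu)^2}$, which is positive precisely because $b>2\chi\mu$ (this is where {\bf (H1)} enters), while $U_1^+(x)\equiv M:=\frac{r^*}{b-\chi\mu}$ for $x$ large. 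Hence there are $\delta_0>0$ and $X_0$ with $\delta_0\le U_{1*}(x;u),\,U_1^*(x;u)\le M$ for all $x\ge X_0$, so the ratio $U_1^*/U_{1*}$ is bounded on $[X_0,\infty)$. Since $U_1^*(\cdot;u)$ is obtained in Lemma \ref{limit-of-U-lm-1} as the decreasing limit of the flow started from $U_1^+\ge U_{1*}$, the parabolic comparison principle gives $U_{1*}(x;u)\le U_1^*(x;u)$ for all $x$; thus it suffices to prove $\liminf_{x\to\infty}U_{1*}/U_1^*\ge1$, equivalently $\eta:=\limsup_{x\to\infty} U_1^*(x;u)/U_{1*}(x;u)=1$.

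Next, I would assume for contradiction that $\eta>1$ and pick $x_n\to\infty$ realizing the limsup. Setting $U^{*}_n(x)=U_1^*(x+x_n;u)$, $U_{*,n}(x)=U_{1*}(x+x_n;u)$, $\Psi_n(x)=\Psi(x+x_n;u)$ and $r_n(x)=r(x+x_n)$, the uniform bounds on $\Psi$ and $\Psi_x$ from Lemmas \ref{new-lm1} and \ref{V-estimat}, together with interior elliptic estimates, let me pass to a subsequence along which $U^*_n\to\tilde U^*$ and $U_{*,n}\to\tilde U_*$ in $C^2_{\rm loc}(\R)$, $\Psi_n\to\tilde\Psi$ in $C^1_{\rm loc}(\R)$, while $r_n\to r^*$ locally uniformly (Case 1). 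Because $\Psi_n$ is the same sequence for both solutions, $\tilde U^*$ and $\tilde U_*$ solve the \emph{same} limiting equation $W_{xx}+(c-\chi\tilde\Psi_x)W_x+(r^*-\chi\nu\tilde\Psi-(b-\chi\mu)W)W=0$, with $\tilde U_*\ge\delta_0>0$, and by the choice of $x_n$ one has $\tilde U^*(x)\le\eta\,\tilde U_*(x)$ for all $x$ with equality at $x=0$.

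Finally I would reach a contradiction at the interior contact point. A direct computation using that $\tilde U_*$ solves the limiting equation shows that $\eta\tilde U_*$ is a strict supersolution: plugging it into the limiting operator leaves the residual $-(b-\chi\mu)\eta(\eta-1)\tilde U_*^2<0$, since $\eta>1$ and $\tilde U_*\ge\delta_0>0$. Hence $z:=\eta\tilde U_*-\tilde U^*\ge0$ satisfies the linear inequality $z_{xx}+(c-\chi\tilde\Psi_x)z_x+\tilde Q(x)z<0$, obtained by subtracting the two equations and factoring the quadratic difference, the zeroth-order coefficient being $\tilde Q=r^*-\chi\nu\tilde\Psi-(b-\chi\mu)(\eta\tilde U_*+\tilde U^*)$; yet $z$ attains the interior minimum value $z(0)=0$, so $z'(0)=0$ and $z''(0)\ge0$, and evaluating the inequality at $x=0$ forces $0>z''(0)+\tilde Q(0)\cdot0\ge0$, a contradiction. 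Therefore $\eta=1$, which together with $U_{1*}\le U_1^*$ yields \eqref{new-add-eq1}. The main obstacle is the translation--compactness passage in the second step: one must guarantee that $\Psi(\cdot+x_n;u)$ and its derivative converge locally uniformly so that both shifted solutions pass to the limit against identical coefficients, which is what makes the touching-solutions contradiction legitimate despite the fact that $u$, and hence the coefficients themselves, need not converge at $+\infty$; this is exactly what the explicit representation and bounds in Lemmas \ref{new-lm1} and \ref{V-estimat} provide.
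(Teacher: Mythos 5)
Your proof is correct, and its first half coincides with the paper's: both arguments argue by contradiction, translate along a sequence $x_n\to\infty$, use the bounds on $\Psi(\cdot;u)$, $\Psi_x(\cdot;u)$, $\Psi_{xx}(\cdot;u)$ (Lemmas \ref{new-lm1}, \ref{V-estimat}) to extract locally uniform limits of the shifted coefficients, and exploit the key structural fact that both solutions see the \emph{same} $\Psi(\cdot;u)$, so their translates converge to solutions of one and the same limit equation with $r$ replaced by $r^*=r(+\infty)$. Where you genuinely diverge from the paper is the final step. The paper proves that the two limit functions $\underline{U}_*$ and $\overline{U}^*$ must be \emph{identical}, by introducing the part metric $\rho(\underline{U}_*,\overline{U}^*)$ and invoking the contraction argument of \cite[Proposition 3.4]{KoSh} (if $\rho>0$ then $\rho\le\rho-\delta_0$, a contradiction); this Liouville-type uniqueness then contradicts the defect $\underline{U}_*(0)\le(1-\delta)\overline{U}^*(0)$. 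You instead normalize the contradiction hypothesis as $\eta:=\limsup_{x\to\infty}U_1^*/U_{1*}>1$, arrange the contact $\tilde U^*\le\eta\tilde U_*$ with equality at $x=0$, and observe that $\eta\tilde U_*$ is a \emph{strict} supersolution of the limit equation because the logistic nonlinearity is strictly subhomogeneous (residual $-(b-\chi\mu)\eta(\eta-1)\tilde U_*^2<0$, valid since $\tilde U_*\ge\delta_0>0$, which is where $b>2\chi\mu$ from {\bf (H1)} guarantees $U_1^-(+\infty)>0$); the second-derivative test at the interior contact point then gives the contradiction. Your route is self-contained and more elementary: it avoids the citation to \cite{KoSh} and proves only what is needed (no ordered contact at ratio $\eta>1$), rather than full uniqueness of bounded entire solutions of the limit equation. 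The paper's part-metric argument is, at bottom, powered by the same subhomogeneity, but it yields the stronger Liouville property as a by-product. Both are valid proofs of \eqref{new-add-eq1}.
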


\begin{proof}
First of all, {by Lemma \ref{V-estimat}, $\sup_{x\in\R}|\Psi(x;u)|<\infty$ and $\sup_{x\in\R}|\frac{\p}{\p x}\Psi(x;u)|<\infty$. By $\Psi_{xx}(x;u)=\nu \Psi(x;u)-\mu u$, we have $\sup_{x\in\R}|\Psi_{xx}(x;u)|<\infty$.} This implies that
for any $\{x_n\}_{n=1}^{\infty}\subset\R$, there is $\{x_{n_k}\}_{k=1}^{\infty}\subset \{x_n\}_{n=1}^{\infty}$ such that $\lim_{k\to\infty} \Psi(x+x_{n_k};u)$ and $\lim_{k\to\infty}\Psi_x(x+x_{n_k};u)$  exist locally uniformly on $\R$.

Next, note that $\frac{U_{1*}(x;u)}{U_1^*(x;u)}\le 1$ for all $x\in\R$. It then suffices to prove that
$\liminf_{x\to \infty}\frac{U_{1*}(x;u)}{U_1^*(x;u)}\ge 1$. Assume by contraction that
$$\liminf_{x\to \infty}\frac{U_{1*}(x;u)}{U_1^*(x;u)}<1.
$$
Then there are $0<\delta<1$ and  $x_n\to\infty$ such that
$$
\frac{U_{1*}(x_n;u)}{U_1^*(x_n;u)}\le 1-\delta \quad \forall\, \, n=1,2,\cdots.
$$
Let
$$
U_{n,1*}(x;u)=U_{1*}(x+x_n;u),\quad U_{n,1}^*(x;u)=U_1^*(x+x_n;u),\quad \Psi_n(x;u)=\Psi(x+x_n;u).
$$
Without loss of generality, we may assume that there are $\underline{U}_*(x;u)$, $\overline{U}^*(x;u)$, and
$\Psi^*(x;u)$ such that
$$
\lim_{n\to \infty} U_{n,1*}(x;u)=\underline{U}_*(x;u),\quad \lim_{n\to\infty}U_{n,1}^*(x;u)=\overline{U}^*(x;u),\,\, {\rm and}\,\,
\lim_{n\to\infty}\Psi_n(x;u)=\Psi^*(x;u)
$$
locally uniformly on $\R$. This implies that both $\underline{U}_*(x;u)$ and $\overline{U}^*(x;u)$ are solutions of
\begin{equation}\label{U-star-elliptic-eq}
0=U_{xx} +(c -\chi\Psi^*_x(x;u))U_x+(r^*-\chi\nu\Psi^*(x;u)-(b-\chi\mu)U)U.
\end{equation}

{We now claim that $\underline{U}_*(x;u)\equiv\overline{U}^*(x;u)$. Indeed, note that
$$
0<\inf_{x\in\R}\underline{U}_*(x;u)\le \inf_{x\in\R} \overline{U}^*(x;u),\,\, {\rm and}\,\, \sup_{x\in\R}\underline{U}_*(x;u)\le \sup_{x\in\R}\overline{U}^*(x;u)<\infty.
$$
This implies that the following set is not empty,
 $$
 \{\gamma\geq1\,|\, \frac{1}{\gamma} \overline{U}^*(x;u)\le \underline{U}_*(x;u)\le \gamma \overline{U}^*(x;u)\quad \forall\,\,x\in\R\}.
 $$
Hence we can define
 $$
 \rho(\underline{U}_*,\overline{U}^*)=\inf\{\ln \gamma \,|\,  \frac{1}{\gamma} \overline{U}^*(x;u)\le \underline{U}_*(x;u)\le \gamma \overline{U}^*(x;u) \quad \forall\,\,x\in\R \}.
$$
Note that $\rho(\underline{U}_*,\overline{U}^*)$ is the so called {\it part metric} between $\underline{U}_*$ and $\overline{U}^*$.
Assume that { $\rho(\underline{U}_*,\overline{U}^*)>0$}. Then by the arguments of \cite[Proposition 3.4]{KoSh}, there is
$\delta_0>0$ such that
$$
\rho(\underline{U}_*,\overline{U}^*)\le \rho(\underline{U}_*,\overline{U}^*)-\delta_0,
$$
which is a contradiction. Hence {$\rho(\underline{U}_*,\overline{U}^*)=0$}}, and then $\underline{U}_*(x;u)=\overline{U}^*(x;u)$ for all $x\in\R$.
But, by the assumption,
$$
\underline{U}_*(0;u)\not=\overline{U}^*(0;u),
$$
which is a contradiction.
The lemma thus follows.
\end{proof}

\begin{lem}
\label{u-star-star-lm2}
Suppose that {\bf (H1)} holds.
For any $u\in\mathcal{E}_1$, $U_1^*(x;u)$ is the unique positive solution of \eqref{U-elliptic-eq} in $\mathcal{E}_1$.
\end{lem}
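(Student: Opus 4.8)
The plan is to prove uniqueness by first identifying $U_1^*(\cdot;u)$ as the \emph{maximal} solution of \eqref{U-elliptic-eq} in $\mathcal{E}_1$, and then upgrading maximality to uniqueness by combining Lemma~\ref{u-star-star-lm1} (which pins the behavior at $+\infty$) with the monotonicity of a generalized Wronskian. First I would record maximality: let $U_{1*}(\cdot;u)\in\mathcal{E}_1$ be any positive solution of \eqref{U-elliptic-eq}. Since $U_{1*}(\cdot;u)$ is a time-independent solution of \eqref{U-def} and $U_{1*}(x;u)\le U_1^+(x)=U(0,x;u)$, the comparison principle for parabolic equations gives $U_{1*}(x;u)\le U(t,x;u)$ for all $t>0$, and letting $t\to\infty$ yields
\[
U_{1*}(x;u)\le U_1^*(x;u)\qquad\forall\,x\in\R .
\]
It therefore remains to establish the reverse inequality, equivalently $h\equiv 1$ where $h(x)=U_{1*}(x;u)/U_1^*(x;u)\in(0,1]$.

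The mechanism I would use is the following. Writing $p=c-\chi\Psi_x(\cdot;u)$ and using $e^{cx-\chi\Psi(x;u)}$ as the integrating factor for the drift $p$, subtract the two copies of \eqref{U-elliptic-eq} to see that the generalized Wronskian
\[
W(x)=e^{cx-\chi\Psi(x;u)}\bigl(U_{1*}'(x;u)\,U_1^*(x;u)-U_{1*}(x;u)\,(U_1^*)'(x;u)\bigr)
\]
satisfies
\[
W'(x)=(b-\chi\mu)\,e^{cx-\chi\Psi(x;u)}\,U_{1*}(x;u)\,U_1^*(x;u)\,\bigl(U_{1*}(x;u)-U_1^*(x;u)\bigr)\le 0 ,
\]
so $W$ is non-increasing, and strictly decreasing on any interval where $U_{1*}<U_1^*$. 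Since $W=e^{cx-\chi\Psi}(U_1^*)^2\,h'$ and the prefactor is positive, controlling the sign of $W$ controls the sign of $h'$. If I can show $W(-\infty)=0$, then $W\le 0$ everywhere, hence $h'\le 0$, so $h$ is non-increasing; Lemma~\ref{u-star-star-lm1} gives $h(x)\to 1$ as $x\to\infty$, and a non-increasing function with $h\le 1$ and limit $1$ at $+\infty$ must equal $1$ identically. This forces $U_{1*}(\cdot;u)\equiv U_1^*(\cdot;u)$. (As a conceptual cross-check, the ratio $h$ solves $h''+Bh'+(b-\chi\mu)U_1^*\,h(1-h)=0$ with $B=2(U_1^*)'/U_1^*+c-\chi\Psi_x$; at an interior minimum with value in $(0,1)$ one would get $h''<0$ while $h''\ge0$, so any infimum $<1$ can only be approached at $\pm\infty$, and $+\infty$ is excluded by Lemma~\ref{u-star-star-lm1}, again leaving only the $-\infty$ end to treat.)

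The main obstacle is thus the claim $W(-\infty)=0$, i.e. the control of the solutions near $-\infty$. Here I would exploit membership in $\mathcal{E}_1$: for $x\le x_1$ both solutions obey the exponential bound $U_{1*}(x;u),\,U_1^*(x;u)\le \frac{r^*}{b-\chi\mu}e^{\theta_1(x-x_1)}$, and interior elliptic gradient estimates (the coefficients of \eqref{U-elliptic-eq} are bounded, and the quadratic term is $O(e^{2\theta_1 x})$) upgrade this to $|U_{1*}'|,|(U_1^*)'|\le Ce^{\theta_1 x}$ as $x\to-\infty$. Since $\Psi(\cdot;u)\ge 0$ and $2\theta_1+c=\sqrt{c^2-4r_1}>0$ (because $r_1<0$), these estimates give $|W(x)|\le C'e^{(c+2\theta_1)x}\to 0$ as $x\to-\infty$, which is exactly $W(-\infty)=0$. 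The delicate point is making sure the gradient decay is genuinely at rate $e^{\theta_1 x}$; this is routine but is where the argument could go wrong if the nonlinear term were not subordinate. An alternative route, consistent with how Lemma~\ref{u-star-star-lm1} was proved, would be a sliding/part-metric argument: set $\sigma^*=\sup_x U_1^*/U_{1*}\ge1$, rule out an interior maximum of the ratio by the strong maximum principle applied to the strict supersolution $\sigma^*U_{1*}$ (note $\mathcal{A}_u(\sigma^*U_{1*})=(b-\chi\mu)\sigma^*U_{1*}^2(1-\sigma^*)<0$ when $\sigma^*>1$), and dispose of the $-\infty$ end via the shift-and-rescale argument of \cite[Proposition 3.4]{KoSh}; the Wronskian computation above is the shortest way I see to close that $-\infty$ case.
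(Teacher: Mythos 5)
Your proof is correct, but it takes a genuinely different route from the paper's. The paper argues by a sweeping (part-metric type) argument: for each $\epsilon>0$ it sets $k_\epsilon=\inf\{k\ge 1:\ kU_{1*}(\cdot;u)\ge U_1^*(\cdot;u)-\epsilon\ \text{on}\ \R\}$, and, assuming $k_{\epsilon_0}>1$ for some $\epsilon_0$, locates a finite touching point $x_\epsilon$ where $k_\epsilon U_{1*}(x_\epsilon;u)=U_1^*(x_\epsilon;u)-\epsilon$; the first- and second-order conditions at $x_\epsilon$, combined with the equation, force $r(x_\epsilon)\ge 0$, so $x_\epsilon$ stays in a compact set (boundedness from above coming from Lemma~\ref{u-star-star-lm1}), and letting $\epsilon\to 0$ produces $k^*>1$ with $k^*U_{1*}\ge U_1^*$, equality at some point but not identically, contradicting the strong maximum principle. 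You instead exploit the one-dimensional structure: after the maximality step $U_{1*}\le U_1^*$ by parabolic comparison (a step the paper leaves implicit when it deduces $U_{1*}\equiv U_1^*$ from $k_\epsilon=1$), your weighted Wronskian is monotone, vanishes at $-\infty$ by the decay estimates, and hence the ratio $h=U_{1*}/U_1^*$ is nonincreasing; combined with $h\le 1$ and $h(+\infty)=1$ from Lemma~\ref{u-star-star-lm1}, this forces $h\equiv 1$. Both proofs use Lemma~\ref{u-star-star-lm1} to control $+\infty$; the real difference is at $-\infty$, which the paper handles through the sign of $r$ at the touching point and you handle through the rate $2\theta_1+c=\sqrt{c^2-4r_1}>0$ (your computation is right, and the gradient bound $|U'|\le Ce^{\theta_1 x}$ is indeed routine interior estimation, since the frozen coefficients of \eqref{U-elliptic-eq} are bounded). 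What each approach buys: the paper's touching-point argument is dimension-free and is reused almost verbatim for Lemma~\ref{uniqueness-of-U-lm2} in Case 2, while yours is shorter in the 1D setting, avoids the limiting procedure in $\epsilon$, and yields monotonicity of the ratio as a by-product. A small simplification of your own argument: you could bypass the gradient estimates entirely, because if $W$ had a positive limit at $-\infty$ (the limit exists by monotonicity), then $h'=e^{-cx+\chi\Psi}W/(U_1^*)^2\ge C^{-1}e^{-(c+2\theta_1)x}$ near $-\infty$ would be non-integrable there, contradicting $0<h\le 1$.
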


\begin{proof}
Suppose that $U_{1*}(x;u)$ is any positive solution of \eqref{U-elliptic-eq} in $\mathcal{E}_1$. It suffices to prove that
$U_{1*}(x;u)\equiv U_1^*(x;u)$.

For any $\epsilon>0$, let
$$
K_\epsilon=\{k\ge 1\,|\, kU_{1*}(x;u)\ge U_1^*(x;u)-\epsilon\quad \forall \, x\in\R\}.
$$
By Lemma \ref{u-star-star-lm1} and the fact $\lim_{x\to -\infty}U_{1*}(x;u)=\lim_{x\to -\infty}U_1^*(x;u)=0$, $K_\epsilon\not =\emptyset$.
Let
$$
k_\epsilon=\inf K_\epsilon.
$$
Then $k_\epsilon\ge 1$ and
\begin{equation}
\label{new-add-eq0}
k_\epsilon U_{1*}(x;u)\ge U_1^*(x;u)-\epsilon\quad \forall\, x\in\R.
\end{equation}
For any $0<\epsilon_1<\epsilon_2$, since
$$
k_{\epsilon_1} U_{1*}(x;u)\ge U_1^*(x;u)-\epsilon_1>U_1^*(x;u)-\epsilon_2\quad \forall\, x\in\R,
$$
it then follows that $k_{\epsilon_1}\geq k_{\epsilon_2}$. Thus, $k_\epsilon$ is nonincreasing in $\epsilon>0$.
If $k_{\epsilon}=1$ for any $\epsilon>0$, clearly, we have that ${U_1^*(x;u)}\equiv {U_{1*}(x;u)}.$

Assume that there exists $\epsilon_0>0$ such that $k_{\epsilon_0}>1$. Then
\begin{equation}
\label{new-add-eq2}
k_{\epsilon}\geq k_{\epsilon_0}>1 \quad \text{for any}\quad  0<\epsilon\leq \epsilon_0.
\end{equation}
For any given $0<\epsilon\leq \epsilon_0$, since $k_{\epsilon}>1$, there exists $\delta>0$, such that $k_{\epsilon}-\delta>1$.
By $$\lim_{x\to\infty} \frac{U_{1*}(x;u)}{U_1^*(x;u)}=1,$$
we have that for such given $\epsilon>0$,
$$
\frac{U_{1*}(x;u)}{U_1^*(x;u)}\geq 1-\frac{\epsilon}{U_1^*(x;u)} \quad x\gg 1.
$$
Hence,
\begin{equation}\label{x>>1}
(k_{\epsilon}-\delta)U_{1*}(x;u)\geq U_1^*(x;u)-\epsilon \quad x\gg 1.
\end{equation}
Since
$\lim_{x\to-\infty}\frac{U_1^*(x;u)-\epsilon}{U_{1*}(x;u)}=-\infty$,
it then clear that
\begin{equation}\label{x<<-1}
(k_{\epsilon}-\delta)U_{1*}(x;u)\geq U_1^*(x;u)-\epsilon \quad x\ll -1.
\end{equation}
It then follows from \eqref{x>>1}, \eqref{x<<-1} and the definition of $k_{\epsilon}$ that
there is $x_\epsilon\in\R$ such that
\begin{equation}
\label{new-add-eq3}
k_\epsilon U_{1*}(x_\epsilon;u)=U_1^*(x_\epsilon;u)-\epsilon.
\end{equation}
 By \eqref{new-add-eq1}, $x_\epsilon$ is bounded above.

We claim that $x_\epsilon$ is bounded from below.  In fact, at $x_\epsilon$, we have
$$
\p_{xx}(k_\epsilon U_{1*}(x_\epsilon;u)-U_1^*(x_\epsilon;u))\ge 0,\quad \p_x (k_\epsilon U_{1*}(x_\epsilon;u)-U_1^*(x_\epsilon;u))=0,
$$
and hence
\begin{align*}
0&\ge  k_\epsilon U_{1*}(x_\epsilon)(r(x_\epsilon )-\chi\nu\Psi(x_\epsilon;u)-(b-\chi\mu)U_{1*}(x_\epsilon))-U_1^*(x_\epsilon)(r(x_\epsilon )-\chi\nu\Psi(x_\epsilon;u)-(b-\chi\mu)U_1^*(x_\epsilon))\\
&\ge k_\epsilon U_{1*}(x_\epsilon)(r(x_\epsilon )-\chi\nu\Psi(x_\epsilon;u)-(b-\chi\mu)U_1^*(x_\epsilon))-U_1^*(x_\epsilon)(r(x_\epsilon )-\chi\nu\Psi(x_\epsilon;u)-(b-\chi\mu)U_1^*(x_\epsilon))\\
&=-\epsilon (r(x_\epsilon )-\chi\nu\Psi(x_\epsilon;u)-(b-\chi\mu)U_1^*(x_\epsilon)).
\end{align*}
This implies that $r(x_\epsilon)\ge 0$ and hence $x_\epsilon$ is bounded from below.

Therefore,  $x_{\epsilon}$ is bounded both from below and above.
By \eqref{new-add-eq3},
$$
k_\epsilon=\frac{U_1^*(x_\epsilon;u)-\epsilon}{U_{1*}(x_\epsilon;u)}.
$$
Hence $k_\epsilon$ is bounded,  and there is $\epsilon_n\to 0$ such that
$x_{\epsilon_n}\to x^*$ and $k_{\epsilon_n}\to k^*(\ge k_{\epsilon_0}>1)$ as $n\to\infty$.
This together with \eqref{new-add-eq3} implies that
$k^* U_{1*}(x^*;u)=U_1^*(x^*;u).$
By \eqref{new-add-eq0},
$$
k^* U_{1*}(x;u)\ge U_1^*(x;u)\quad \forall\, x\in \R.
$$
Since $k^*>1$, by \eqref{new-add-eq1},  $k^* U_{1*}(x;u)\not \equiv U_1^*(x;u)$.
Then by the comparison principle for parabolic equations, we must have
$$
U_1^*(x;u)<k^*U_{1*}(x;u) \quad \forall\,\ x\in\R,
$$
which is a contraction. Therefore, $k_\epsilon= 1$ for $0<\epsilon\ll 1$
and then
$
U_{1*}(x;u)\equiv U_1^*(x;u).
$
\end{proof}

We now prove Theorem \ref{forced-wave-thm1}.

\begin{proof}[Proof of Theorem \ref{forced-wave-thm1}]
Consider the mapping {$U_1^*(\cdot;\cdot): \mathcal{E}_1\ni u \mapsto U_1^*(x;u)\in\mathcal{E}_1$} as defined by \eqref{U^*-main-def}. It follows from the arguments of the proof of \cite[Theorem  3.1]{SaSh2} {and the Lemma
\ref{u-star-star-lm2}} that this function is continuous
and compact in the compact open topology. Hence it has a fixed point $u^*$ by the Schauder's fixed point Theorem. Taking $v^*(x)=\Psi(x;u^*)$, we have from \eqref{U-elliptic-eq}, that $(u(t,x),v(t,x))=(u^*(x-ct),v^*(x-ct))$ is an entire solution of \eqref{Keller-Segel-eq0}. Moreover, since  $U_1^-(x)\leq u^*(x)\leq U_1^{+}(x)$, it follows that
 $ \lim_{x\to-\infty}u^*(x)=0.$

In the following we show that
\begin{equation}\label{kk-0}
\lim_{x\to\infty}u^*(x)=\frac{r^*}{b}.
\end{equation}
Suppose on the contrary that this is false. Then, there is { a constant $\delta>0$ and a sequence} $\{x_n\}_{n\in\N}$  such that
$x_{n}\to\infty$ and
\begin{equation}\label{kkk-1}
|u^*(x_n)-\frac{r^*}{b}|\geq \delta, \,\, \forall\,\, n\geq 1.
\end{equation}
Consider the sequence of functions
$$
u^{n}(t,x)=u(t,x+x_n)\quad \text{and}\quad v^{n}(t,x)=v(t,x+x_n).
$$
By a priori estimate for parabolic equations, without loss of generality, we may assume that there is $(u^{**}(t,x),v^{**}(t,x))\in C^{1,2}(\R\times\R)$ such that $(u^{n},v^{n})(t,x)\to$  $(u^{**}(t,x),v^{**}(t,x))$ {locally uniformly in $C^{1,2}(\R\times\R)$} as $n\to\infty$. Furthermore, the function $(u^{**}(t,x),v^{**}(t,x))$ is an entire solution of the following equation
\begin{equation}\label{final-eq}
\begin{cases}
u_t=u_{xx}-\chi(uv_x)_x +u(r^*-bu),\quad x\in\R\cr
0=v_{xx}- \nu v+\mu u,\quad x\in\R,
\end{cases}
\end{equation}
Note that
$$
u^{**}(t,x)=\lim_{n\to\infty}u^{n}(t,x)\geq \lim_{n\to\infty}U_1^{-}(x+x_n-ct)=U_1^-(\infty)>0,\quad \forall\ x\in\R, \ t\in\R.
$$
So $\inf_{(t,x)\in\R\times\R}u^{**}(t,x)>0$.

Therefore, since $\chi\mu<\frac{b}{2}$, it follows from  Lemma \ref{lem-001-4} that $u^{**}(t,x)=\frac{r^*}{b}$ for every $(t,x)\in \R\times\R$. In particular, $\frac{r^*}{b}= u^{**}(0,0)=\lim_{n\to\infty}u^{n}(0,0) =\lim_{n\to\infty}u(0,x_n)=\lim_{n\to\infty}u^*(x_n) $, which contradicts to \eqref{kkk-1}. Therefore, \eqref{kk-0} must hold.
\end{proof}

\section{Forced wave solutions in {\bf Case 2}}

In this section, we study the existence of forced wave solutions of \eqref{Keller-Segel-eq0} with $r(x)$ being as in {\bf Case 2}
and prove Theorem \ref{forced-wave-thm2}. We first present some lemmas. Throughout this section, we assume that
 $r(x)$ is as in {\bf Case 2} and {\bf (H2)} holds.

 Fix a $\bar r$ with $\max\{r(-\infty),r(\infty)\}<\bar r<0$. Choose $\bar x$ such that  the inequality $r(x)<\bar r$ holds for all $x<\bar x$.
Let $\bar \theta$ be the positive solution of $\bar \theta^2+c\bar \theta+\bar r=0$.
Choose  $\tilde x$ such  that  the inequality $r(x)<\bar r$ holds for all $x>\tilde x$.
Let $\tilde \theta$ be the positive solution of $\tilde \theta^2-c\tilde \theta+\bar  r=0$.

\smallskip

Define
$$
U_2^+(x)=\begin{cases}
\frac{r^*}{b-\chi\mu}e^{\bar\theta(x-\bar x)} \quad  &{\rm if}\,\,  x<\bar x,\cr
\frac{r^*}{b-\chi\mu}                     \quad &{\rm if}\,\, \bar x\leq x\leq \tilde x,\cr
\frac{r^*}{b-\chi\mu}e^{-\tilde\theta(x-\tilde x)} \quad  &{\rm if}\,\,  x>\tilde x,\cr
\end{cases}
$$
and consider the set
$$
\mathcal{E}_2^+=\{u\in C^b_{\rm unif}(\R)\ :\ 0\leq u(x)\leq U_2^{+}(x), \ \forall\ x\in\R\}.
$$
For every $u\in\mathcal{E}_2^+$, consider the operator
$$
\mathcal{A}_{u}(U)(x)=U_{xx}(x)+(c-\chi\Psi_x(x;u))U_{x}(x)+(r(x)-\chi\nu\Psi(x;u)-(b-\chi\mu)U(x))U(x),
$$
where $\Psi(x;u)$ is given by \eqref{psi-definition}.

\begin{lem}
\label{super-solu-lm2}
Suppose that $b\ge \frac{3\chi\mu}{2}$. For every $u\in \mathcal{E}_2^+$, it holds that
$\mathcal{A}_{u}(\frac{r^*}{b-\chi\mu}e^{\bar\theta(\cdot-\bar x)})(x)\leq 0$ for $x\in (-\infty,\bar x)$,
$\mathcal{A}_{u}(\frac{r^*}{b-\chi\mu})(x)\leq 0$ for $x\in\R$ and
$\mathcal{A}_{u}(\frac{r^*}{b-\chi\mu}e^{-\tilde\theta(\cdot-\tilde x)})(x)\leq 0$ for $x\in (\tilde x,\infty)$.
\end{lem}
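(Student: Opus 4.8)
The plan is to mirror the proof of Lemma \ref{super-solu-lm} from {\bf Case 1}, treating the three pieces of $U_2^+$ one at a time. The constant piece is immediate: since $r(x)\le r^*$ and $\Psi(x;u)\ge 0$ (because $u\ge 0$), a direct substitution gives
$$
\mathcal{A}_u\Big(\tfrac{r^*}{b-\chi\mu}\Big)(x)=\tfrac{r^*}{b-\chi\mu}\big(r(x)-\chi\nu\Psi(x;u)-r^*\big)\le 0\quad\forall\,x\in\R .
$$
For the left exponential piece $\frac{r^*}{b-\chi\mu}e^{\bar\theta(\cdot-\bar x)}$ on $(-\infty,\bar x)$, the computation is verbatim the one in Lemma \ref{super-solu-lm}, now with $(\bar\theta,\bar x,\bar r)$ in place of $(\theta_1,x_1,r_1)$: using $r(x)<\bar r$ together with $\bar\theta^2+c\bar\theta+\bar r=0$ kills the leading terms and reduces matters to controlling $-\chi\bar\theta\Psi_x(x;u)-\chi\nu\Psi(x;u)$, after which Lemma \ref{new-lm1} and the supersolution bound $u(y)\le\frac{r^*}{b-\chi\mu}e^{\bar\theta(y-\bar x)}$ for $y<\bar x$ close the estimate precisely under $b\ge\frac32\chi\mu$. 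Hence the only genuinely new computation is the right exponential tail.

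For the right tail, write $\tilde W(x)=\frac{r^*}{b-\chi\mu}e^{-\tilde\theta(x-\tilde x)}$, so that $\tilde W_x=-\tilde\theta\tilde W$ and $\tilde W_{xx}=\tilde\theta^2\tilde W$. Substituting into $\mathcal{A}_u$ and factoring out $\tilde W$ yields
$$
\mathcal{A}_u(\tilde W)(x)=\tilde W\big[\tilde\theta^2-c\tilde\theta+r(x)+\chi\tilde\theta\Psi_x(x;u)-\chi\nu\Psi(x;u)-(b-\chi\mu)\tilde W\big].
$$
For $x>\tilde x$ we have $r(x)<\bar r$, and since $\tilde\theta$ solves $\tilde\theta^2-c\tilde\theta+\bar r=0$ the first three terms sum to $r(x)-\bar r<0$; thus it suffices to prove $\chi\tilde\theta\Psi_x(x;u)-\chi\nu\Psi(x;u)-(b-\chi\mu)\tilde W(x)\le 0$ for $x>\tilde x$.

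The main obstacle — exactly as in {\bf Case 1}, but with the roles reversed — is controlling the sign-indefinite chemotaxis term $\chi\tilde\theta\Psi_x$, which is the dangerous one here because on the right tail $\Psi_x$ can be positive. Using Lemma \ref{new-lm1} and setting $A=e^{-\sqrt\nu x}\int_{-\infty}^x e^{\sqrt\nu y}u(y)\,dy\ge 0$ and $B=e^{\sqrt\nu x}\int_x^\infty e^{-\sqrt\nu y}u(y)\,dy\ge 0$, one has $\Psi=\frac{\mu}{2\sqrt\nu}(A+B)$ and $\Psi_x=\frac{\mu}{2}(-A+B)$, so that
$$
\tilde\theta\Psi_x-\nu\Psi=\tfrac{\mu}{2}\big[-(\tilde\theta+\sqrt\nu)A+(\tilde\theta-\sqrt\nu)B\big]\le\tfrac{\mu}{2}(\tilde\theta-\sqrt\nu)B,
$$
where the favorable $A$-term is dropped. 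If $\tilde\theta\le\sqrt\nu$ this is already nonpositive. If $\tilde\theta>\sqrt\nu$, the bound $u(y)\le\frac{r^*}{b-\chi\mu}e^{-\tilde\theta(y-\tilde x)}$ for $y>\tilde x$ gives $B\le\frac{r^*}{b-\chi\mu}\frac{1}{\tilde\theta+\sqrt\nu}e^{-\tilde\theta(x-\tilde x)}$, whence, after subtracting $(b-\chi\mu)\tilde W=r^*e^{-\tilde\theta(x-\tilde x)}$,
$$
\mathcal{A}_u(\tilde W)(x)\le\frac{(r^*)^2}{b-\chi\mu}\,e^{-2\tilde\theta(x-\tilde x)}\Big(\frac{\chi\mu(\tilde\theta-\sqrt\nu)}{2(b-\chi\mu)(\tilde\theta+\sqrt\nu)}-1\Big)\le 0 .
$$
The final inequality holds because $\frac{\tilde\theta-\sqrt\nu}{\tilde\theta+\sqrt\nu}<1$ and $b\ge\frac{3\chi\mu}{2}$ forces $\frac{\chi\mu}{2(b-\chi\mu)}\le 1$; this is exactly where the hypothesis $b\ge\frac{3\chi\mu}{2}$ is used, and it is the only delicate step, the rest being the symmetric mirror of the left-tail estimate already established in Lemma \ref{super-solu-lm}.
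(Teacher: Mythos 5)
Your proposal is correct and takes exactly the route the paper intends: the paper's own proof of this lemma is a one-line citation of the arguments of Lemma \ref{super-solu-lm}, and your write-up (constant piece by direct substitution, left tail verbatim from Lemma \ref{super-solu-lm} with $(\bar\theta,\bar x,\bar r)$ in place of $(\theta_1,x_1,r_1)$, right tail as the mirror image in which the dangerous term switches from the $\int_{-\infty}^x$ integral to the $\int_x^\infty$ integral) is precisely that argument carried out in full. In particular, your use of $u\le U_2^+$ on the relevant half-lines and the final inequality $\frac{\chi\mu(\tilde\theta-\sqrt\nu)}{2(b-\chi\mu)(\tilde\theta+\sqrt\nu)}\le\frac{\chi\mu}{2(b-\chi\mu)}\le 1$, which is where $b\ge\frac{3\chi\mu}{2}$ enters, match the paper's mechanism exactly.
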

\begin{proof}
It can be proved by the similar arguments as those used in the proof of Lemma \ref{super-solu-lm}.
\end{proof}

Consider
\begin{equation}
\label{aux-new-new-new-eq01}
\begin{cases}
u_t=u_{xx}+c u_x-A(t,x)u_x+u (r(x) -B(t,x)-(\bar b-\chi\mu)u), \quad -L<x<L\cr
u(t,-L)=u(t,L)=0,
\end{cases}
\end{equation}
where $A(t,x)$ and $B(t,x)$ are both globally H\"older continuous in $t\in\R$ and
$x\in [-L,L]$ with H\"older exponent $0<\alpha<1$ and $\|A(\cdot,\cdot)\|_\infty<\infty$, $\|B(\cdot,\cdot)\|_\infty<\infty$.

\smallskip


\begin{lem}
\label{new-new-lm1}
Suppose that {\bf (H2)} holds.  Then  there are $L^*>0$ and $\eta=\eta(r(\cdot),c)>0$  such that for any $L\ge L^*$, any $A(\cdot,\cdot)$, $B(\cdot,\cdot)$ with
 $\|A(\cdot,\cdot)\|_\infty<\eta$, $\|B(\cdot,\cdot)\|_\infty<\eta$, and any $\bar b>\chi\mu$,
 \eqref{aux-new-new-new-eq01} has a  unique positive bounded entire solution
$u^*(t,x; \bar b, A(\cdot,\cdot), B(\cdot,\cdot))$ with
\begin{equation}
\label{aux-new-new-new-eq2}
\inf_{t\in\R, -L+\delta\le x\le L-\delta, \|A(\cdot,\cdot)\|_\infty<\eta, \|B(\cdot,\cdot)\|_\infty<\eta} u^*(t,x; \bar b, A(\cdot,\cdot), B(\cdot,\cdot))>0\quad \forall\,\, 0<\delta<L.
\end{equation}
\end{lem}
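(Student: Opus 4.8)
The plan is to treat \eqref{aux-new-new-new-eq01} as a nonautonomous KPP-type equation on $(-L,L)$ with Dirichlet boundary conditions, and to combine the sub/super-solution method (existence) with the positivity of the principal Lyapunov exponent of the linearization at $u=0$ (positivity and the lower bound) and a part-metric contraction (uniqueness, as in Lemma \ref{u-star-star-lm2} and \cite{KoSh}). To fix the constants, recall that $\lambda_L(r(\cdot),c)$ is nondecreasing in $L$ with $\lim_{L\to\infty}\lambda_L(r(\cdot),c)=\lambda_\infty(r(\cdot),c)>0$ by {\bf (H2)}; hence there is $L^*>0$ with $\lambda_{L^*}(r(\cdot),c)>0$, and $\lambda_L(r(\cdot),c)\ge\lambda_{L^*}(r(\cdot),c)>0$ for all $L\ge L^*$. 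Let $\lambda_{\mathrm{pr}}(L,A,B)$ denote the principal Lyapunov exponent of the linear nonautonomous operator $w\mapsto w_{xx}+(c-A(t,x))w_x+(r(x)-B(t,x))w$ on $(-L,L)$ with Dirichlet conditions, which reduces to $\lambda_L(r(\cdot),c)$ when $A\equiv B\equiv0$. By the Lipschitz dependence of the principal Lyapunov exponent on the first- and zeroth-order coefficients in the sup-norm, I would pick $\eta=\eta(r(\cdot),c)>0$ so small that $\lambda_{\mathrm{pr}}(L,A,B)\ge\tfrac12\lambda_{L^*}(r(\cdot),c)>0$ whenever $L\ge L^*$, $\|A\|_\infty<\eta$ and $\|B\|_\infty<\eta$. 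This robust positivity is uniform in $\bar b$, since the linearization at $0$ does not see $\bar b$, and it drives the whole argument.

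Write $\mathcal F(u)=u_{xx}+(c-A)u_x+u\big(r(x)-B-(\bar b-\chi\mu)u\big)$. For existence of a bounded entire solution I would use the constant super-solution $M=\frac{r^*+\eta}{\bar b-\chi\mu}$: since $\bar b>\chi\mu$ it satisfies $\mathcal F(M)=M\big(r(x)-B-(\bar b-\chi\mu)M\big)\le 0$ and $M\ge0$ on the boundary. Solving \eqref{aux-new-new-new-eq01} on $[-n,\infty)$ with datum $M$ at $t=-n$ produces $U_n$, and the comparison principle gives $0\le U_{n+1}\le U_n\le M$; the monotone pullback limit $u^*(t,x)=\lim_{n\to\infty}U_n(t,x)$, together with interior parabolic estimates, is a bounded entire solution with $0\le u^*\le M$.

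The positivity of $u^*$ and the uniform lower bound \eqref{aux-new-new-new-eq2} are where I expect the real difficulty, and the sign-indefinite transport term $A(t,x)u_x$ is the main obstacle. The naive stationary sub-solution $\varepsilon\phi_L$ built from the autonomous principal eigenfunction $\phi_L$ fails: the drift contributes $-A\,\partial_x\phi_L$ to $\mathcal F(\varepsilon\phi_L)$, and since $\partial_x\phi_L$ does not vanish where $\phi_L$ does, no smallness of $\eta$ can offset $A\,\partial_x\phi_L/\phi_L$ near the zero set when $A$ has an unfavorable sign. I would instead use a genuinely nonautonomous sub-solution $\underline u(t,x)=\varepsilon\,\phi(t,x)$, where $\phi(t,x)>0$ is the principal (Lyapunov) eigenfunction of the linearized operator supplied by the exponential-separation theory for nonautonomous parabolic equations and vanishing at $x=\pm L$. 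Because $\phi$ solves the linear equation, all first- and zeroth-order terms (the drift included) cancel and one is left with $(\underline u)_t-\mathcal F(\underline u)=-\varepsilon\phi\big(\lambda_{\mathrm{pr}}-(\bar b-\chi\mu)\varepsilon\phi\big)\le 0$ for $\varepsilon$ small, a genuine sub-solution matching the Dirichlet data. Comparison with the $U_n$ (noting $U_n(-n,\cdot)=M\ge\varepsilon\phi(-n,\cdot)$) gives $u^*\ge\varepsilon\phi>0$; the robustness of $\phi$ under the small perturbations $A,B$, i.e. a uniform positive interior lower bound for $\phi$, then yields \eqref{aux-new-new-new-eq2}. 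Alternatively, \eqref{aux-new-new-new-eq2} can be reached by contradiction and compactness: time-shift a hypothetical minimizing sequence, extract a nonnegative bounded entire limit $\tilde u$ of a limiting equation of the same form that touches $0$ at an interior point, conclude $\tilde u\equiv0$ by the strong maximum principle, and contradict $\lambda_{\mathrm{pr}}>0$ of the limiting linearization.

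For uniqueness I would argue as in Lemma \ref{u-star-star-lm2}: any two positive bounded entire solutions that are bounded away from $0$ on compact sets can be compared through the part metric, and the strict sublinearity of $u\mapsto u\big(r(x)-B-(\bar b-\chi\mu)u\big)$ forces the part metric to be strictly decreasing along the flow (the contraction of \cite[Proposition 3.4]{KoSh}), so the two solutions must coincide. The crux, as flagged, is the loss of the elementary stationary sub-solution caused by the sign-indefinite drift $A(t,x)u_x$; everything else is a robustness-of-principal-Lyapunov-exponent and monotone-dynamics package.
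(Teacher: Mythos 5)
You have the right overall architecture --- it is the same one the paper's proof is built on (the paper simply defers to \cite[Lemma 3.2]{ShXu}): a constant super-solution $\tfrac{r^*+\eta}{\bar b-\chi\mu}$ with a monotone pullback limit for existence, a positive sub-solution vanishing at $x=\pm L$ for positivity and for \eqref{aux-new-new-new-eq2}, and a part-metric argument for uniqueness --- and your diagnosis that the naive sub-solution $\varepsilon\phi_L$ is destroyed by the drift term $A\,\partial_x\phi_L$ near the Dirichlet boundary is exactly right. The genuine gap is in your replacement for it. For coefficients $A(t,x),B(t,x)$ that are merely bounded and H\"older in $t$ (no periodicity or recurrence is assumed), exponential-separation theory does \emph{not} produce a positive function $\phi(t,x)$, vanishing at $\pm L$, satisfying $\phi_t=\phi_{xx}+(c-A)\phi_x+(r-B)\phi-\lambda_{\rm pr}\phi$ with a \emph{constant} $\lambda_{\rm pr}$. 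It produces a positive entire solution $w$ of the linear equation whose normalization satisfies that equation with a time-dependent scalar rate $\kappa(t)$ in place of $\lambda_{\rm pr}$; only long-time averages of $\kappa$ lie in the principal spectrum interval, and $\kappa(t)$ can be negative on time intervals even when the whole principal spectrum is positive. Hence your identity $(\underline u)_t-\mathcal F(\underline u)=-\varepsilon\phi\,\bigl(\lambda_{\rm pr}-(\bar b-\chi\mu)\varepsilon\phi\bigr)$ is false in general, and $\varepsilon\phi$ need not be a sub-solution at any given time. The two auxiliary claims you lean on have the same status: the ``Lipschitz dependence of the principal Lyapunov exponent on the first-order coefficient'' in sup-norm is not an off-the-shelf fact (for the zeroth-order coefficient it is trivial; for the drift it is essentially equivalent to the estimate you are missing), and your fallback compactness argument ends by ``contradicting $\lambda_{\rm pr}>0$ of the limiting linearization,'' which requires precisely the quantitative instability-of-zero statement that is in question.

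The elementary repair --- which stays at the level of the autonomous eigenvalue problem, as does the argument the paper cites --- is to keep $\phi_L$ but take a power: $w=\phi_L^{1+\beta}$ with $\beta>0$ small. Differentiating the power creates the term $(1+\beta)\beta\,\phi_L^{\beta-1}(\phi_L')^2$, which absorbs the dangerous drift contribution pointwise by Young's inequality,
\[
(1+\beta)\,|A|\,\phi_L^{\beta}\,|\phi_L'|\ \le\ \tfrac{(1+\beta)\beta}{2}\,\phi_L^{\beta-1}(\phi_L')^2+\tfrac{1+\beta}{2\beta}\,A^2\,\phi_L^{1+\beta},
\]
so that, using $\phi_L''+c\phi_L'+r\phi_L=\lambda_L\phi_L$ and $\lambda_L\ge\lambda_{L^*}$,
\[
w''+(c-A)w'+(r-B)w\ \ge\ \Bigl[(1+\beta)\lambda_{L}-\beta r^*-\eta-\tfrac{1+\beta}{2\beta}\,\eta^2\Bigr]\,w\ \ge\ \tfrac{\lambda_{L^*}}{2}\,w
\]
once $\beta$ is fixed with $\beta r^*\le\lambda_{L^*}/4$ and then $\eta$ with $\eta+\tfrac{1+\beta}{2\beta}\eta^2\le\lambda_{L^*}/4$ (this is exactly how $\eta=\eta(r(\cdot),c)$ gets chosen). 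Consequently $\underline u=\varepsilon w$ is a stationary sub-solution of \eqref{aux-new-new-new-eq01} for every $0<\varepsilon\le\lambda_{L^*}/\bigl(2(\bar b-\chi\mu)\|w\|_\infty\bigr)$, for every $L\ge L^*$ and \emph{every} admissible pair $(A,B)$ simultaneously; it vanishes at $x=\pm L$, so comparison with your pullback approximants is legitimate, its positivity on $[-L+\delta,L-\delta]$ is independent of $(A,B)$ and $t$, and that uniformity is what yields \eqref{aux-new-new-new-eq2}. With this single substitution, the rest of your outline (constant super-solution, pullback limit, part-metric uniqueness as in Lemma \ref{u-star-star-lm2} and \cite{KoSh}) goes through.
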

\begin{proof}
It can be proved by the similar arguments as those used in the proof of \cite[Lemma 3.2]{ShXu}.
\end{proof}

\smallskip

Let $L=L^*$ and $\eta$ be fixed. For every $u\in\mathcal{E}_2^+$, let $A(t,x)=\chi\Psi_{x}(x;u)$, $B(t,x)=\chi\nu\Psi(x;u)$. By Lemma \ref{V-estimat}, $\Psi_{x}(x;u)$ and $\Psi_{xx}(x;u)$ are both bounded for any $u\in\mathcal{E}_2^+$, we then have $A(t,x)$ and $B(t,x)$ are both globally H\"older continuous in $x\in [-L,L]$.

  In the following, we assume that
  \begin{equation}
  \label{chi-cond-eq2}
  0<\chi<\chi_0=\chi_0(r(\cdot),c):=\min\{\frac{2\sqrt{\nu}\eta b}{\mu r^*+2\sqrt{\nu}\eta\mu}, \frac{b\eta}{\mu r^*+\mu\eta}\}.
  \end{equation}
  Then by Lemma \ref{V-estimat}, we have
$$
\|A(\cdot,\cdot)\|_\infty\leq \frac{\chi\mu r^*}{2\sqrt{\nu}(b-\chi\mu)}<\eta
\quad {\rm and}\quad
\|B(\cdot,\cdot)\|_\infty\leq \frac{\chi\mu r^*}{b-\chi\mu}<\eta.
$$
It then follows from Lemma \ref{new-new-lm1} that \eqref{aux-new-new-new-eq01} with $A(t,x)=\chi\Psi_{x}(x;u)$ and $B(t,x)=\chi\nu\Psi(x;u)$ has a unique positive bounded entire solution $u^*(t,x; \bar b, u){:=u^*(t,x;  \bar b, \chi\Psi_{x}(\cdot;u), \chi\nu\Psi(\cdot;u))}$ with
\begin{equation}
\label{aux-new-new-new-eq3}
\inf_{t\in\R, -L+\delta\le x\le L-\delta} u^*(t,x; \bar b, u)>0\quad \forall\,\, 0<\delta<L.
\end{equation}
{Note that, by the comparison principle for parabolic equations,
$$
u^*(t,x;\bar b,u)\le \frac{r^*}{\bar b-\chi\mu}\quad \forall\, t\in\R,\,\, -L\le x\le L.
$$}
Fix $\bar b \gg b$ such that $u^*(t,x;\bar b, u)<U_2^+(x)$ for any $-L\leq x\leq L$, any $t\in\R$, {any $u\in
\mathcal{E}_2^+$}.
By the proof of \cite[Lemma 3.2]{ShXu}, we have that
\begin{equation}
\label{aux-new-new-new-eq4-1}
\inf_{t\in\R, -L+\delta\le x\le L-\delta, u\in\mathcal{E}_2^+} u^*(t,x; \bar b, u)>0\quad \forall\,\, 0<\delta<L.
\end{equation}

Define
$$
U_2^{-}(x)=\begin{cases}
\inf_{t\in\R, u\in\mathcal{E}_2^+}u^*(t,x; \bar b,  u) \quad  &{\rm if}\,\, -L< x<L,\cr
0 \quad  &{\rm if}\,\,  x\ge L,\, x\le -L\cr
\end{cases}
$$
Then, $U_2^{-}(x)\not\equiv 0$  and $\inf_{-L+\delta\le x\le L-\delta}U_2^{-}(x)>0$, and  $U_2^{-}(x)<U_2^{+}(x)$ for any $x\in\R$.

\begin{lem}
\label{new-add-lm2}
For any  $u\in\mathcal{E}_2^+$,
$$
U_2^-(x)<U(t,x;u) \quad \forall \,\ x\in\R,\ t>0,
$$
where $U(t,x;u)$ is the solution of the following parabolic equation
\begin{equation}\label{U-def3}
\begin{cases}
U_t=\mathcal{A}_{u}(U),\quad\ t>0,\,\,  x\in\R\cr
U(0,x;u)=U_2^{+}(x), \,\,  x\in\R.
\end{cases}
\end{equation}
\end{lem}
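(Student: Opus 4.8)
The plan is to run a comparison argument for the parabolic problem \eqref{U-def3}, taking as sub-solution the very entire solution $u^*(\cdot,\cdot;\bar b,u)$ out of which $U_2^-$ is built, evaluated at the \emph{same} profile $u\in\mathcal{E}_2^+$ that determines the coefficients of $\mathcal{A}_u$. The key point is that, by its definition as an infimum, $U_2^-(x)\le u^*(t,x;\bar b,u)$ for every $t\in\R$; hence it suffices to show $u^*(t,x;\bar b,u)<U(t,x;u)$ on $(-L,L)$ for $t>0$ and to dispose of the exterior region $|x|\ge L$ by positivity of $U$.

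First I would verify the sub-solution property. Fix $u\in\mathcal{E}_2^+$ and set $w(t,x)=u^*(t,x;\bar b,u)$ on $-L\le x\le L$. By construction (see the paragraph following Lemma \ref{new-new-lm1}), $w$ solves \eqref{aux-new-new-new-eq01} with $A(t,x)=\chi\Psi_x(x;u)$ and $B(t,x)=\chi\nu\Psi(x;u)$, which are exactly the coefficients appearing in $\mathcal{A}_u$. Subtracting the two operators, the only discrepancy is the damping coefficient $\bar b-\chi\mu$ against $b-\chi\mu$, so
\begin{equation*}
\mathcal{A}_u(w)-w_t=(\bar b-b)\,w^2\ge 0\qquad\text{on }(-L,L),
\end{equation*}
since $\bar b>b$ and $w\ge 0$. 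Thus $w_t\le\mathcal{A}_u(w)$, i.e. $w$ is a classical sub-solution of \eqref{U-def3} inside the slab. Extending $w$ by $0$ for $|x|\ge L$ keeps it a generalized sub-solution on all of $\R$: the values match at $x=\pm L$, and at each junction the one-sided derivatives do not decrease (at $x=L$ the Hopf lemma gives $w_x(L^-)<0=w_x(L^+)$, and symmetrically $w_x(-L^-)=0<w_x(-L^+)$), which is the admissible corner for a sub-solution.

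Next I would compare $w$ with the solution $U(\cdot,\cdot;u)$ of \eqref{U-def3}. Both functions are bounded ($U\le U_2^+$ by Lemma \ref{super-solu-lm2} and $0\le w\le \frac{r^*}{\bar b-\chi\mu}$), and by the choice of $\bar b$ one has $w(0,\cdot)=u^*(0,\cdot;\bar b,u)<U_2^+=U(0,\cdot;u)$ on $\R$. The comparison principle for parabolic equations then gives $w(t,x)\le U(t,x;u)$ for all $t\ge 0$, $x\in\R$. Writing $z=U-w\ge 0$, one checks that $z$ satisfies a linear parabolic differential inequality with bounded coefficients on $(-L,L)$ with $z(0,\cdot)>0$, so the strong maximum principle upgrades this to $w(t,x)<U(t,x;u)$ for every $t>0$ and $x\in(-L,L)$. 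Combining with the infimum bound $U_2^-(x)\le w(t,x)$ yields $U_2^-(x)<U(t,x;u)$ throughout $(-L,L)$. For $|x|\ge L$ we have $U_2^-(x)=0$, while $U(t,x;u)>0$ for all $t>0$ (the datum $U_2^+$ is strictly positive and positivity propagates by the strong maximum principle), completing the strict inequality everywhere.

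I expect the genuinely delicate points to be two bookkeeping steps rather than any new estimate: justifying that the zero-extension of $w$ is a legitimate sub-solution across the Dirichlet corners $x=\pm L$, so that the comparison principle applies on the whole line, and extracting strictness from the strong maximum principle uniformly up to the edges of the slab. The algebraic heart—that enlarging the logistic damping from $b$ to $\bar b$ converts the auxiliary profile into a sub-solution of the target operator—is immediate, and the infimum defining $U_2^-$ costs nothing, an infimum lying below each of its members.
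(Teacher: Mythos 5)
Your proposal is correct and follows essentially the same route as the paper: the heart in both is the observation that $\mathcal{A}_u(u^*)-u^*_t=(\bar b-b)(u^*)^2\ge 0$, so the auxiliary entire solution $u^*(\cdot,\cdot;\bar b,u)$ is a sub-solution of \eqref{U-def3}, followed by the comparison principle and the trivial bound $U_2^-\le u^*$ coming from the infimum. The only difference is bookkeeping: the paper compares on the slab $(-L,L)$ directly, using $u^*(t,\pm L)=0\le U(t,\pm L;u)$ on the lateral boundary, which sidesteps your zero-extension and the corner (Hopf-lemma) discussion entirely.
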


\begin{proof} Observe that
\begin{align}
&u^*_{xx} +(c -\chi\Psi_x(x;u))u^*_x+(r(x)-\chi\nu\Psi(x;u)-(b-\chi\mu)u^*)u^*-u^*_t\cr
&=(\bar b-b)u{^*}^2>0 \quad \forall\, -L<x<L.
\end{align}
Hence, by the comparison principle for parabolic equations, we get that
\begin{equation}\label{lower-bound of U}
u^*(t,x;\bar b, u)< U(t,x;u),\quad \quad \forall \,\,-L<x<L,\ t>0, \ u\in\mathcal{E}_2^+.
\end{equation}
This implies that
$$
\inf_{t\in\R, u\in\mathcal{E}_2^+}u^*(t,x;\bar b, u)<U(t,x;u),\quad \forall \,\,-L<x<L,\ t>0, \ u\in\mathcal{E}_2^+.
$$
The lemma then follows.
\end{proof}

Note that, by Lemma \ref{super-solu-lm2} and the comparison principle for parabolic equations,
$$
U(t_2,x;u)\leq U(t_1,x;u)\leq U_2^+(x),\quad \forall \,\, x\in\R,\ 0<t_1<t_2, \ u\in\mathcal{E}_2^{+}.
$$
Thus the function
\begin{equation}\label{U-main-def2}
U_2^*(x;u)=\lim_{t\to\infty}U(t,x;u),\quad \forall\ u\in\mathcal{E}_2^{+}
\end{equation}
is well defined, and
\begin{equation}\label{U-upper}
U_2^-(x) \leq U_2^*(x;u)\leq U_2^+(x),\quad \forall \,\, x\in\R,\ u\in\mathcal{E}_2^{+}.
\end{equation}

Let
$$
\mathcal{E}_2=\{u\in C^b_{\rm unif}(\R)\ :\  U_2^-(x)\leq u(x)\leq U_2^{+}(x), \ \forall\ x\in\R\}.
$$
For any $u\in \mathcal{E}_2$, it follows from \eqref{U-upper} that $U_2^*(\cdot;u)\in \mathcal{E}_2$.
Moreover, by a priori estimates for parabolic equation, we have that $U_2^*(x;u)$
satisfies
\begin{equation}\label{U-elliptic-eq2}
0=U_{xx} +(c -\chi\Psi_x(x;u))U_x+(r(x)-\chi\nu\Psi(x;u)-(b-\chi\mu)U)U\quad \forall\,\ x\in\R.
\end{equation}
{Since $U_2^-(x)\ge 0$ for any $x\in\R$ and $U_2^-(x)\not\equiv 0$ for any $x\in\R$, it follows from the comparison principle for parabolic equations that
\begin{equation}\label{U-positive}
U_2^*(x;u)>0 \quad \forall\,\ x\in\R, \,\ u\in\mathcal{E}_2^+.
\end{equation}}

\begin{lem}
\label{uniqueness-of-U-lm2}
For any given $u\in\mathcal{E}_2$,
\eqref{U-elliptic-eq2} has a unique solution $U_2^*(\cdot;u)\in\mathcal{E}_2$.
\end{lem}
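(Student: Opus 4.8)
The plan is to adapt the sliding argument of Lemma \ref{u-star-star-lm2} to the present pulse setting, exploiting the fact that here the profile decays to $0$ at \emph{both} ends. First I would record the structural facts already available: by construction $U_2^*(\cdot;u)=\lim_{t\to\infty}U(t,x;u)$ is the monotone limit from above of the solution of \eqref{U-def3} started at $U_2^+$, it is positive by \eqref{U-positive} and lies in $\mathcal{E}_2$ by \eqref{U-upper}, and $U_2^*(x;u)\to 0$ as $x\to\pm\infty$ since $0\le U_2^*\le U_2^+$. Given any other solution $U_{2*}(\cdot;u)\in\mathcal{E}_2$ of \eqref{U-elliptic-eq2}, the bound $U_2^-\le U_{2*}\le U_2^+$ together with the strong maximum principle forces $U_{2*}>0$ on $\R$ and $U_{2*}(x)\to 0$ as $x\to\pm\infty$; moreover, since $U_{2*}\le U_2^+=U(0,\cdot;u)$ and $U_{2*}$ is a stationary solution, the comparison principle gives $U_{2*}(x)\le U(t,x;u)$ for all $t>0$, whence $U_{2*}\le U_2^*$. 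It therefore suffices to prove $U_{2*}\equiv U_2^*$.

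For the second step I would run the $k_\epsilon$ scheme of Lemma \ref{u-star-star-lm2}. Fix $\epsilon>0$, set $K_\epsilon=\{k\ge 1: kU_{2*}(x)\ge U_2^*(x)-\epsilon\ \forall x\in\R\}$ and $k_\epsilon=\inf K_\epsilon$. Here $K_\epsilon\neq\emptyset$ is immediate and requires no ratio estimate at infinity: since $U_2^*(x)\to 0$, the quantity $U_2^*(x)-\epsilon$ is negative for $|x|$ large, so the defining inequality holds automatically there, while on the remaining compact set $U_{2*}$ is bounded below by a positive constant and $U_2^*$ is bounded, so any large $k$ works. As in Case 1, $k_\epsilon\ge1$ is nonincreasing in $\epsilon$, and if $k_\epsilon=1$ for all small $\epsilon$ then letting $\epsilon\to0$ gives $U_{2*}\ge U_2^*$, hence $U_{2*}\equiv U_2^*$ and we are done. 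So assume for contradiction that $k_{\epsilon_0}>1$ for some $\epsilon_0$; then $k_\epsilon\ge k_{\epsilon_0}>1$ for $0<\epsilon\le\epsilon_0$, and choosing $\delta>0$ with $k_\epsilon-\delta>1$, the same observation that the inequality holds automatically for large $|x|$ confines any violation of $(k_\epsilon-\delta)U_{2*}\ge U_2^*-\epsilon$ to a fixed compact set, so letting $\delta\to0$ produces a touching point $x_\epsilon$ with $k_\epsilon U_{2*}(x_\epsilon)=U_2^*(x_\epsilon)-\epsilon$ and $k_\epsilon U_{2*}-U_2^*\ge-\epsilon$ on $\R$.

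The decisive simplification relative to Case 1 is the localization of $x_\epsilon$. At $x_\epsilon$ the function $k_\epsilon U_{2*}-U_2^*$ attains its minimum, so its first derivative vanishes and its second derivative is nonnegative; feeding this into \eqref{U-elliptic-eq2} for both $U_{2*}$ and $U_2^*$ and using $U_{2*}\le U_2^*$ exactly as in Lemma \ref{u-star-star-lm2} yields $-\epsilon\big(r(x_\epsilon)-\chi\nu\Psi(x_\epsilon;u)-(b-\chi\mu)U_2^*(x_\epsilon)\big)\le 0$, hence $r(x_\epsilon)\ge \chi\nu\Psi(x_\epsilon;u)+(b-\chi\mu)U_2^*(x_\epsilon)\ge 0$. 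In Case 2 the set $\{x:r(x)\ge0\}$ is bounded (because $r(\pm\infty)<0$), so this bounds $x_\epsilon$ both above and below simultaneously, with no auxiliary ratio lemma required. I would then extract $\epsilon_n\to0$ with $x_{\epsilon_n}\to x^*$ and $k_{\epsilon_n}\to k^*\ge k_{\epsilon_0}>1$, obtaining $k^*U_{2*}(x^*)=U_2^*(x^*)$ and $k^*U_{2*}\ge U_2^*$ on $\R$. Since $k^*>1$, a direct substitution into \eqref{U-elliptic-eq2} shows that $k^*U_{2*}$ is a strict supersolution, the left side of \eqref{U-elliptic-eq2} applied to $k^*U_{2*}$ being $-(b-\chi\mu)k^*(k^*-1)U_{2*}^2<0$; comparing with the solution $U_2^*$, which touches it from below at $x^*$, the strong maximum principle forces either $k^*U_{2*}\equiv U_2^*$, incompatible with $k^*>1$ and $U_{2*}>0$, or else $k^*U_{2*}>U_2^*$ strictly, contradicting the touching at $x^*$. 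This contradiction rules out $k_{\epsilon_0}>1$ and gives $U_{2*}\equiv U_2^*$. The main obstacle to watch is the passage to the touching point: one must verify that the violation set stays compact uniformly as $\delta\to0$ so that $x_\epsilon$ is well defined, and then that the sign condition $r(x_\epsilon)\ge0$ keeps it inside the fixed favorable region, which is precisely what makes the limits $x^*$ and $k^*$ available.
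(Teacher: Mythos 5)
Your proposal is correct and follows essentially the same route as the paper: the paper's proof likewise sets up the sliding quantity $k_\epsilon=\inf\{k\ge 1\,:\, kU_2(x;u)\ge U_1(x;u)-\epsilon\ \forall x\in\R\}$, notes that $K_\epsilon\neq\emptyset$ precisely because both profiles decay to $0$ at $\pm\infty$, and then runs the arguments of Lemma \ref{u-star-star-lm2} (touching point $x_\epsilon$, the sign condition $r(x_\epsilon)\ge 0$ which in {\bf Case 2} confines $x_\epsilon$ to a compact set, passage to the limits $k^*>1$, $x^*$, and a comparison contradiction) to conclude $k_\epsilon=1$ for $0<\epsilon\ll 1$. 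The only cosmetic differences are that the paper compares two arbitrary solutions in $\mathcal{E}_2$ and finishes by symmetry, whereas you compare an arbitrary solution against the maximal one $U_2^*$ (first deriving $U_{2*}\le U_2^*$ by parabolic comparison, which also supplies the ordering used at the touching point), and you close with an elliptic strict-supersolution contradiction, computing $\mathcal{A}_u(k^*U_{2*})=-(b-\chi\mu)k^*(k^*-1)U_{2*}^2<0$, rather than the parabolic strong comparison invoked in Lemma \ref{u-star-star-lm2}; both endgames are valid.
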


\begin{proof}
Let $U_1(x;u)$, $U_2(x;u)$ be two solutions of \eqref{U-elliptic-eq2} in $\mathcal{E}_2$. Note that $U_i(x;u)>0$ for $x\in\R$ and every $i=1,2$. For any $\epsilon>0$, let
$$
K_\epsilon=\{k\ge 1\,|\, kU_2(x;u)\ge U_1(x;u)-\epsilon\quad \forall \, x\in\R\}.
$$
$K_\epsilon$ is not empty because
\begin{equation}\label{quotient-lim}
\lim_{x\to\pm \infty}\frac{U_1(x;u)-\epsilon}{U_2(x;u)}=-\infty.
\end{equation}
Let
$k_\epsilon=\inf K_\epsilon.
$
Then $k_\epsilon\ge 1$ and
$$
k_\epsilon U_2(x;u)\ge U_1(x;u)-\epsilon\quad \forall\, x\in\R.
$$
Note that $k_\epsilon$ is nonincreasing in $\epsilon>0$. Following the similar arguments as those used in the proof of Lemma \ref{u-star-star-lm2}, we have $k_\epsilon=1$ for $0<\epsilon\ll 1$. Therefore,
$${
U_2(x;u)\ge U_1(x,u)\quad \forall\, x\in\R. }
$$
Similarly, we have
$${
U_1(x;u)\ge U_2(x,u)\quad \forall\, x\in\R.}
$$
The lemma thus follows.
\end{proof}

We now prove Theorem \ref{forced-wave-thm2}.

\begin{proof}[Proof of Theorem \ref{forced-wave-thm2}]
Consider the mapping  $U_2^*(\cdot;\cdot): \mathcal{E}_2\ni u \mapsto U_2^*(x;u)\in\mathcal{E}_2$ as defined by \eqref{U-main-def2}. It follows from the arguments of  \cite[Theorem  3.1 ]{SaSh2} {and  Lemma
\ref{uniqueness-of-U-lm2} }that this function is continuous
and compact in the compact open topology. Hence it has a fixed point $u^*$ by the Schauder's fixed point Theorem. Taking $v^*(x)=\Psi(x;u^*)$, we have from \eqref{U-elliptic-eq2}, that $(u(t,x),v(t,x))=(u^*(x-ct),v^*(x-ct))$ is an entire solution of \eqref{Keller-Segel-eq0}. Moreover, {by \eqref{U-positive}, $u^*(x)>0$ for all $x\in\R$.} Since  $U_2^{-}(x) \leq u^*(x)\leq U_2^{+}(x)$,  it follows that
 $\lim_{x\to \pm\infty}u^*(x)=0.$
 The theorem is thus proved.
\end{proof}

\section{Numerical Simulations}

In this section, we present some numerical investigation on the existence of forced wave solutions. All the numerical simulations were conducted using programming software MATLAB.

\subsection{Numerical simulations  in Case 1}

In this subsection, we present some numerical simulations in Case 1 by the finite difference method. It should be pointed out that
the authors in
  \cite{YaBa} provided some numerical study for the vanishing and spreading dynamics of chemotaxis systems with logistic source and a free boundary by the finite difference method.

First, we describe the numerical scheme. To numerically investigate the existence of forced wave solutions in Case 1, we use the finite difference method to  compute the solution of
\begin{equation}\label{wave-cut-off-eq1-1}
\begin{cases}
u_t= u_{xx}+cu_x- (\chi u  v_x)_x+u(r(x)-bu),\quad -L<x<L\cr
0 =v_{xx}-  \nu v +\mu u,\quad -L<x<L\cr
u(0,x)=u_0(x), \quad -L\leq x\leq L\cr
u(t,-L)=v(t,-L)=0\cr
\frac{\partial u}{\partial x}(t,L)=\frac{\partial v}{\partial x}(t,L)=0
\end{cases}
\end{equation}
for reasonable  large $L>1$,
where $u_0(x)$ is piece-wise linear,
$
u_0(x)=\begin{cases}
0 \quad  &{\rm if}\,\,  x\leq -1,\cr
\frac{r^*}{2b}x+\frac{r^*}{2b}  \quad  &{\rm if}\,\,  -1<x<1,\cr
\frac{r^*}{b}                   \quad &{\rm if}\,\,  x\geq 1.
\end{cases}
$

\smallskip

Let $(u_L(t,x;u_0),v_L(t,x;u_0))$ be the solution of \eqref{wave-cut-off-eq1-1} with $u_L(0,x;u_0)=u_0(x)$.
Observe that if $(u_L(x),v_L(x)):=\lim_{t\to \infty}(u_L(t,x;u_0),v_L(t,x;u_0))$ exists, then $(u_L(x),v_L(x))$ is a stationary solution of
\eqref{wave-cut-off-eq1-1}. If $(u_\infty(x),v_\infty(x)):=\lim_{L\to\infty} (u_L(x),v_L(x))$ exists, and $u_\infty(-\infty)=0$ and $u_\infty(\infty)=\frac{r^*}{b}$, then $(u_\infty(x),v_\infty(x))$ is a forced wave solution of \eqref{Keller-Segel-eq0}
connecting $(\frac{r^*}{b},\frac{\mu r^*}{\nu b})$ and $(0,0)$.
We will then compute the numerical solution of \eqref{wave-cut-off-eq1-1}  on a reasonable large time interval $[0,T]$ for several choices of $L$.

\smallskip

Note that, by the second equation in \eqref{wave-cut-off-eq1-1},
$v_{xx}=\nu v-\mu u.$
Hence the first equation in \eqref{wave-cut-off-eq1-1} can be written as
\begin{equation}\label{wave-cut-off-eq1-2-0}
u_t=u_{xx}+(c-\chi v_x)u_x +u(r(x)-\chi \nu v-(b-\chi\mu)u),\quad -L<x<L.
\end{equation}

To find the numerical solution of  \eqref{wave-cut-off-eq1-1} on the interval $[0,T]$,
we divide the space interval  $[-L,L]$ into $M$ subintervals with equal length and divide the time interval $[0,T]$ into $N$ subintervals with equal length. Then the space step size is $h=\frac{2L}{M}$ and the time step size is $\tau=\frac{T}{N}$. For simplicity, we denote the approximate value of $u(t_j, x_i)$ by $u(j,i)$, $r(x_i)$ by $r(i)$ and $v(t_j, x_i)$ by $v(j,i)$ with $t_{j}=(j-1)\tau$, $1 \leq j\leq N+1$, and $x_{i}=-L+(i-1)h$, $1 \leq i\leq M+1$.

Using  the central approximation for the second spatial derivative $v_{xx}(t_j,x_i)$,
$$
v_{xx}(t_{j}, x_{i}) \approx \frac{v(j,i-1)-2v(j,i)+v(j,i+1)}{h^2},
$$
the second equation in \eqref{wave-cut-off-eq1-1} can be discretized as
\begin{equation}\label{v-discre}
\frac{v(j,i-1)-2v(j,i)+v(j,i+1)}{h^2}-\nu v(j,i)+\mu u(j,i)=0, \quad 2\leq i\leq M.
\end{equation}
We use the backward approximation for the spatial derivative $\frac{\partial v}{\partial x}(t_j,x_{M+1})$,
$$
\frac{\partial v}{\partial x}(t_j, x_{M+1})\approx \frac{v(j,M+1)-v(j,M)}{h}.
$$
By $v(t,-L)=0$ and $\frac{\partial v}{\partial x}(t,L)=0$, we set
$
v(j,1)=0\quad {\rm and}\quad v(j,M+1)=v(j,M).
$

By \eqref{v-discre}, for each $j$, we have $M-1$ equations which form a system of linear algebraic equations for $(v(j,2), \cdots, v(j,M))$. It is nonsingular and there exists a unique solution $(v(j,2), \cdots, v(j,M))$.

By the forward approximation of the time derivative
$$
u_{t}(t_{j}, x_{i}) \approx \frac{u(j+1,i)-u(j,i)}{\tau},
$$
and the central approximation of the spatial derivative
$$
u_{xx}(t_{j}, x_{i}) \approx \frac{u(j,i-1)-2u(j,i)+u(j,i+1)}{h^2},
$$
$$
u_{x}(t_{j}, x_{i}) \approx \frac{u(j,i+1)-u(j,i-1)}{2h},
$$
equation \eqref{wave-cut-off-eq1-2-0} can be discretized as
\begin{align*}
\frac{u(j+1,i)-u(j,i)}{\tau}&=\frac{u(j,i-1)-2u(j,i)+u(j,i+1)}{h^2}\cr
&+(c-\chi \frac{v(j,i+1)-v(j,i-1)}{2h})\cdot\frac{u(j,i+1)-u(j,i-1)}{2h}\cr
&+u(j,i)\big(r(i)-\chi \nu v(j,i)-(b-\chi\mu)u(j,i)\big), \,\ 1\leq j\leq N,\quad 2\leq i\leq M.
\end{align*}
Simplify and reorder the above equations, we  get the following explicit formulas for $u(j+1,i)$ ($1\le j\le N$, $2\le i\le M$),
\begin{align*}
u(j+1,i)&=\big(\frac{\tau}{h^2}-\frac{\tau}{2h}(c-\chi \frac{v(j,i+1)-v(j,i-1)}{2h})\big)
u(j,i-1)\cr
&+\big(1-\frac{2\tau}{h^2}+\tau r(i)-\tau\chi\nu v(i)\big)u(j,i)-\tau(b-\chi\mu)u(j,i)^2\cr
&+\big(\frac{\tau}{h^2}+\frac{\tau}{2h}(c-\chi \frac{v(j,i+1)-v(j,i-1)}{2h})\big)u(j,i+1),\,\ 1\leq j\leq N,\quad 2\leq i\leq M.
\end{align*}
Similarly, by $u(t,-L)$ and $\frac{\partial u}{\partial x}(t,L)=0$, we set
$
u(j+1,1)=0 \quad {\rm and}\quad
 u(j+1,M+1)=u(j+1,M) \quad 1\leq j\leq N.
$
Thus, for each $j$, the values $u(j+1,i), 1 \leq i\leq M+1$ are obtained.

\smallskip

Next, we present our numerical simulations. We fix  the parameter values $\chi=0.1$, $\mu=1$, $\nu=0.05$,  and choose $r(x)$ to be the piece-wise linear function
$$
r(x)=\begin{cases}
-1 \quad  &{\rm if}\,\,  x\leq -8,\cr
11x+87 &{\rm if}\,\,  -8<x<-7,\cr
10                   \quad &{\rm if}\,\,  x\geq -7.\cr
\end{cases}
$$
For this choice of $r(x)$, $r^*=10$ and  $-c^*=-2\sqrt{r^*}\approx -6.325$.
We will do four numerical experiments for different values of $b$ and $c$.
{In these four numerical experiments, we will use the same space step size $h=0.1$ and the same time step size $\tau=0.002$. }

\smallskip

\noindent{\bf Numerical Experiment 1.}
Let $b=1$ and $c=1$. In this case,  $c>\frac{\chi\mu r^*}{2\sqrt{\nu}(b-\chi\mu)}-2\sqrt{\frac{r^*(b-2\chi\mu)}{b-\chi\mu}}$ becomes $c>\frac{5}{9\sqrt{0.05}}-2\sqrt{\frac{8}{0.9}}\approx -3.478$. So for these choices of $b$ and $c$, the assumption
{\bf (H1)} holds.

 We compute the numerical solution of \eqref{wave-cut-off-eq1-1} with $L=15, 20, 25, 30$, and $40$
on the time interval $[0,10]$. For all the choices of $L$, we observe that the numerical solution of \eqref{wave-cut-off-eq1-1}
changes very little after $t=3$, which indicates that the numerical solution converges to a stationary solution of \eqref{wave-cut-off-eq1-1}
as $t\to\infty$. We also observe that the numerical solution $u(t,x)$ at $t=10$ changes very little  and $u(10,L)$ is very close to
 $\frac{r^*}{b}=10$ as $L$ increases,
which indicates the stationary solution of \eqref{wave-cut-off-eq1-1} converges to a stationary solution of \eqref{wave-eq} connecting
$(\frac{r^*}{b},\frac{\mu}{\nu}\frac{r^*}{b})$ and $(0,0)$ or a forced wave solution of \eqref{Keller-Segel-eq0}  connecting
$(\frac{r^*}{b},\frac{\mu}{\nu}\frac{r^*}{b})$ and $(0,0)$ as $L\to\infty$, whose existence is proved in Theorem \ref{forced-wave-thm1}. Hence the numerical results for the choices $b=1$ and $c=1$ match the theoretical results.

We demonstrate the numerical solutions of \eqref{wave-cut-off-eq1-1} for the cases $L=20$ and $L=40$ in Figure \ref{Case1c1L20-1-1}  and
Figure \ref{Case1c1L40-1-1} , respectively.
Figure \ref{Case1c1L20-1} is the surface plot of the numerical solution of the system \eqref{wave-cut-off-eq1-1} on the interval $[-20,20]$ as time evolves. We plot the profile of the numerical solution at time $t=0,1,2,3,7,10$ in Figure \ref{Case1c1L20}.
Figure \ref{Case1c1L40-1} is the surface plot of the numerical solution of the system \eqref{wave-cut-off-eq1-1} on the interval $[-40,40]$ as time evolves. We plot the profile of the numerical solution at time $t=0,1,2,3,7,10$ in Figure \ref{Case1c1L40}.

\begin{figure}[H]
\centering
\subfigure[]
{\begin{minipage}{7cm}
	\centering
	\includegraphics[scale=0.17]{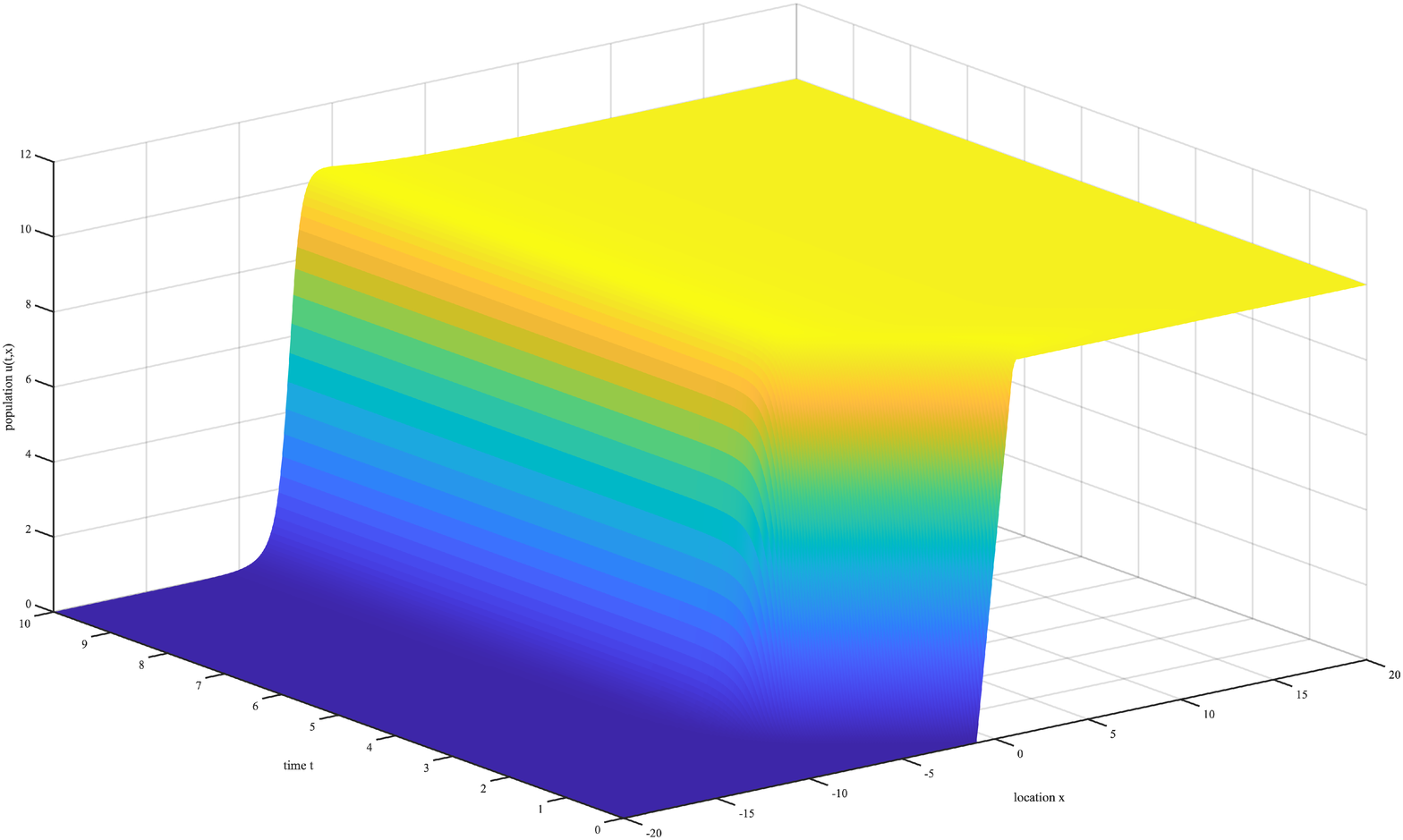}\label{Case1c1L20-1}
        \end{minipage}} \quad
	\subfigure[]
{\begin{minipage}{7cm}
	\centering
	\includegraphics[scale=0.30]{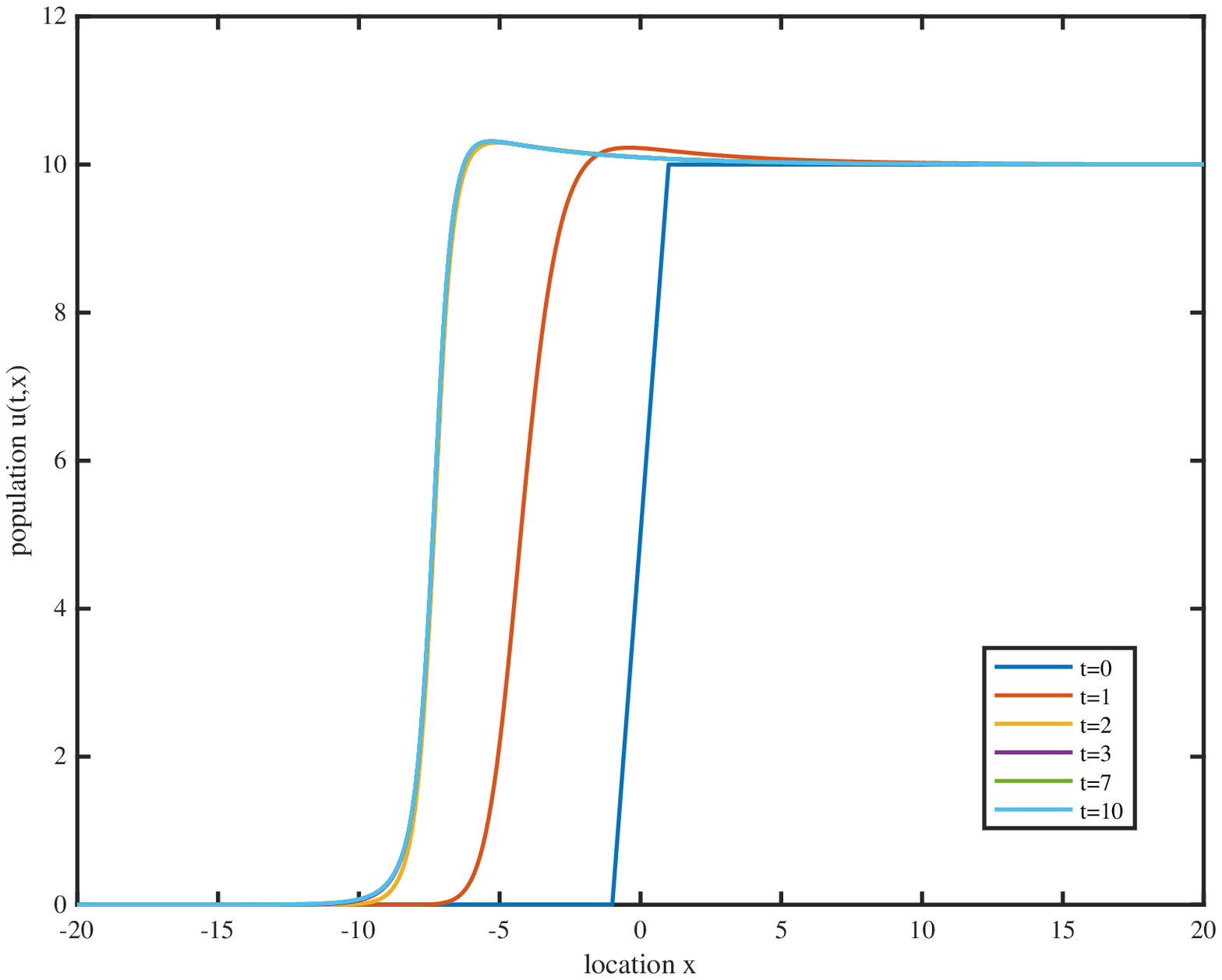}\label{Case1c1L20}
	\end{minipage}}
 \caption{{\bf (a)} Evolution of numerical solution of \eqref{wave-cut-off-eq1-1} on the interval $[-20,20]$ with $b=1$ and  $c=1$. {\bf (b)} numerical solution of \eqref{wave-cut-off-eq1-1} on the interval $[-20,20]$ at time $t=0,1,2,3,7,10$ with $b=1$ and $c=1$.}
\label{Case1c1L20-1-1}
\end{figure}

\begin{figure}[htbp]
\centering
\subfigure[]
{\begin{minipage}{7cm}
	\centering
	\includegraphics[scale=0.17]{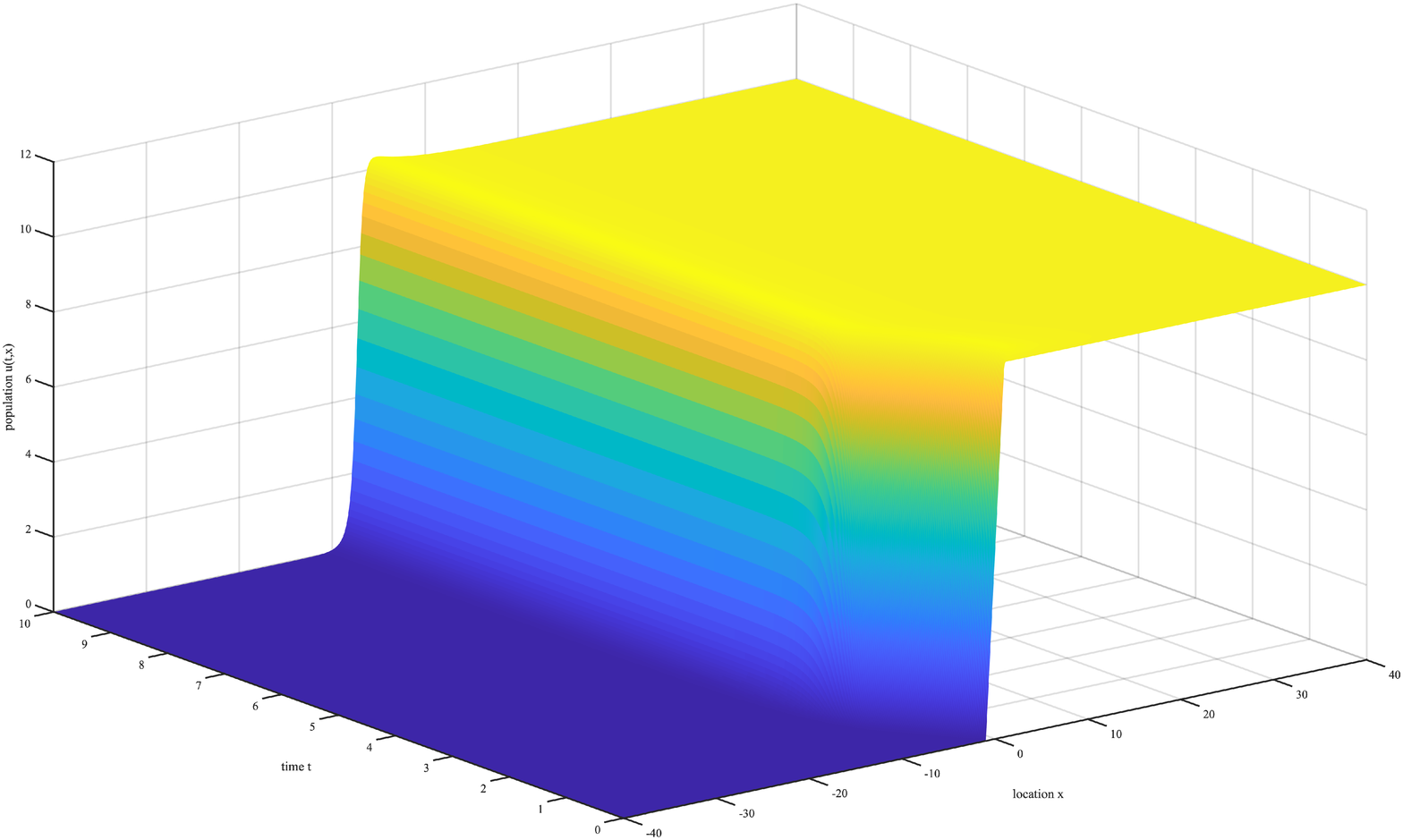}\label{Case1c1L40-1}
         \end{minipage}} \quad
	\subfigure[]
{\begin{minipage}{7cm}
	\centering
	\includegraphics[scale=0.30]{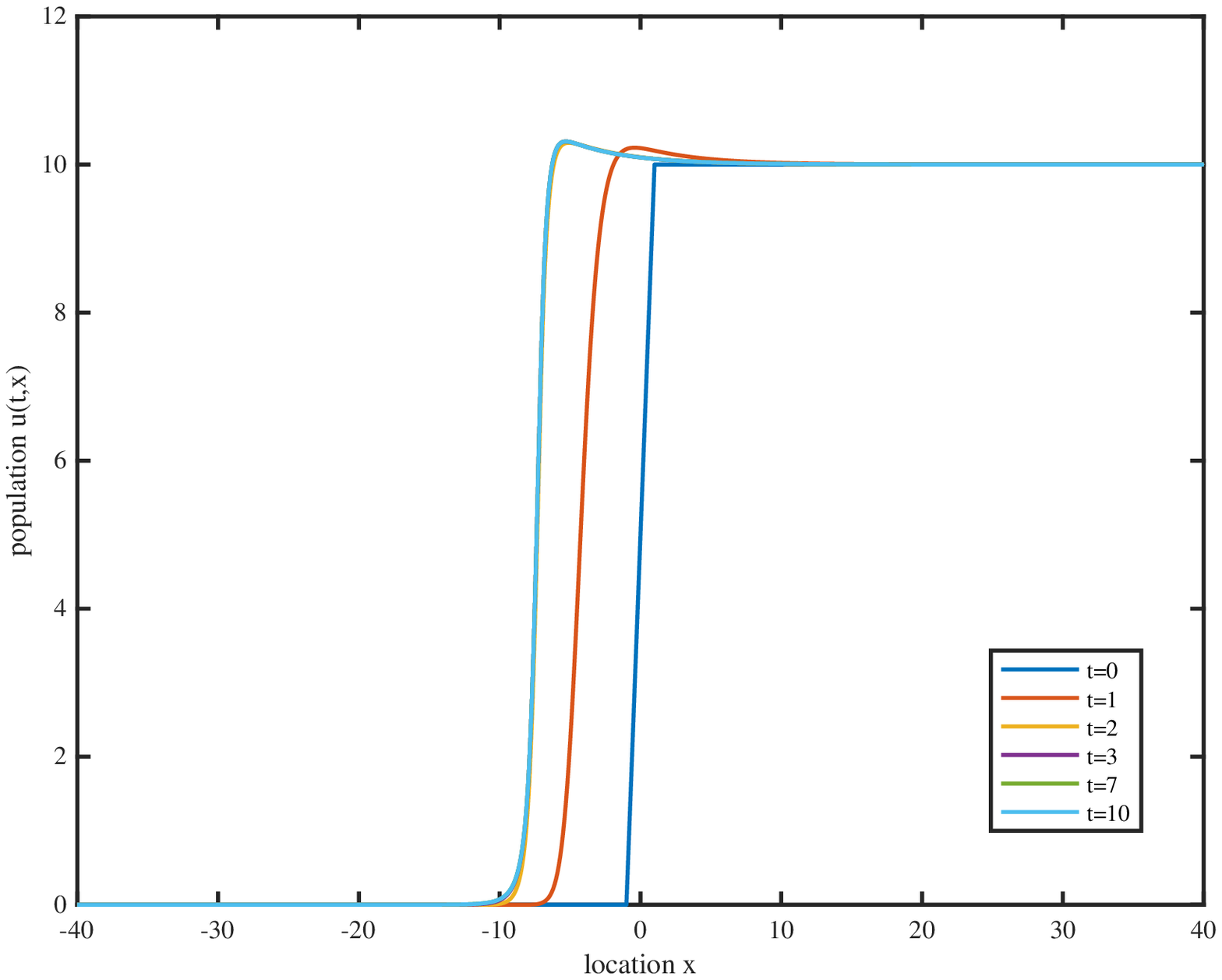}\label{Case1c1L40}
	\end{minipage}}
 \caption{{\bf (a)} Evolution of numerical solution of \eqref{wave-cut-off-eq1-1} on the interval $[-40,40]$ with $b=1$ and  $c=1$. {\bf (b)} numerical solution of \eqref{wave-cut-off-eq1-1} on the interval $[-40,40]$ at time $t=0,1,2,3,7,10$ with $b=1$ and $c=1$.}
\label{Case1c1L40-1-1}
\end{figure}

\noindent{\bf Numerical Experiment 2.}
Let $b=1$, $c=-6$. For these choices of $b$ and $c$, $b$ and $c$ satisfy $b>2\chi \mu$ and $c>-c^*$, but the assumption
$c>\frac{\chi\mu r^*}{2\sqrt{\nu}(b-\chi\mu)}-2\sqrt{\frac{r^*(b-2\chi\mu)}{b-\chi\mu}}$ does not hold.

 We compute the numerical solution of \eqref{wave-cut-off-eq1-1} with $L=15, 20, 25, 30$, and $40$
on the time interval $[0,15]$. For all the choices of $L$, we observe that the numerical solution of \eqref{wave-cut-off-eq1-1}
changes very little after $t=7$, which indicates that the numerical solution converges to a stationary solution of \eqref{wave-cut-off-eq1-1}
as $t\to\infty$.
We also observe that the numerical solution $u(t,x)$ at $t=15$ changes very little and $u(15,L)$ is very close to
 $\frac{r^*}{b}=10$ as $L$ increases,
which indicates the stationary solution of \eqref{wave-cut-off-eq1-1} converges to a stationary solution of \eqref{wave-eq} connecting
$(\frac{r^*}{b},\frac{\mu}{\nu}\frac{r^*}{b})$ and $(0,0)$ or a forced wave solution of \eqref{Keller-Segel-eq0}  connecting
$(\frac{r^*}{b},\frac{\mu}{\nu}\frac{r^*}{b})$ and $(0,0)$ as $L\to\infty$. The numerical results indicate that when
 $c>-c^*$ and $b>2\chi\mu$,   \eqref{Keller-Segel-eq0} still
 has a forced wave solution.
We demonstrate the numerical solutions of \eqref{wave-cut-off-eq1-1} for the case $L=20$ and $L=40$ in Figure \ref{Case1c6L20-1-1}  and
Figure \ref{Case1c6L40-1-1} , respectively.

\begin{figure}[H]
\centering
\subfigure[]
{\begin{minipage}{7cm}
	\centering
	\includegraphics[scale=0.17]{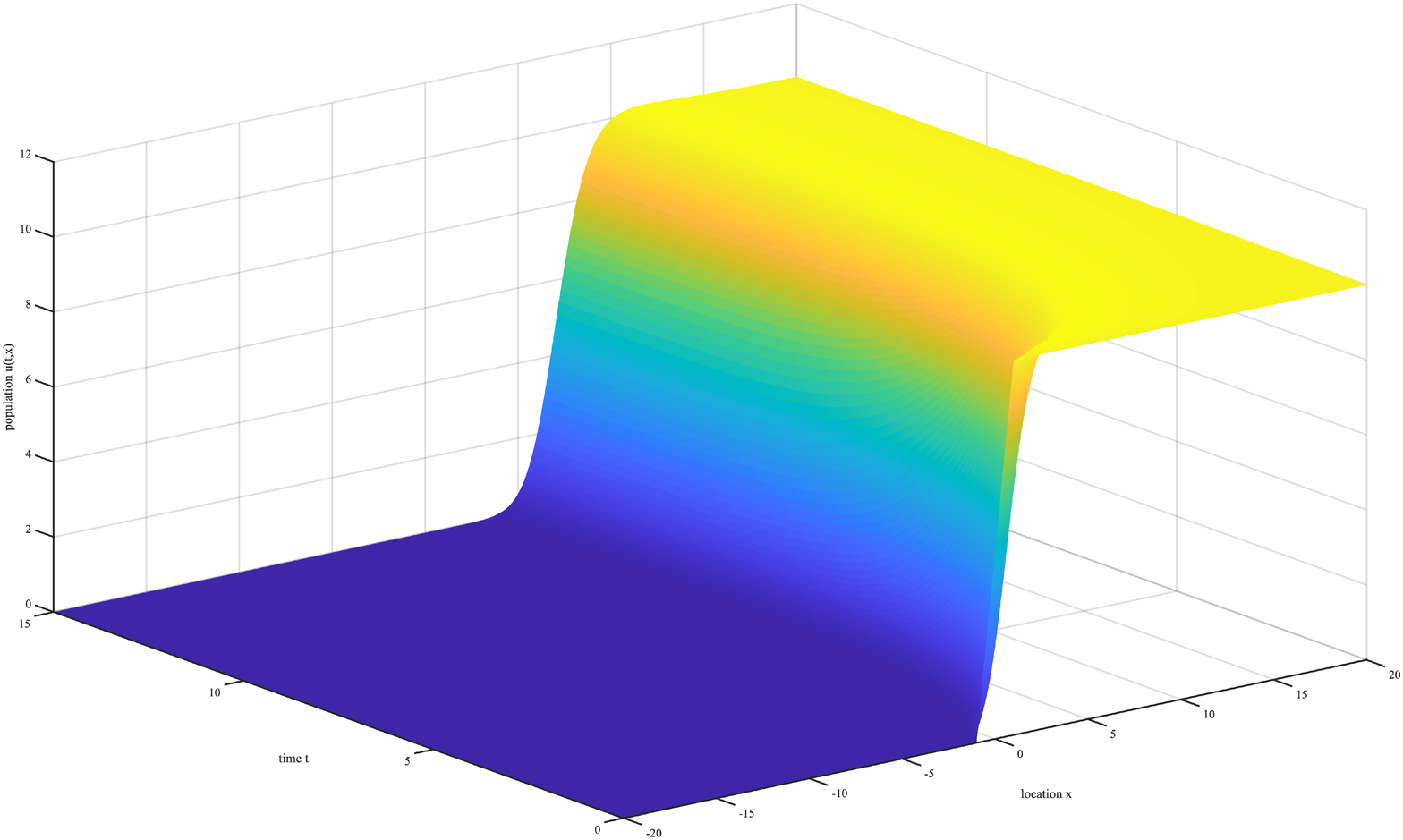}\label{Case1c6L20-1}
	\end{minipage}} \quad
	\subfigure[]
{\begin{minipage}{7cm}
	\centering
	\includegraphics[scale=0.30]{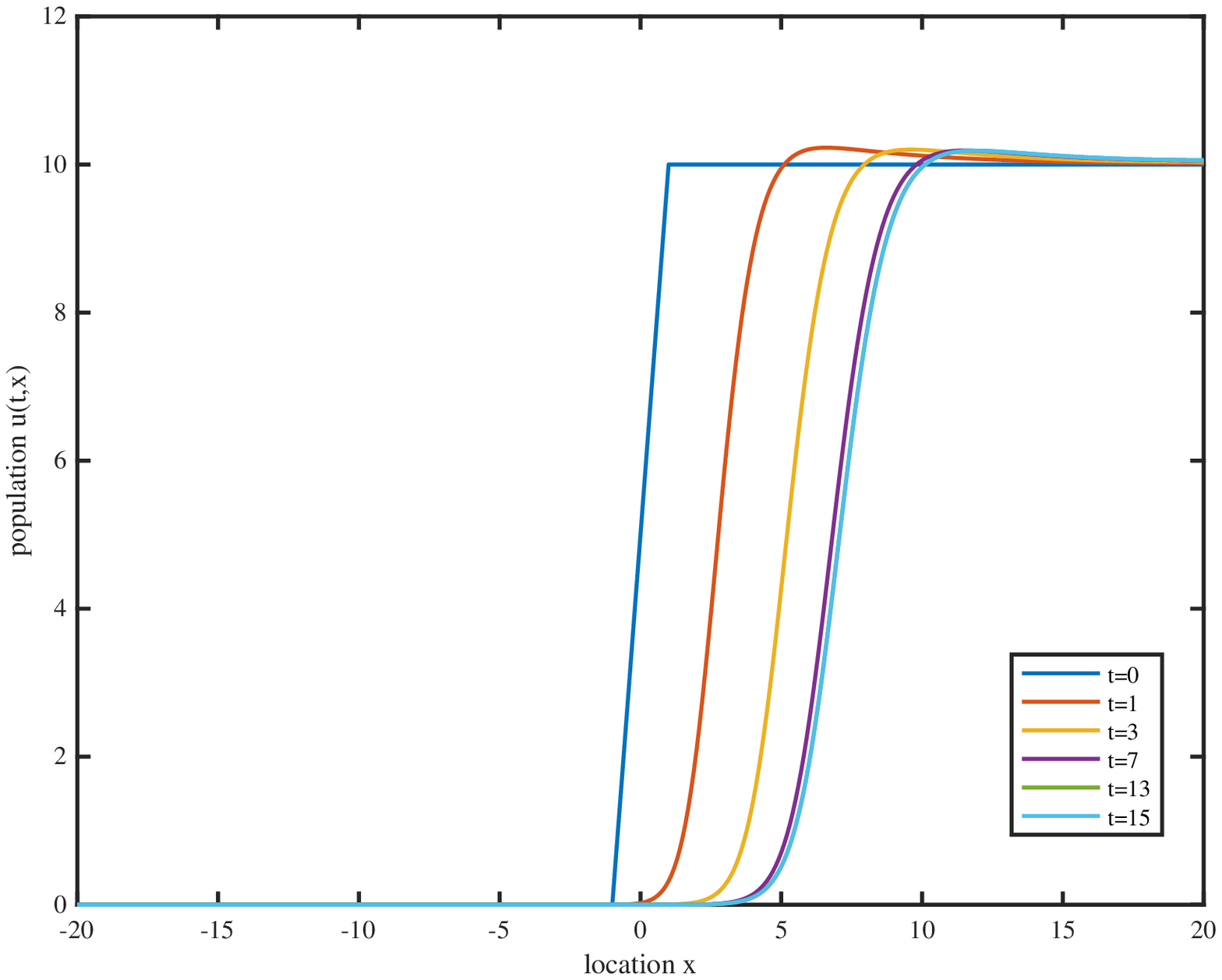}\label{Case1c6L20}
	\end{minipage}}
 \caption{{\bf (a)} Evolution of numerical solution of \eqref{wave-cut-off-eq1-1} on the interval $[-20,20]$ with $b=1$ and $c=-6$. {\bf (b)} numerical solution of \eqref{wave-cut-off-eq1-1} on the interval $[-20,20]$ at time $t=0,1,3,7,13,15$ with $b=1$ and $c=-6$.}
\label{Case1c6L20-1-1}
\end{figure}

\begin{figure}[H]
\centering
\subfigure[]
{\begin{minipage}{7cm}
	\centering
	\includegraphics[scale=0.17]{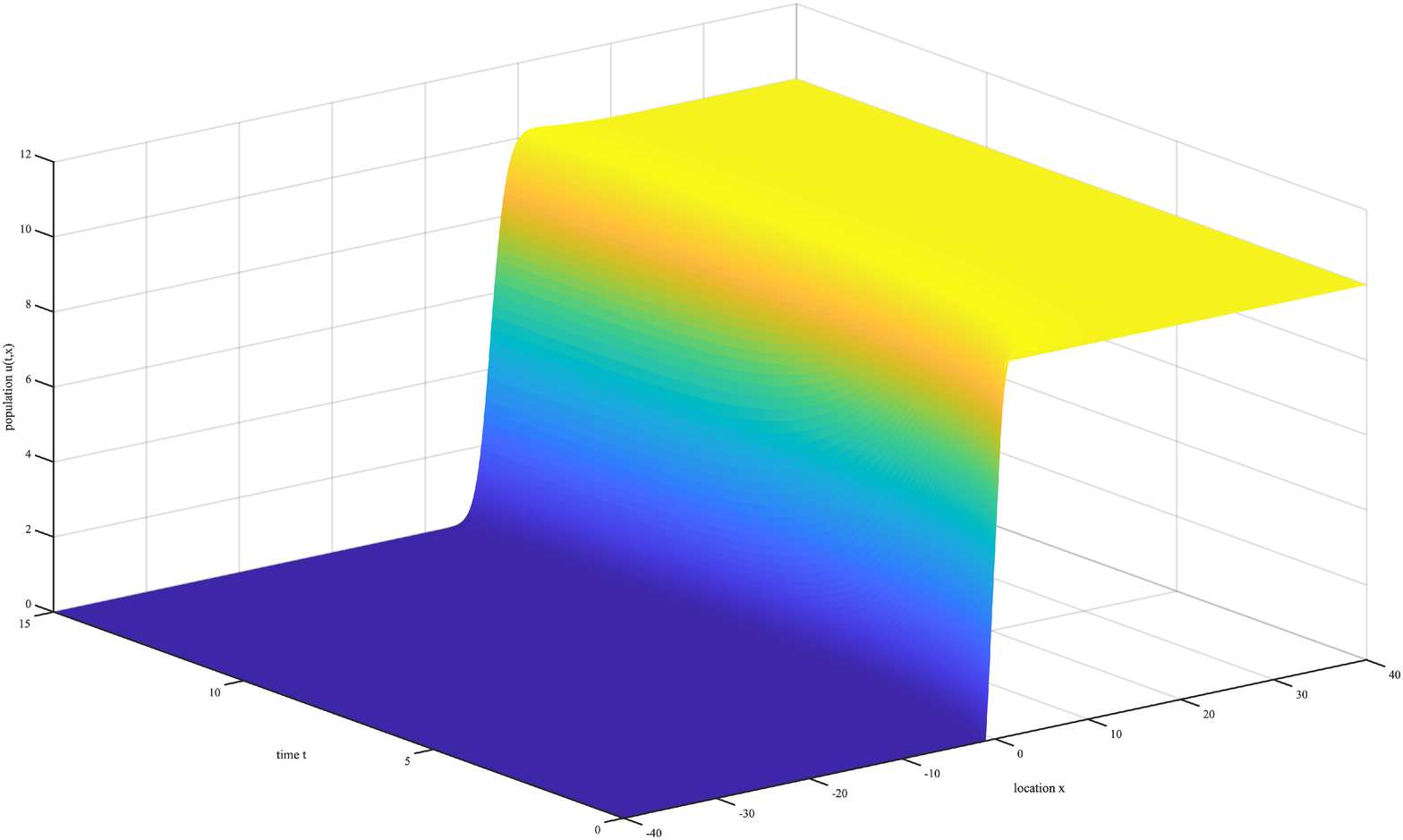}\label{Case1c6L40-1}
	\end{minipage}} \quad
	\subfigure[]
{\begin{minipage}{7cm}
	\centering
	\includegraphics[scale=0.30]{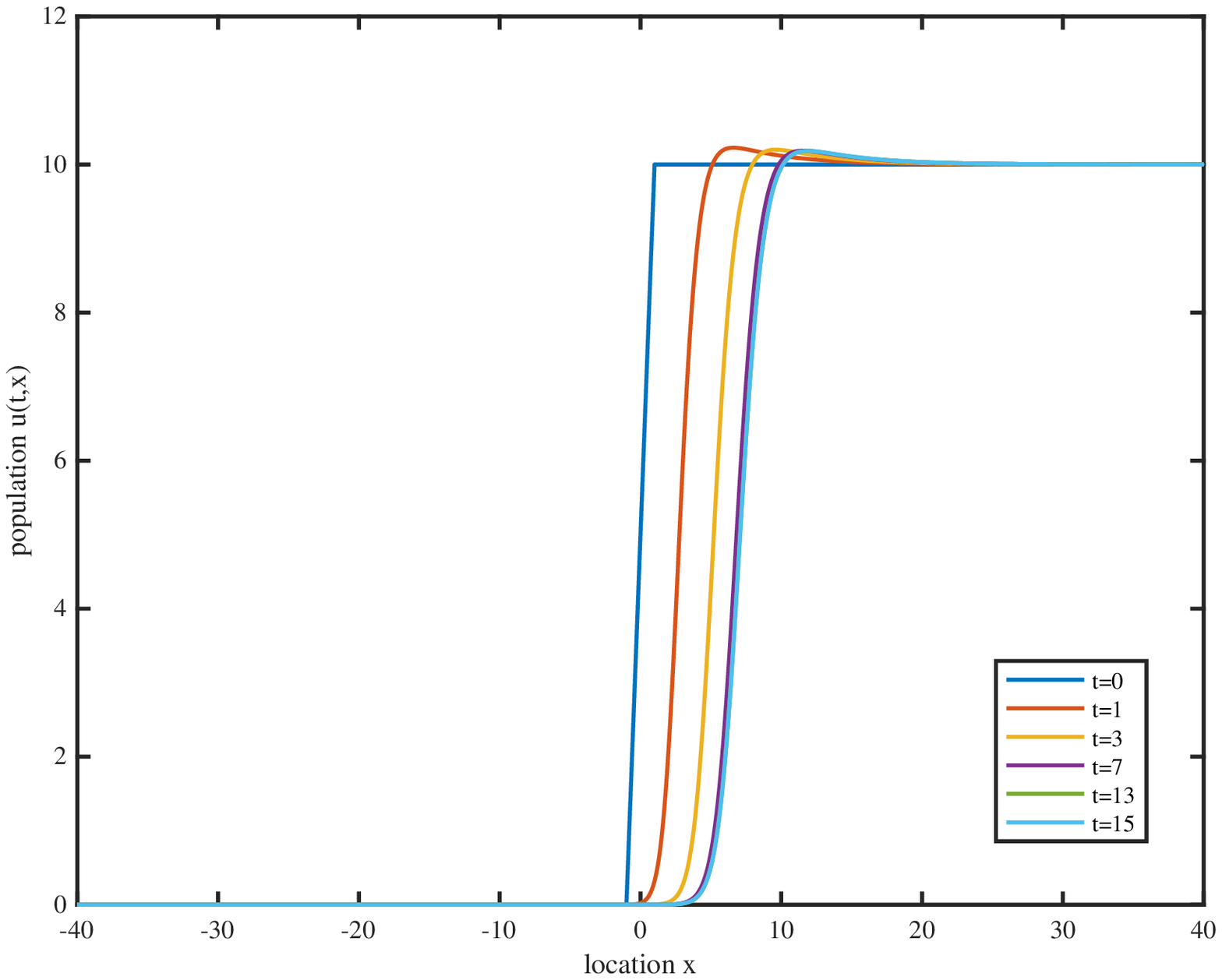}\label{Case1c6L40}
	\end{minipage}}
 \caption{{\bf (a)} Evolution of numerical solution of \eqref{wave-cut-off-eq1-1} on the interval $[-40,40]$ with $b=1$ and  $c=-6$. {\bf (b)} numerical solution of \eqref{wave-cut-off-eq1-1} on the interval $[-40,40]$ at time $t=0,1,3,7,13,15$ with $b=1$ and $c=-6$.}
\label{Case1c6L40-1-1}
\end{figure}

\noindent{\bf Numerical Experiment 3.}
Let $b=0.15$, $c=-6$. For these choices of $b$ and $c$, $b$ and $c$ satisfy $b>\chi \mu$ and $c>-c^*$.

 We compute the numerical solution of \eqref{wave-cut-off-eq1-1} with $L=35, 40, 45, 50$, and $60$
on the time interval $[0,60]$. For all the choices of $L$, we observe that the numerical solution of \eqref{wave-cut-off-eq1-1}
changes very little after $t=50$, which indicates that the numerical solution converges to a stationary solution of \eqref{wave-cut-off-eq1-1}
as $t\to\infty$.
We also observe that the numerical solution $u(t,x)$ at $t=60$ changes very little and $u(60,L)$ is very close to
 $\frac{r^*}{b}=\frac{10}{0.15}\approx 66.67$ as $L$ increases,
which indicates the stationary solution of \eqref{wave-cut-off-eq1-1} converges to a stationary solution of \eqref{wave-eq} connecting
 $(\frac{r^*}{b},\frac{\mu}{\nu}\frac{r^*}{b})$ and $(0,0)$ or a forced wave solution of \eqref{Keller-Segel-eq0}  connecting
$(\frac{r^*}{b},\frac{\mu}{\nu}\frac{r^*}{b})$ and $(0,0)$ as $L\to\infty$.   The numerical results indicate that when
 $c>-c^*$ and $b>\chi\mu$,   \eqref{Keller-Segel-eq0} still
 has a forced wave solution.

We demonstrate the numerical solutions of \eqref{wave-cut-off-eq1-1} for the case $L=40$ and $L=60$ in Figure \ref{Case1b015c-6L40-1-1}  and
Figure \ref{Case1b015c-6L60-1-1} , respectively.

\begin{figure}[H]
\centering
\subfigure[]
{\begin{minipage}{7cm}
	\centering
	\includegraphics[scale=0.17]{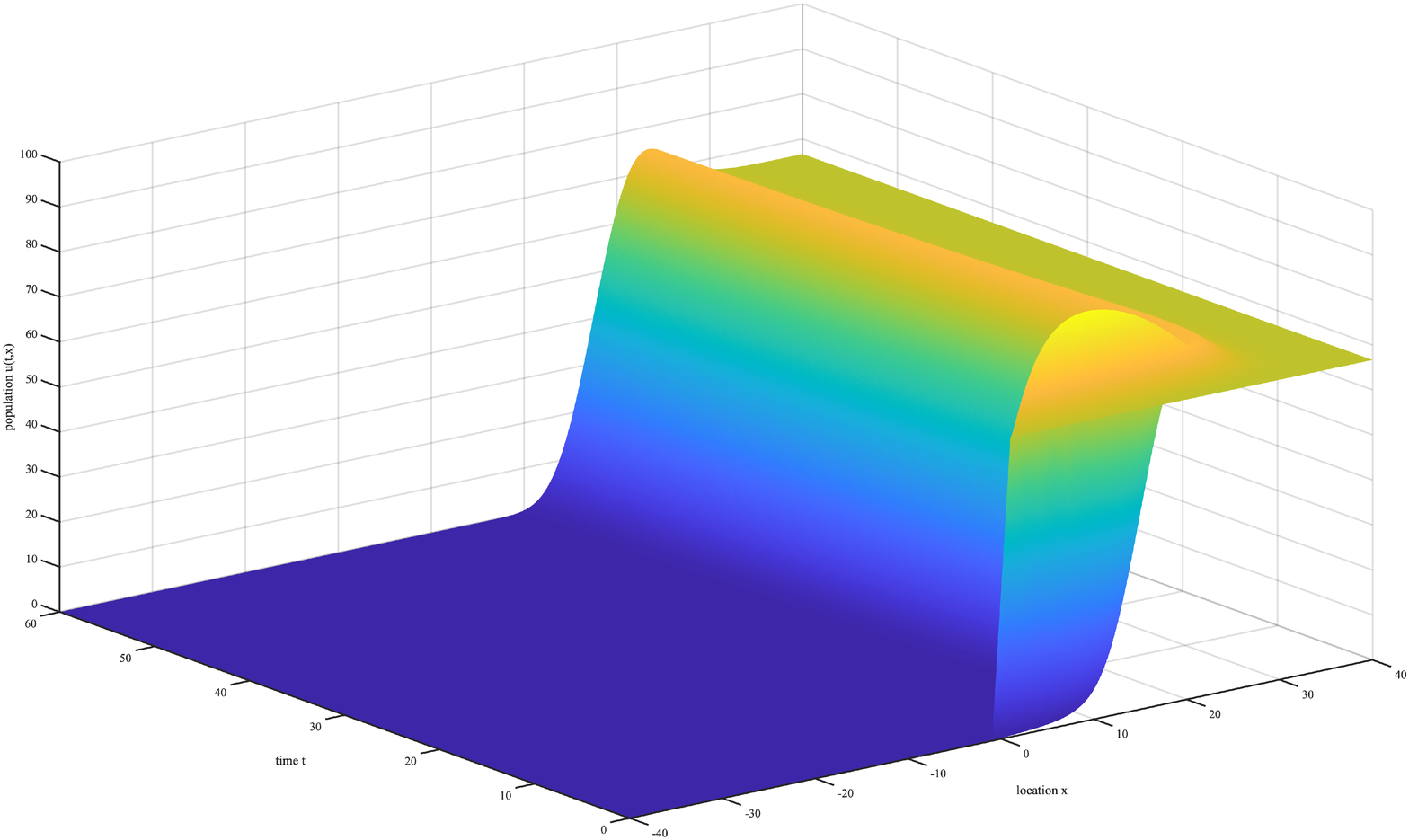}\label{Case1b015c-6L40-1}
	\end{minipage}} \quad
	\subfigure[]
{\begin{minipage}{7cm}
	\centering
	\includegraphics[scale=0.30]{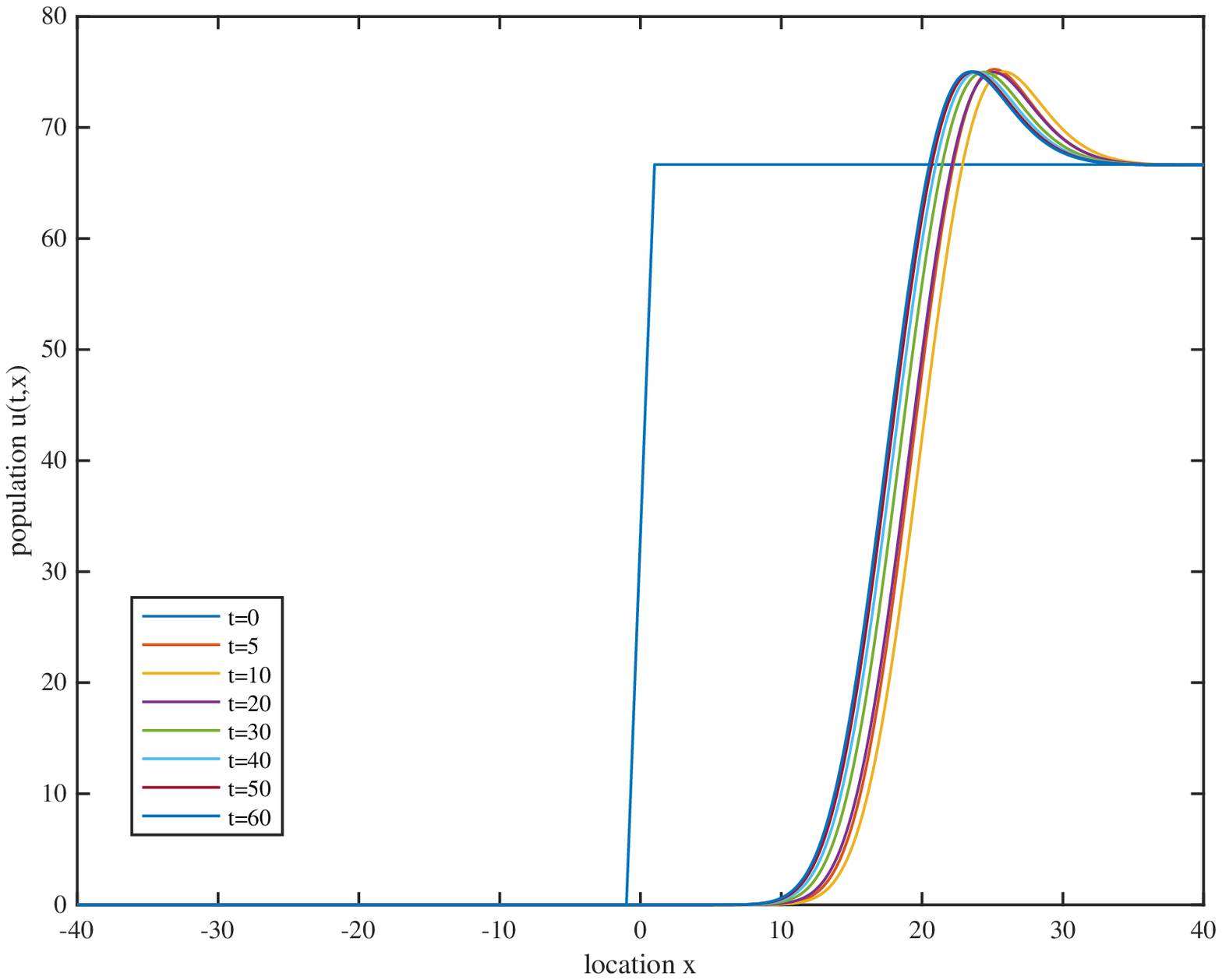}\label{Case1b015c-6L40}
	\end{minipage}}
 \caption{{\bf (a)} Evolution of numerical solution of \eqref{wave-cut-off-eq1-1} on the interval $[-40, 40]$ with $b=0.15$ and  $c=-6$. {\bf (b)} numerical solution of \eqref{wave-cut-off-eq1-1} on the interval $[-40, 40]$ at time $t=0, 5, 10, 20, 30, 40, 50, 60$ with $b=0.15$ and $c=-6$.}
\label{Case1b015c-6L40-1-1}
\end{figure}

\begin{figure}[H]
\centering
\subfigure[]
{\begin{minipage}{7cm}
	\centering
	\includegraphics[scale=0.17]{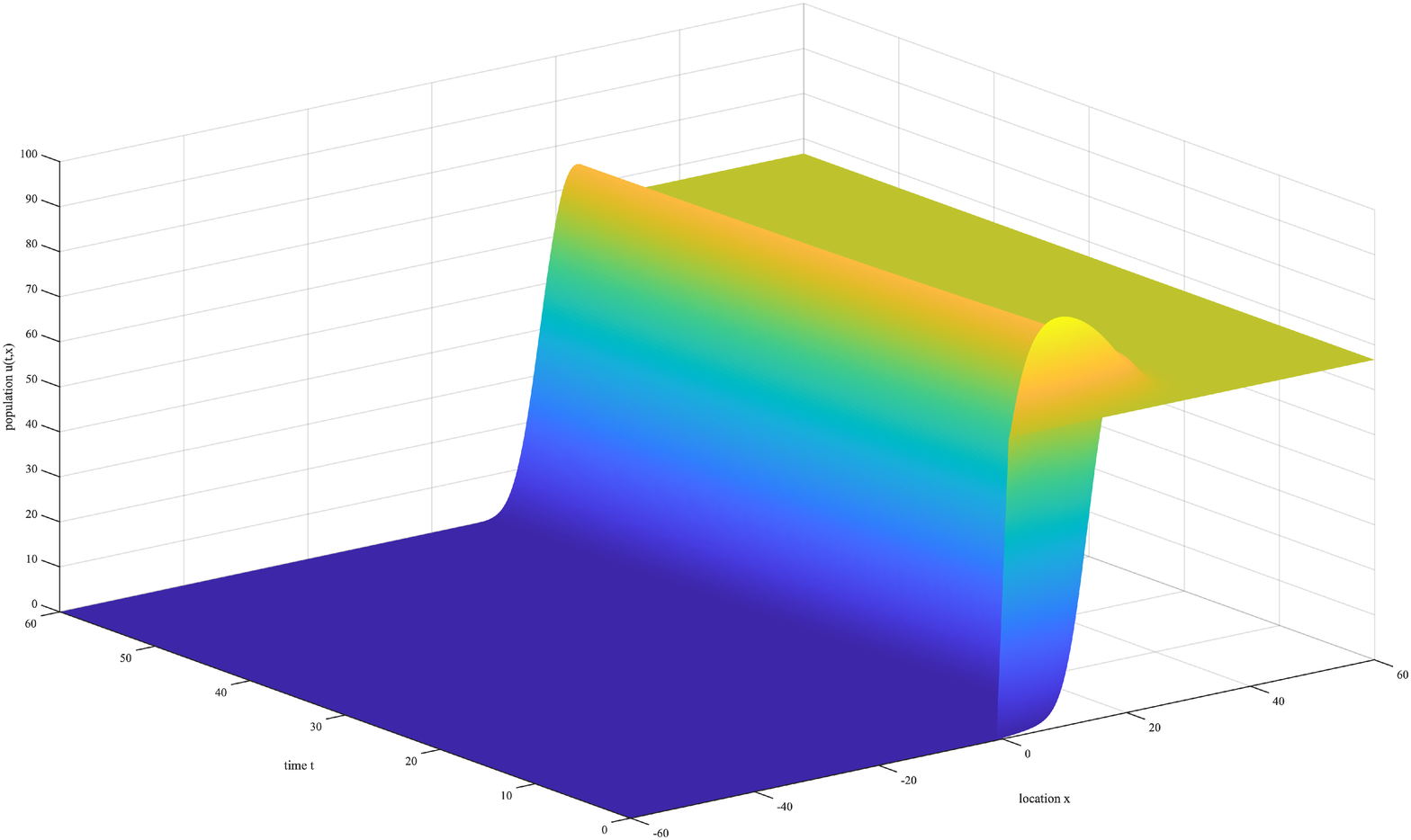}\label{Case1b015c-6L60-1}
	\end{minipage}} \quad
	\subfigure[]
{\begin{minipage}{7cm}
	\centering
	\includegraphics[scale=0.30]{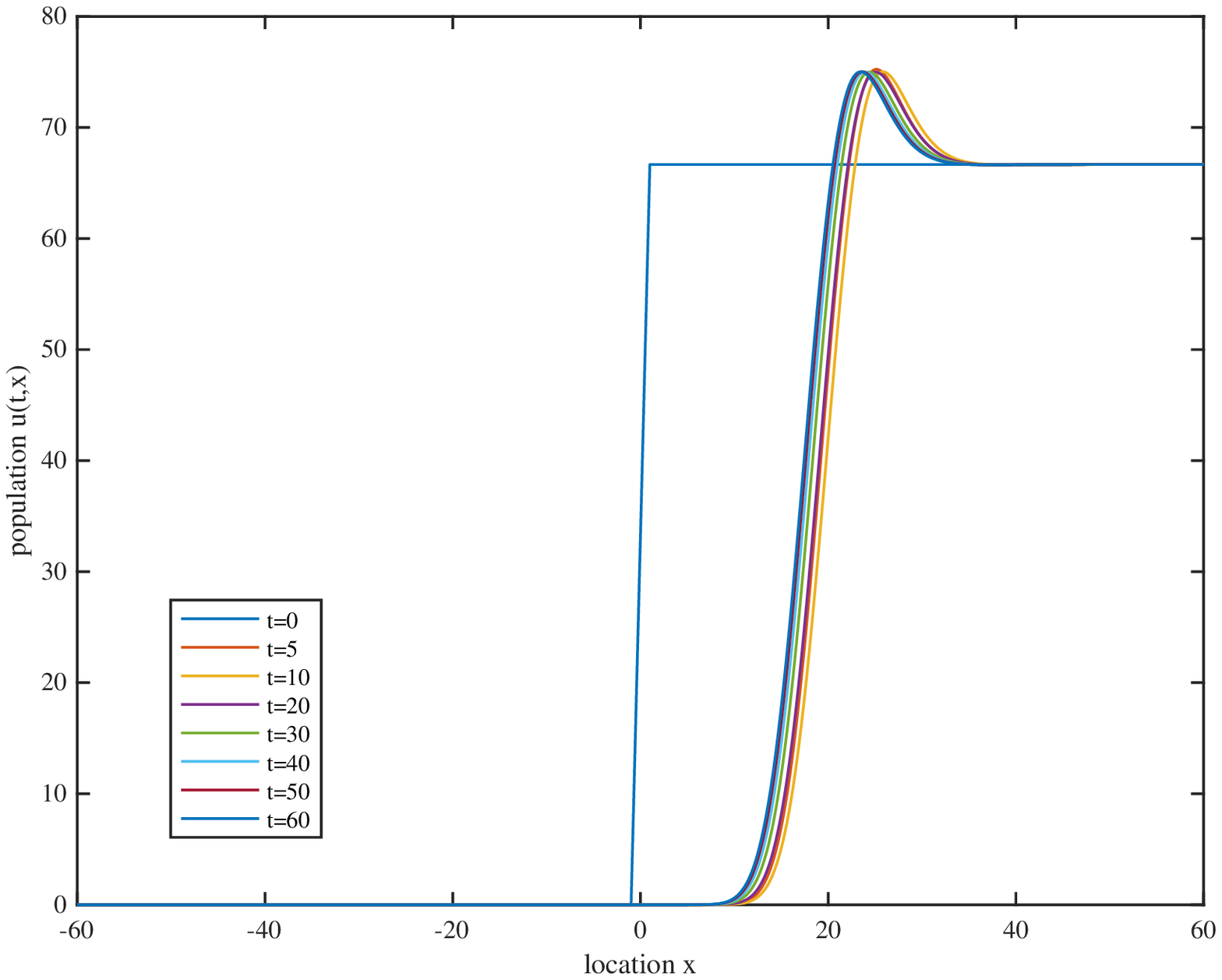}\label{Case1b015c-6L60}
	\end{minipage}}
 \caption{{\bf (a)} Evolution of numerical solution of \eqref{wave-cut-off-eq1-1} on the interval $[-60, 60]$ with $b=0.15$ and  $c=-6$. {\bf (b)} numerical solution of \eqref{wave-cut-off-eq1-1} on the interval $[-60, 60]$ at time $t=0, 5, 10, 20, 30, 40, 50, 60$ with $b=0.15$ and $c=-6$.}
\label{Case1b015c-6L60-1-1}
\end{figure}

\medskip

\noindent{\bf Numerical Experiment 4.}
Let $b=1$, $c=-6.5$. For these choices of $b$ and $c$, $b$ and $c$ satisfy $b>2\chi \mu$ and $c<-c^*$.

 We compute the numerical solution of \eqref{wave-cut-off-eq1-1} with $L=15$ on the time interval $[0,50]$, with $L= 20$  on the time interval $[0,60]$, with $L=25$ on the time interval $[0,70]$, with $L=30$ on the time interval $[0,90]$, and with $L=40$ on the time interval $[0,140]$.
For all the choices of $L$, we observe that the numerical solution of \eqref{wave-cut-off-eq1-1}
becomes very small after certain time, which indicates that the numerical solution converges to the zero solution of \eqref{wave-cut-off-eq1-1}
as $t\to\infty$, and also indicates that
 \eqref{wave-eq} has no positive stationary solutions  or \eqref{Keller-Segel-eq0}  has  no forced wave solutions in the case that $c<-c^*$.

We demonstrate the numerical solutions of \eqref{wave-cut-off-eq1-1} for the case $L=20$ and $L=40$ in Figure \ref{Case1c65L20-1-1}  and
Figure \ref{Case1c65L40-1-1} , respectively.

\begin{figure}[H]
\centering
\subfigure[]
{\begin{minipage}{7cm}
	\centering
	\includegraphics[scale=0.17]{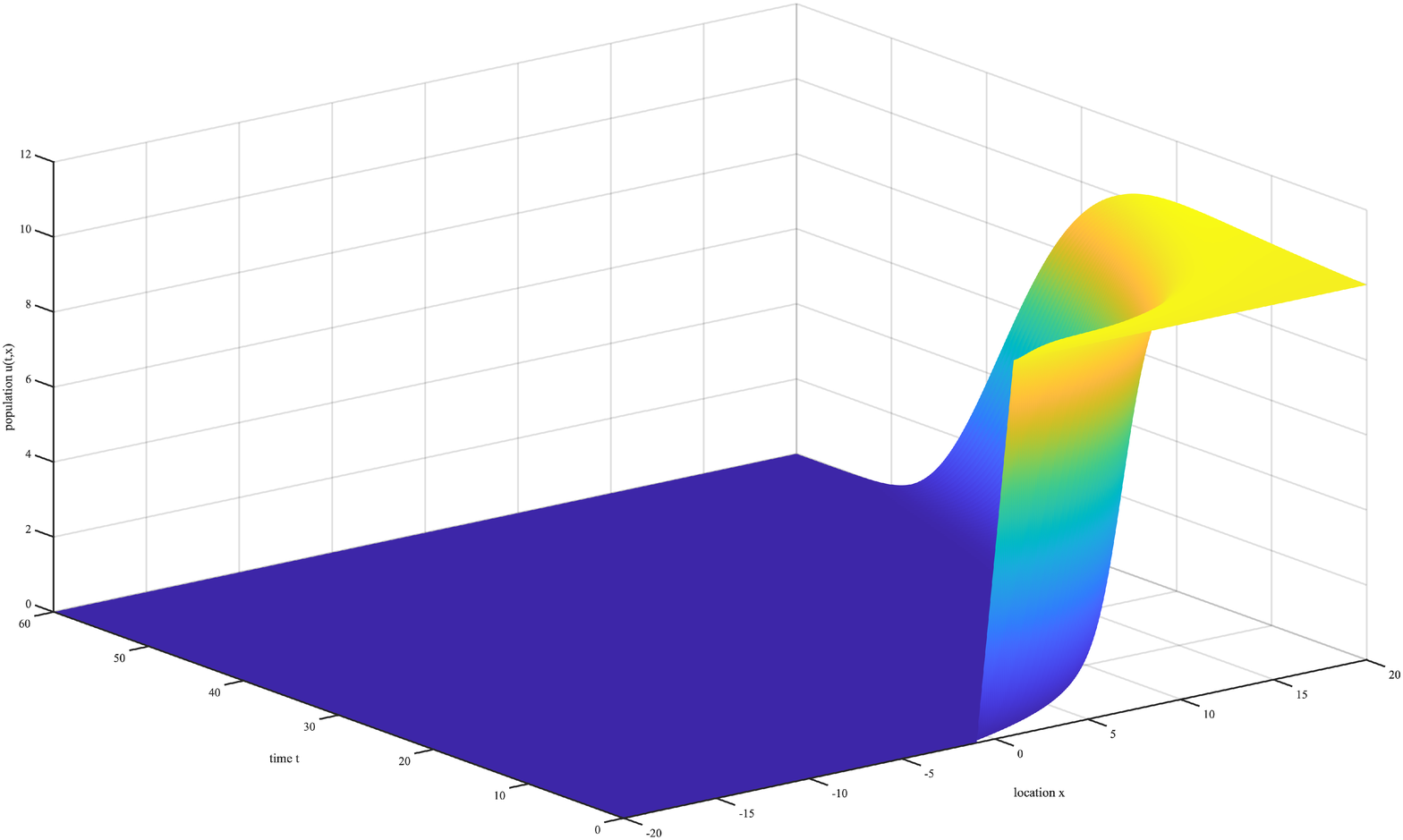}\label{Case1c65L20-1}
	\end{minipage}} \quad
	\subfigure[]
{\begin{minipage}{7cm}
	\centering
	\includegraphics[scale=0.30]{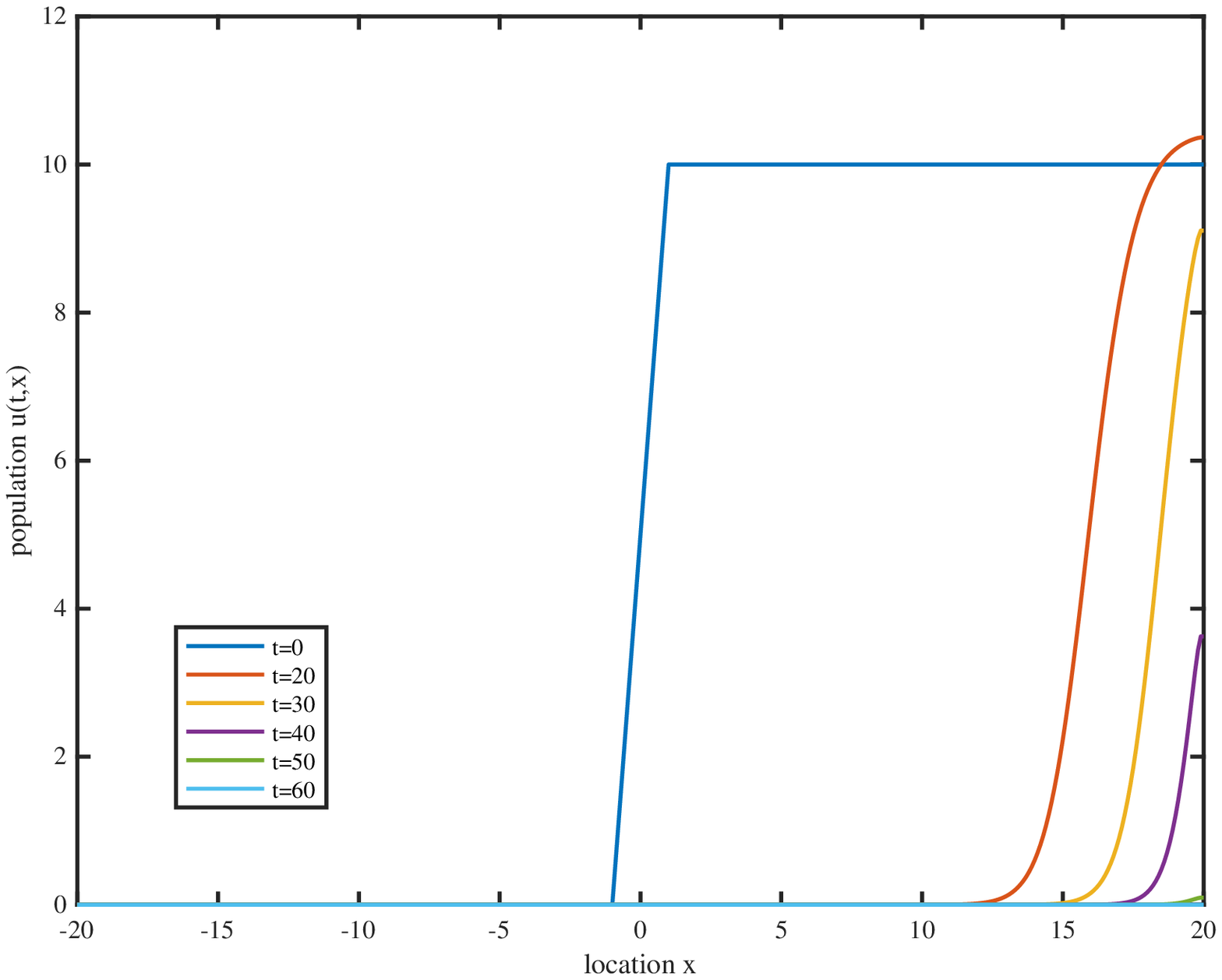}\label{Case1c65L20}
	\end{minipage}}
 \caption{{\bf (a)} Evolution of numerical solution of \eqref{wave-cut-off-eq1-1} on the interval $[-20,20]$ with $b=1$ and  $c=-6.5$. {\bf (b)} numerical solution of \eqref{wave-cut-off-eq1-1} on the interval $[-20,20]$ at time $t=0, 20, 30, 40, 50, 60$ with $b=1$ and $c=-6.5$.}
\label{Case1c65L20-1-1}
\end{figure}

\begin{figure}[H]
\centering
\subfigure[]
{\begin{minipage}{7cm}
	\centering
	\includegraphics[scale=0.17]{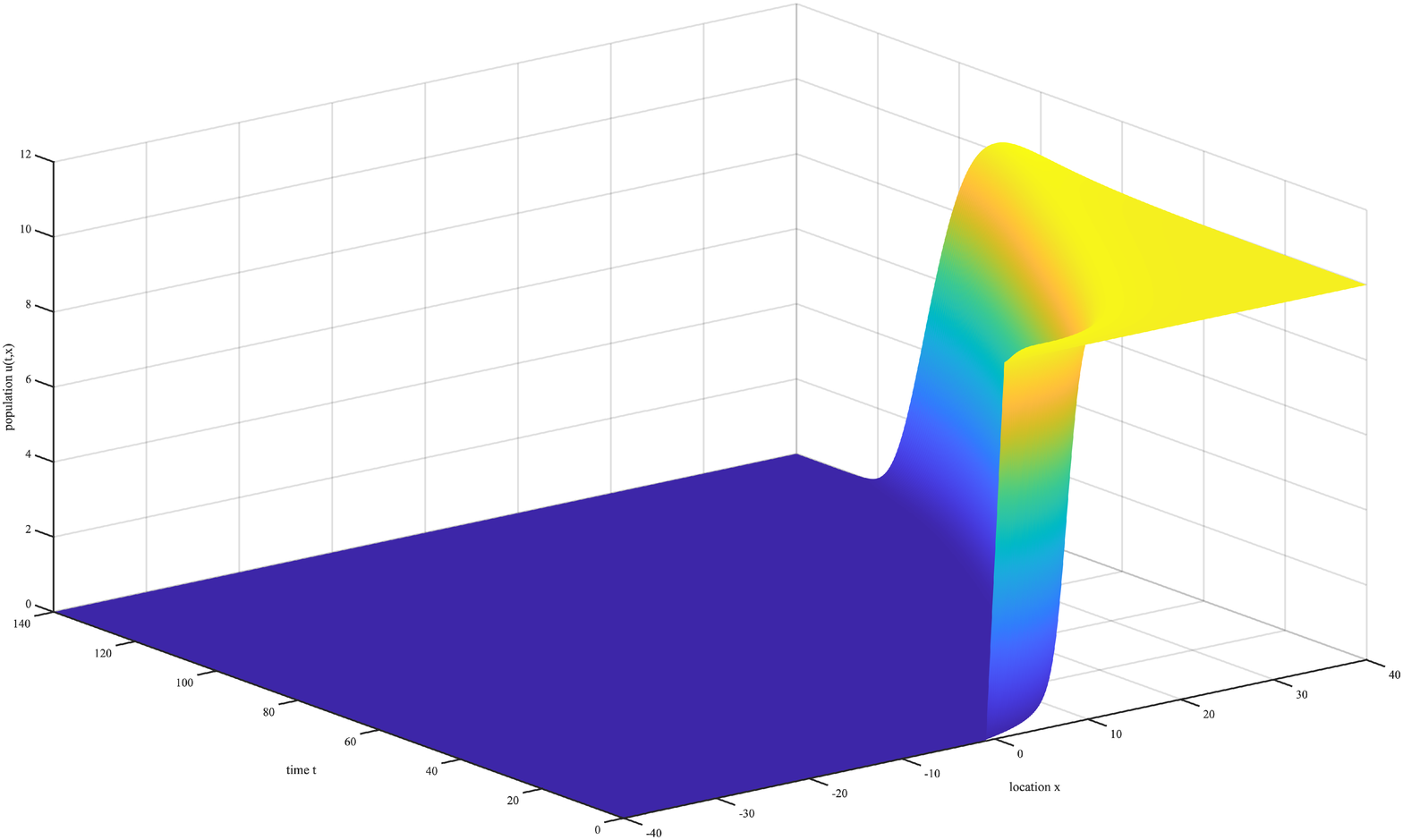}\label{Case1c65L40-1}
	\end{minipage}} \quad
	\subfigure[]
{\begin{minipage}{7cm}
	\centering
	\includegraphics[scale=0.30]{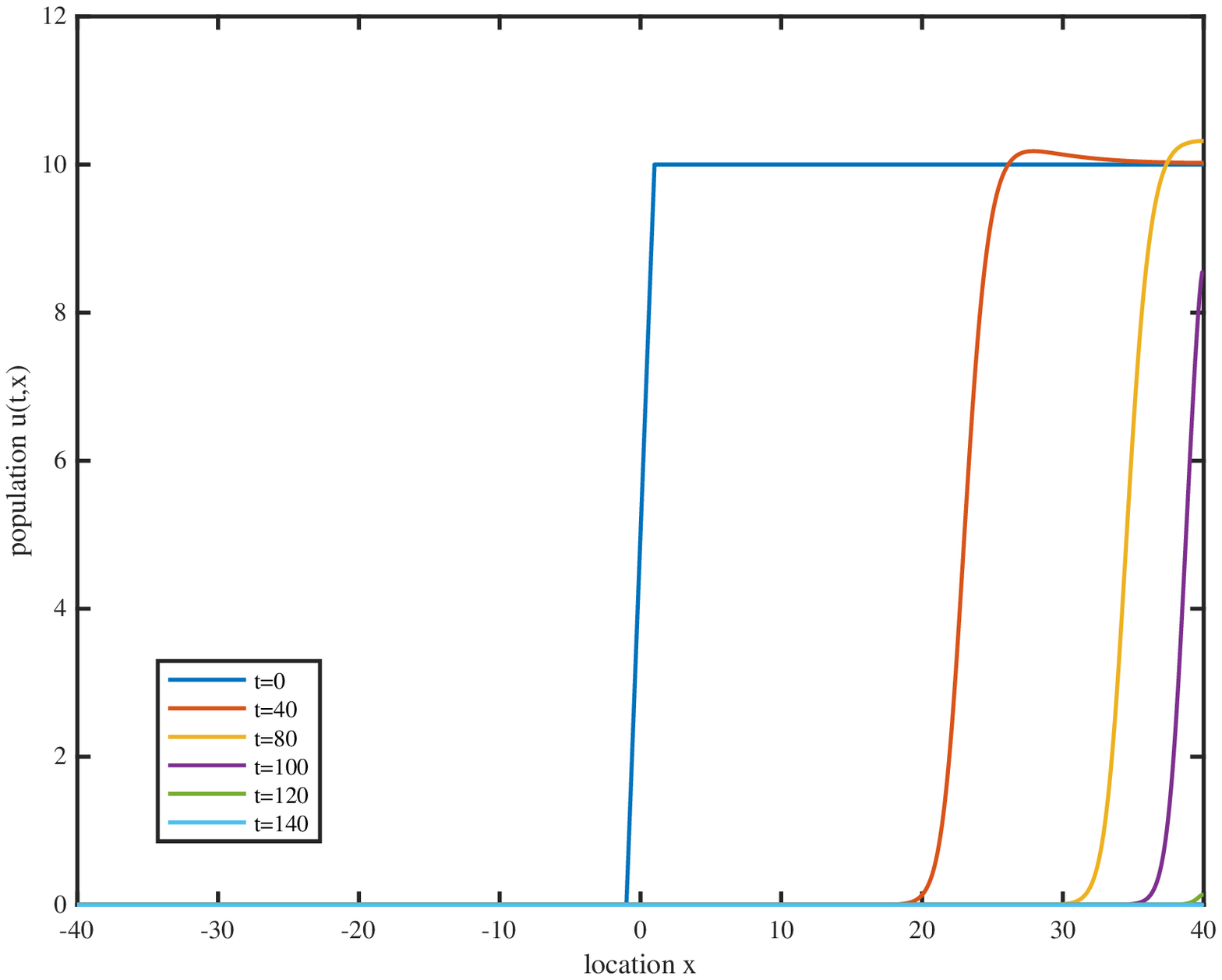}\label{Case1c65L40}
	\end{minipage}}
 \caption{{\bf (a)} Evolution of numerical solution of \eqref{wave-cut-off-eq1-1} on the interval $[-40,40]$ with $b=1$ and  $c=-6.5$. {\bf (b)} numerical solution of \eqref{wave-cut-off-eq1-1} on the interval $[-40,40]$ at time $t=0, 40, 80, 100, 120, 140$ with $b=1$ and $c=-6.5$.}
\label{Case1c65L40-1-1}
\end{figure}

\begin{rk}
\begin{itemize}
\item[(1)]
The numerical simulations above illustrate our Theorem \ref{forced-wave-thm1} and also shows that the assumptions in
Theorem \ref{forced-wave-thm1} can be weakened. Based on these numerical simulations, we conjecture that if $b>\chi\mu$, and $c>-2\sqrt{r^*}$, there is a forced wave solution
$(u(t,x),v(t,x))=(\phi(x-ct),\psi(x-ct))$ connecting $(\frac{r^*}{b},\frac{\mu}{\nu}\frac{r^*}{b})$ and $(0,0)$, that is,
$\phi(\infty)=\frac{r^*}{b}$ and $\phi(-\infty)=0$. If $b>\chi\mu$ and $c<-2\sqrt{r^*}$, there is no  forced wave solution
$(u(t,x),v(t,x))=(\phi(x-ct),\psi(x-ct))$ connecting $(\frac{r^*}{b},\frac{\mu}{\nu}\frac{r^*}{b})$ and $(0,0)$, that is,
$\phi(\infty)=\frac{r^*}{b}$ and $\phi(-\infty)=0$.

\item[(2)] {We used the explicit  scheme  in the above simulations.  It should be pointed out that
the explicit scheme and  implicit scheme for parabolic-type equations have the same order of accuracy,
but the implicit scheme  yields a weaker CFL condition for the stability. For example,
 it  is known that  the explicit scheme for the heat equation
  $$
  u_t=u_{xx}
  $$
   is conditionally stable under the following CFL condition:
  $$
  \frac{\tau}{h^2} \le \frac{1}{2},
  $$
  where $\tau$ is the time step size and $h$ is the space step size, while the implicit scheme is unconditionally stable for the heat equation.
  }

\item[(3)]
In the above four numerical experiments, we used the same space step size $h=0.1$ and the same time step size $\tau=0.002$.
They satisfy the numerical stable condition $\frac{\tau}{h^2}<\frac{1}{2}$. We do not give the accuracy analysis of the simulations
in this paper. To see the  reliability of the numerical results,
we also tried different values of  $h$ and  $\tau$ to simulate the existence of forced wave solutions.
For the above four experiments, let $h=0.1$ be fixed, choose $\tau=0.001, 0.002, 0.004$ respectively, the graphs we got do not have a big difference. Fix $h=0.05$, let $\tau=0.001, 0.0005, 0.00025$ respectively, the graphs we got also do not have a big difference.

\item[(4)]
We also tried to use different initial conditions to simulate the forced wave. For example, let
$$
u_0(x)=\begin{cases}
0 \quad  &{\rm if}\,\,  x<-1,\cr
x+1                    \quad &{\rm if}\,\,  -1\leq x\leq 1,\cr
2 \quad  &{\rm if}\,\,  x>1.
\end{cases}
$$
We see similar dynamical scenarios. We then conjecture that the forced wave solution of \eqref{Keller-Segel-eq0}
is unique and stable in certain parameter region.

\end{itemize}
\end{rk}

\subsection{Numerical simulations in Case 2}

In this subsection, we study the numerical simulations of the forced wave solutions in Case 2.
To this end, we consider
\begin{equation}\label{wave-cut-off-eq2-1}
\begin{cases}
u_t= u_{xx}+cu_x- (\chi u  v_x)_x+u(r(x)-bu),\quad -L<x<L\cr
0 =v_{xx}-  \nu v +\mu u,\quad -L<x<L\cr
u(0,x)=u_0(x), \quad -L\leq x\leq L,\cr
u(t,-L)=v(t,-L)=0\cr
u(t,L)=v(t,L)=0
\end{cases}
\end{equation}
for reasonable large $L>1$,
where
$$
u_0(x)=\begin{cases}
0 \quad  &{\rm if}\,\,  |x|>1,\cr
(x+1)(1-x)                    \quad &{\rm if}\,\,  -1\leq x\leq 1.
\end{cases}
$$

\medskip

Similarly, to compute the solution of \eqref{wave-cut-off-eq2-1} on a time interval $[0,T]$  numerically, we divide the space interval $[-L,L]$ into $M$ subintervals with equal length and divide the time interval $[0,T]$ into $N$ subintervals with equal length. The space step size is $h=\frac{2L}{M}$ and the time step size is $\tau=\frac{T}{N}$. For simplicity, we denote the approximate value of $u(t_j, x_i)$ by $u(j,i)$, $r(x_i)$ by $r(i)$ and $v(t_j, x_i)$ by $v(j,i)$ with $t_{j}=(j-1)\tau$, $1 \leq j\leq N+1$, and $x_{i}=-L+(i-1)h$, $1 \leq i\leq  M+1$.

By the same procedure as in Case 1,
the second equation in \eqref{wave-cut-off-eq2-1} can be discretized as
\begin{equation}
\frac{v(j,i-1)-2v(j,i)+v(j,i+1)}{h^2}-\nu v(j,i)+\mu u(j,i)=0, \quad 2\leq i\leq M.
\end{equation}
Together with the boundary conditions $$v(j,1)=v(j,M+1)=0,$$ we can solve for the solution $(v(j,1), v(j,2), \cdots, v(j,M+1))$ for each $j$.
By the same numerical discretization for the first equation of \eqref{wave-cut-off-eq2-1} as in Case 1,
we have an explicit formula for $u(j+1,i)$,
\begin{align*}
u(j+1,i)&=\big(\frac{\tau}{h^2}-\frac{\tau}{2h}(c-\chi \frac{v(j,i+1)-v(j,i-1)}{2h})\big)
u(j,i-1)\cr
&+\big(1-\frac{2\tau}{h^2}+\tau r(i)-\tau\chi\nu v(i)\big)u(j,i)-\tau(b-\chi\mu)u(j,i)^2\cr
&+\big(\frac{\tau}{h^2}+\frac{\tau}{2h}(c-\chi \frac{v(j,i+1)-v(j,i-1)}{2h})\big)u(j,i+1),\,\ 1\leq j\leq N,\quad 2\leq i\leq M.
\end{align*}
By the boundary condition, we set
$$
u(j+1,1)=u(j+1, M+1)=0 \quad 1\leq j\leq N.
$$
Thus, the values $u(j+1,i), 1 \leq i\leq M+1$ are obtained.

We choose $\mu=1$, $\nu=1$ and the following growth rate function
$$
r(x)=\begin{cases}
-1 \quad  &{\rm if}\,\,  |x| \ge 8,\cr
11x+87 &{\rm if}\,\,  -8<x< -7,\cr
10                   \quad &{\rm if}\,\,  -7\leq x\leq 7,\cr
-11x+87 &{\rm if}\,\,  7<x<8.
\end{cases}
$$
For this choice of $r(x)$, $r^*=10$ and  $c^*:=2\sqrt{r^*}\approx 6.325$.
We will do three  numerical experiments for different values of $b$, $c$ and $\chi$.
In these three numerical experiments, we will use the same space step size $h=0.1$ and the same time step size $\tau=0.002$.

\medskip

\noindent{\bf Numerical Experiment 1.}
Choose $c=1$, then $\lambda_{-7}(r(\cdot))=\frac{40-1-\frac{\pi^2}{49}}{4}>0$. Since $\lambda_{L}(r(\cdot))>\lambda_{-7}(r(\cdot))$ for $L>-7$, we have $\lambda_\infty(r(\cdot))\geq\lambda_{-7}(r(\cdot))>0$. Choose $b=1$ and $\chi=0.6$. Then  $b\ge \frac{3\chi\mu}{2}$.

\smallskip

We compute the numerical solution of \eqref{wave-cut-off-eq2-1} with $L=15, 20, 25, 30$, and $40$ on the time interval $[0,10]$. In all the cases,
we observe that the numerical solution changes very little after $t=3$ and stays away from $0$ on some fixed interval, which indicates
that the numerical solution converges to a positive stationary solution of \eqref{wave-cut-off-eq2-1} as $t\to\infty$. We also observe that
the numerical solution $u(t,x)$ at $t=10$ changes very little as $L$ increases, which indicates that the stationary solution of \eqref{wave-cut-off-eq2-1} converges as $L\to\infty$ to a stationary solution of \eqref{wave-eq}  or a forced wave solution of \eqref{Keller-Segel-eq0}  connecting
 $(0,0)$ and $(0,0)$.
We demonstrate the numerical solutions of \eqref{wave-cut-off-eq2-1} for the cases $L=20$ and $L=40$ in Figure \ref{Case2b1chi06L20-1-1}  and
Figure \ref{Case2b1chi06L40-1-1}, respectively.


\begin{figure}[H]
\centering
\subfigure[]
{\begin{minipage}{7cm}
	\centering
	\includegraphics[scale=0.17]{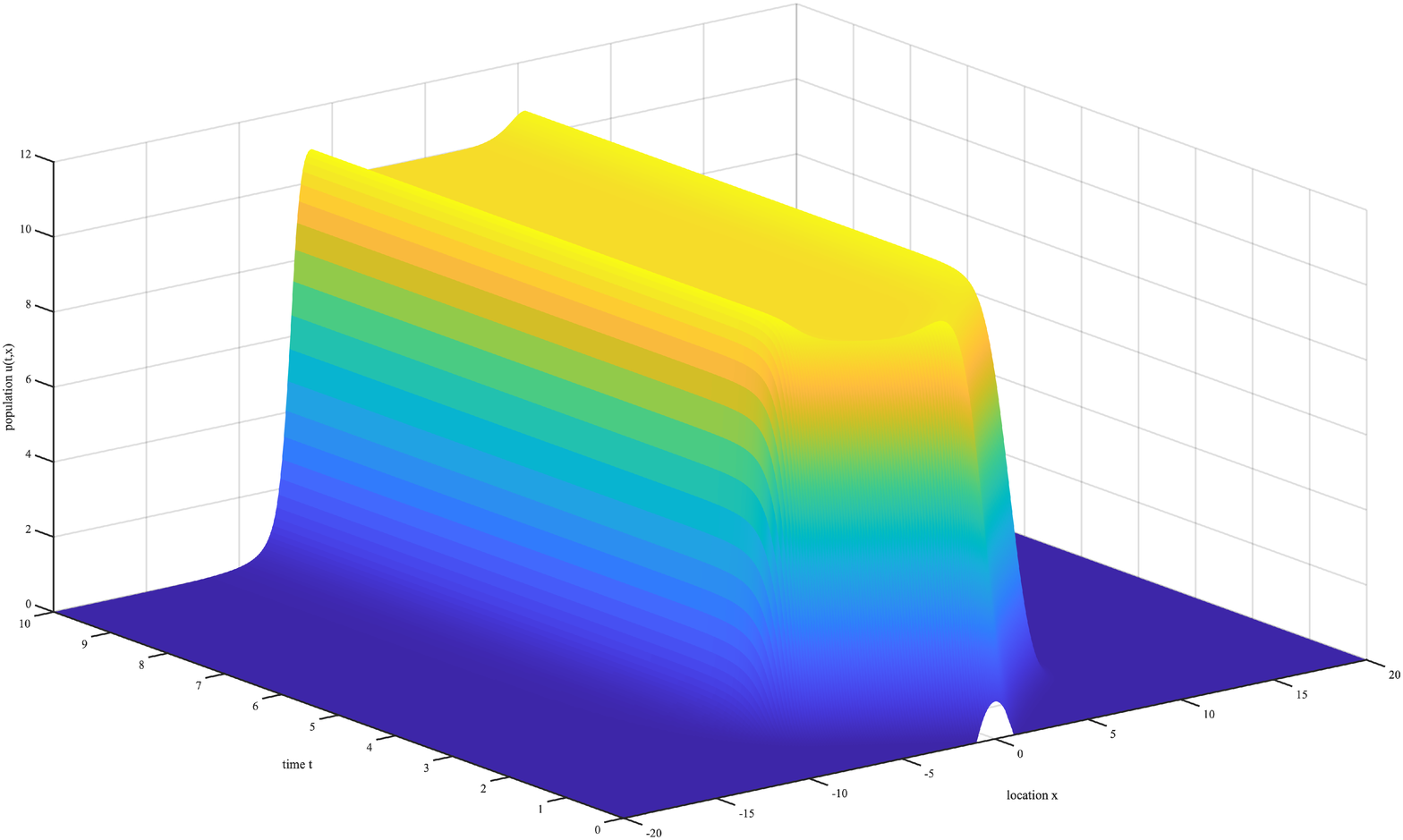}\label{Case2b1chi06L20-1}
	\end{minipage}} \quad
	\subfigure[]
{\begin{minipage}{7cm}
	\centering
	\includegraphics[scale=0.30]{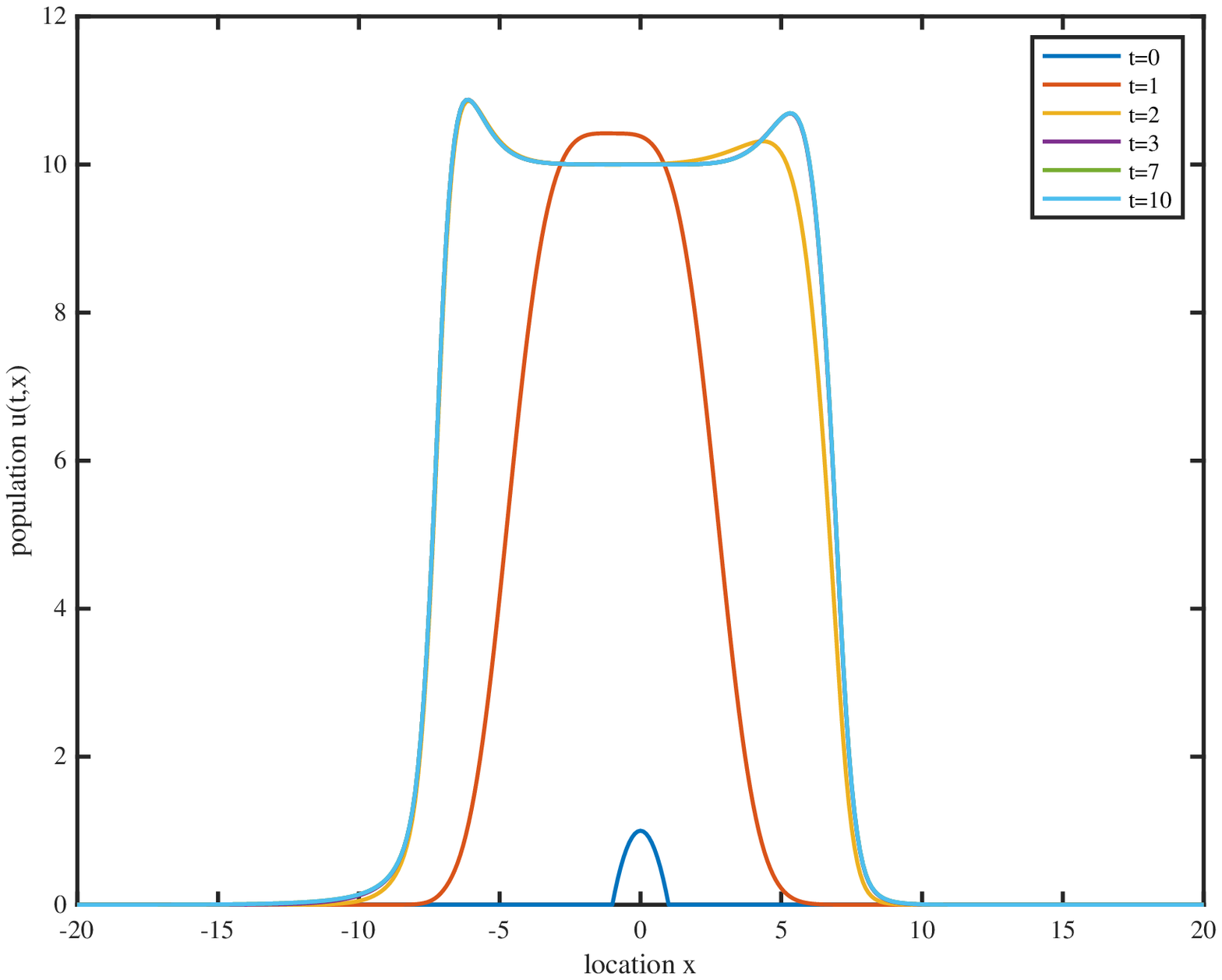} \label{Case2b1chi06L20}
	\end{minipage}}
 \caption{{\bf (a)} Evolution of numerical solution of \eqref{wave-cut-off-eq2-1} on the interval $[-20, 20]$ with $c=1$, $b=1$ and  $\chi=0.6$. {\bf (b)} numerical solution of \eqref{wave-cut-off-eq2-1} on the interval $[-20, 20]$ at time $t=0, 1, 2, 3, 7, 10$ with $c=1$, $b=1$ and $\chi=0.6$.}
\label{Case2b1chi06L20-1-1}
\end{figure}


\begin{figure}[H]
\centering
\subfigure[]
{\begin{minipage}{7cm}
	\centering
	\includegraphics[scale=0.17]{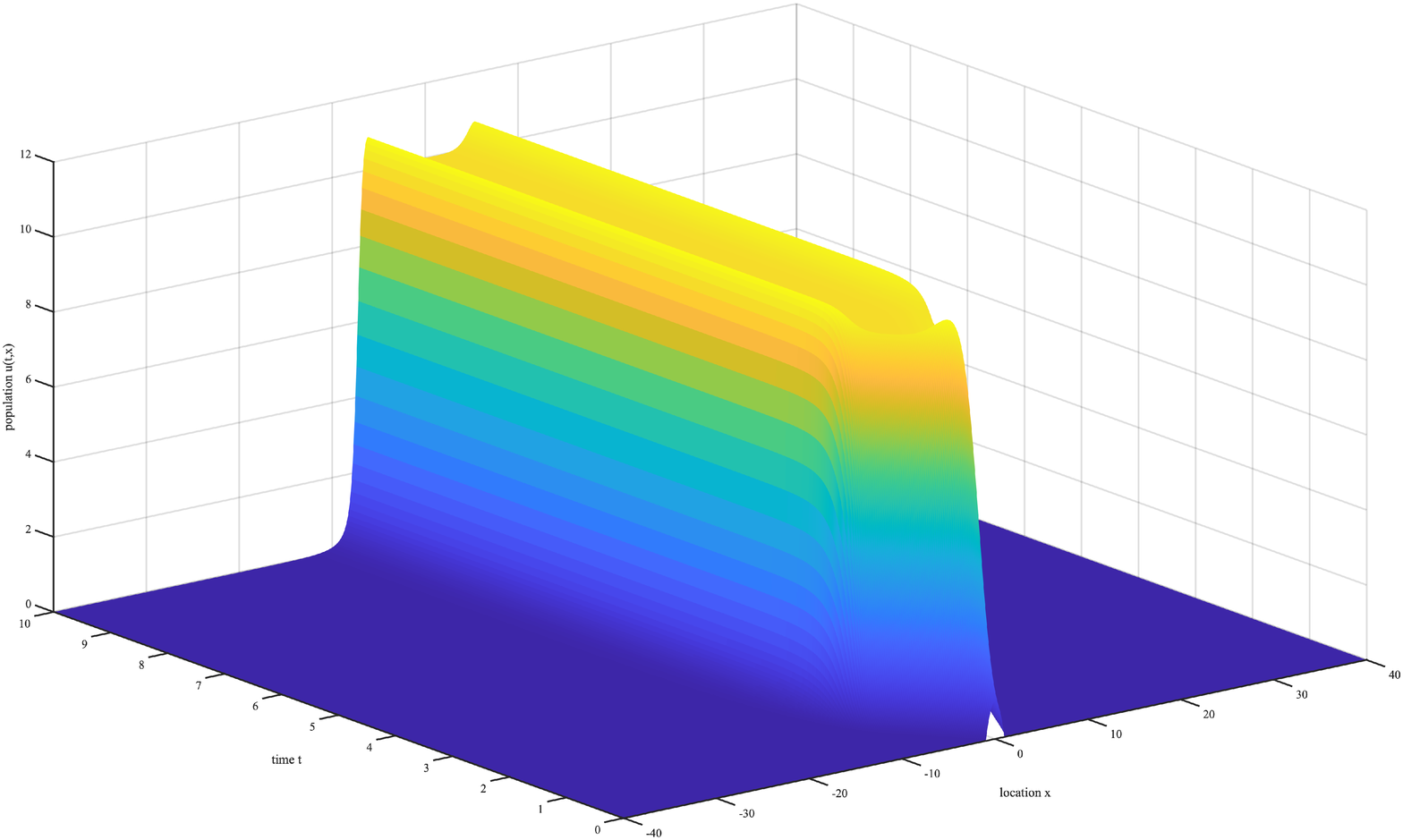} \label{Case2b1chi06L40-1}
	\end{minipage}} \quad
	\subfigure[]
{\begin{minipage}{7cm}
	\centering
	\includegraphics[scale=0.30]{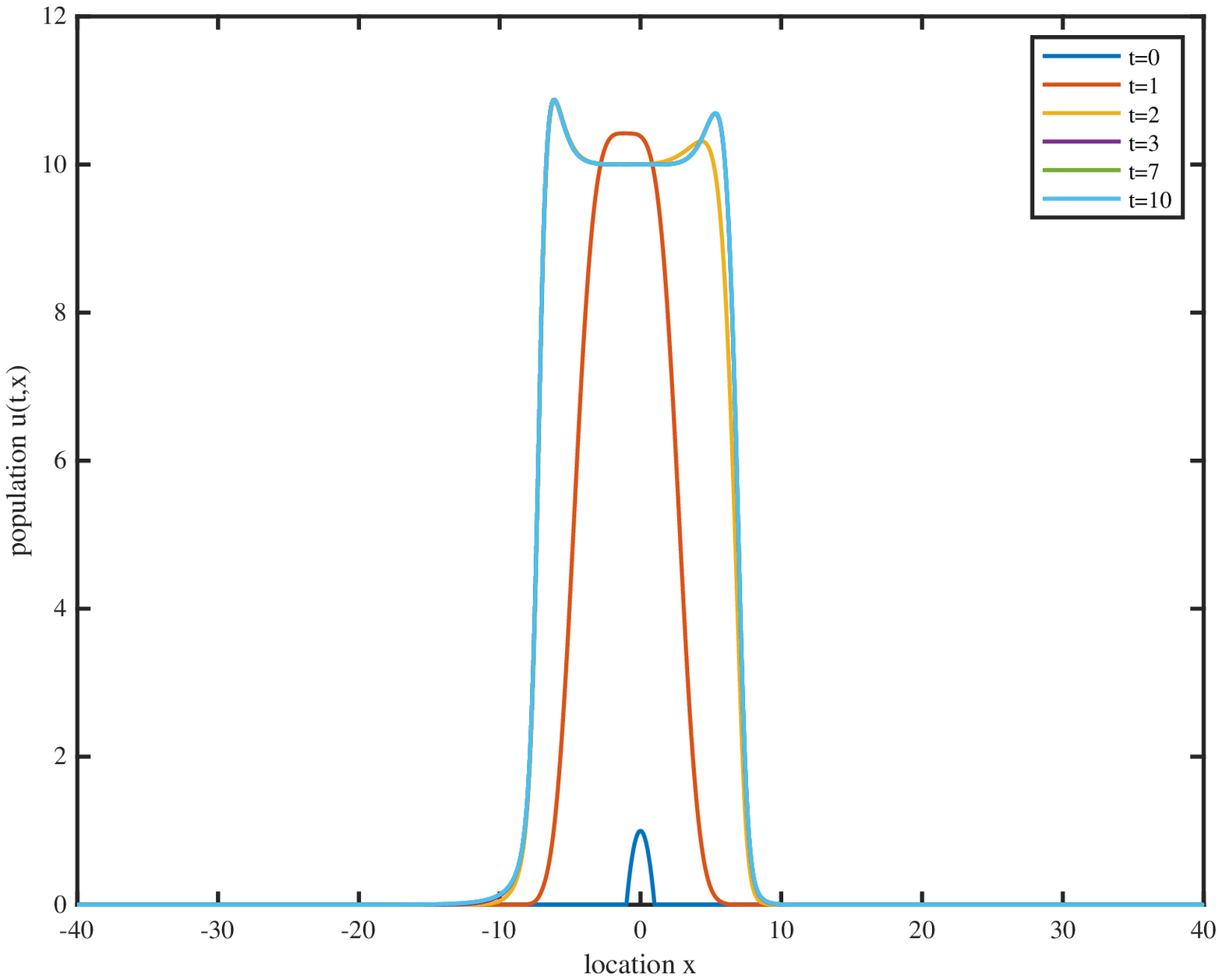} \label{Case2b1chi06L40}
	\end{minipage}}
 \caption{{\bf (a)} Evolution of numerical solution of \eqref{wave-cut-off-eq2-1} on the interval $[-40, 40]$ with $c=1$, $b=1$ and  $\chi=0.6$. {\bf (b)} numerical solution of \eqref{wave-cut-off-eq2-1} on the interval $[-40, 40]$ at time $t=0, 1, 2, 3, 7, 10$ with $c=1$, $b=1$ and $\chi=0.6$.}
\label{Case2b1chi06L40-1-1}
\end{figure}

\medskip

\noindent{\bf Numerical Experiment 2.}
Choose $c=1$ (then $\lambda_\infty(r(\cdot))>0$). Choose $b=0.7$ and $\chi=0.6$ (then $\chi\mu<b< \frac{3\chi\mu}{2}$).

We compute the numerical solution of \eqref{wave-cut-off-eq2-1} with $L=15, 20, 25, 30$, and $40$ on the time interval $[0,10]$. In all the cases,
we observe that the numerical solution changes very little after $t=3$ and stays away from $0$ on some fixed interval, which indicates
that the numerical solution converges to a positive stationary solution of \eqref{wave-cut-off-eq2-1} as $t\to\infty$. We also observe that
the numerical solution $u(t,x)$ at $t=10$ changes very little as $L$ increases, which indicates that the stationary solution of \eqref{wave-cut-off-eq2-1} converges as $L\to\infty$  to a stationary solution of \eqref{wave-eq}  or a forced wave solution of \eqref{Keller-Segel-eq0}  connecting
 $(0,0)$ and $(0,0)$.
We demonstrate the numerical solutions of \eqref{wave-cut-off-eq2-1} for the cases $L=20$ and $L=40$ in Figure \ref{Case2b07chi06L20-1-1}  and
Figure \ref{Case2b07chi06L40-1-1}, respectively.

\begin{figure}[H]
\centering
\subfigure[]
{\begin{minipage}{7cm}
	\centering
	\includegraphics[scale=0.17]{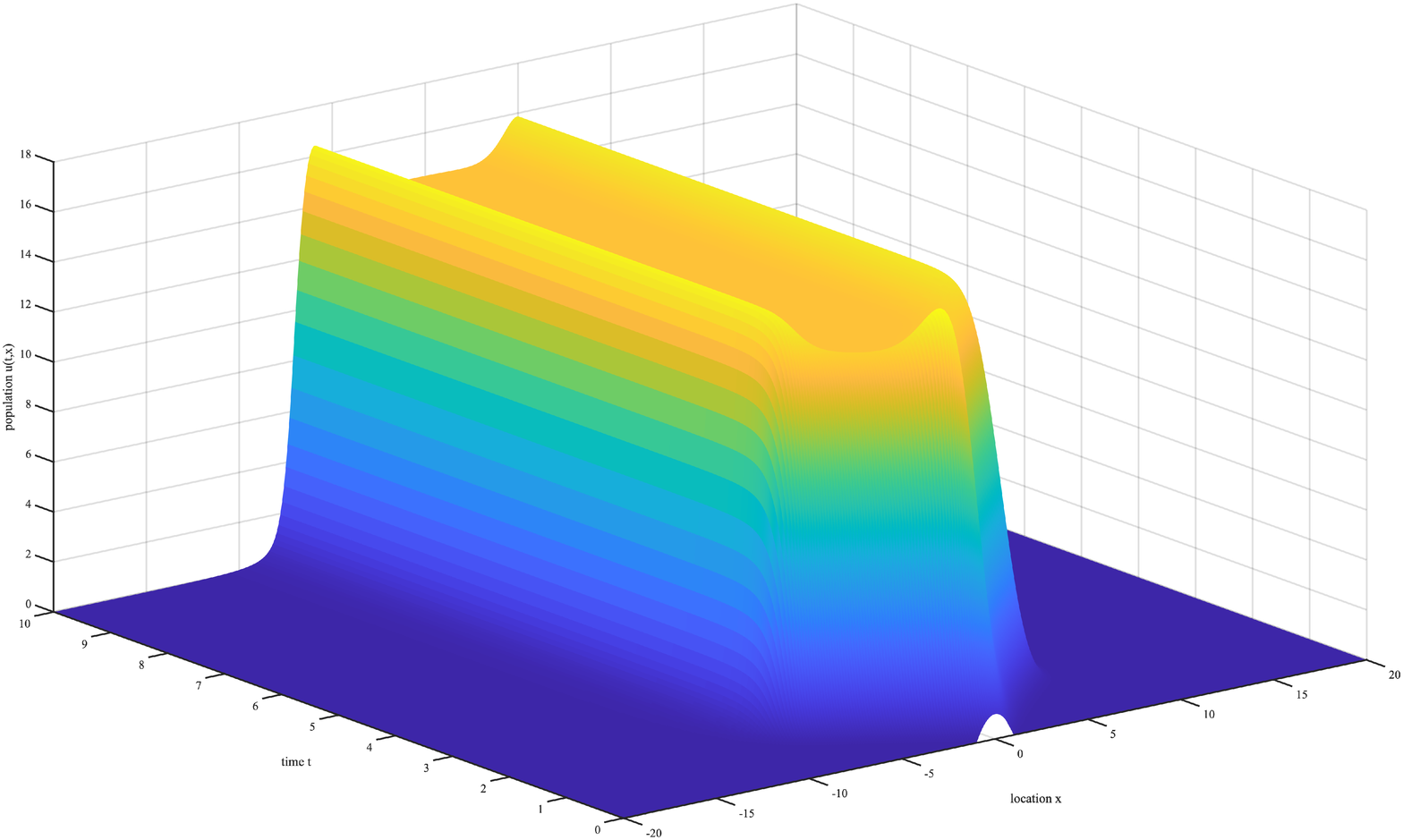} \label{Case2b07chi06L20-1}
	\end{minipage}} \quad
	\subfigure[]
{\begin{minipage}{7cm}
	\centering
	\includegraphics[scale=0.30]{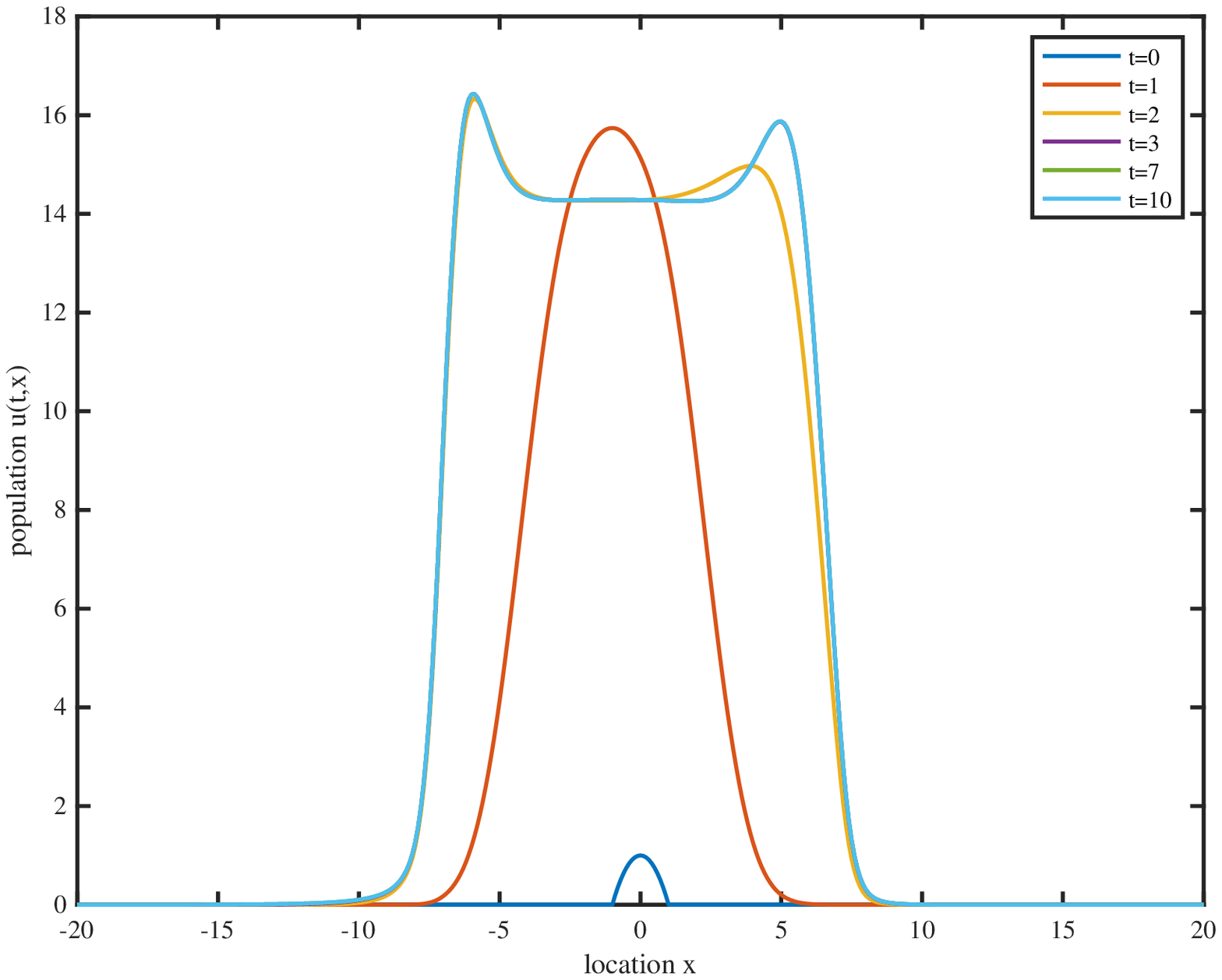} \label{Case2b07chi06L20}
	\end{minipage}}
 \caption{{\bf (a)} Evolution of numerical solution of \eqref{wave-cut-off-eq2-1} on the interval $[-20, 20]$ with $c=1$,  $b=0.7$ and  $\chi=0.6$. {\bf (b)} numerical solution of \eqref{wave-cut-off-eq2-1} on the interval $[-20, 20]$ at time $t=0, 1, 2, 3, 7, 10$ with $c=1$, $b=0.7$ and $\chi=0.6$.}
\label{Case2b07chi06L20-1-1}
\end{figure}

\begin{figure}[H]
\centering
\subfigure[]
{\begin{minipage}{7cm}
	\centering
	\includegraphics[scale=0.17]{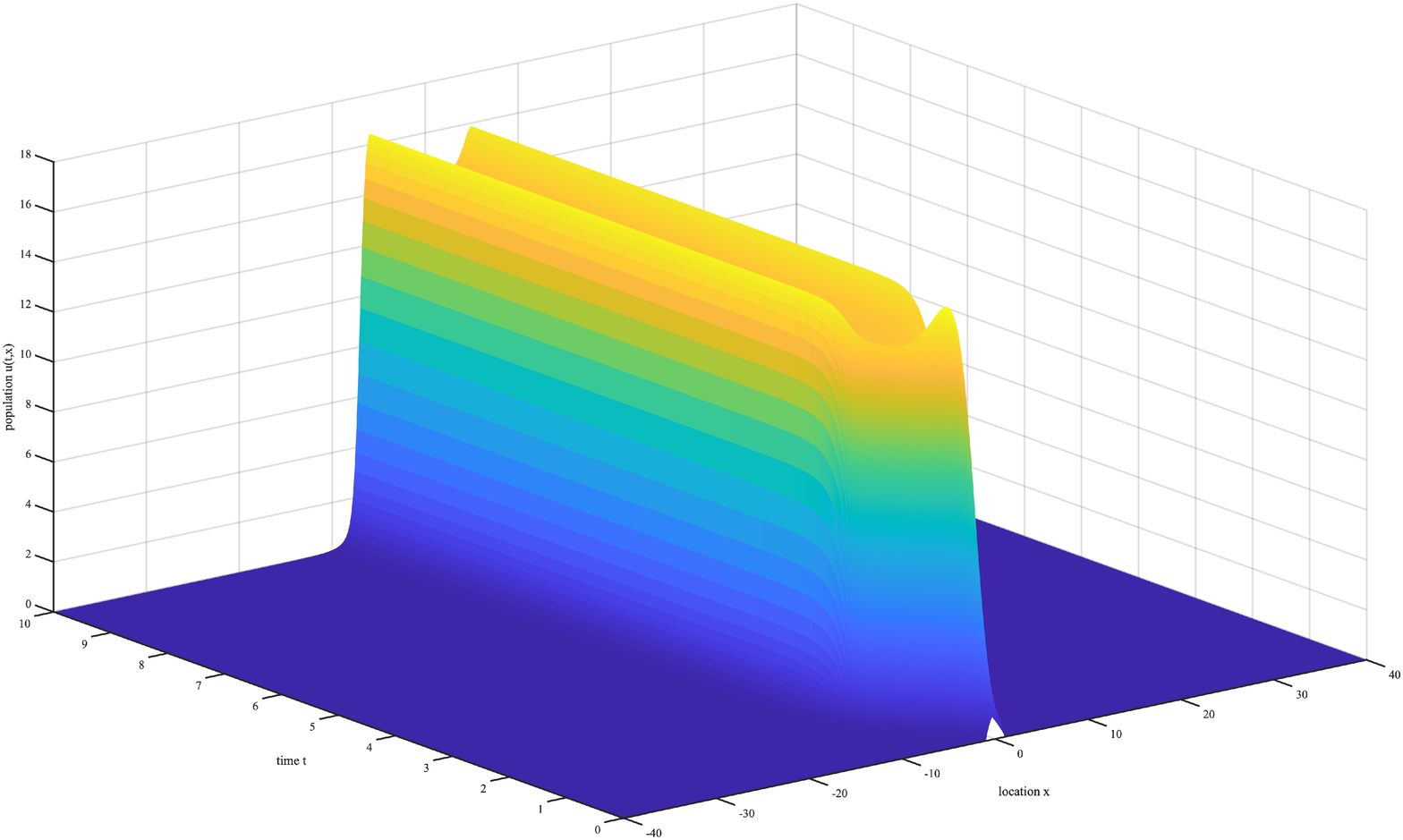} \label{Case2b07chi06L40-1}
	\end{minipage}} \quad
	\subfigure[]
{\begin{minipage}{7cm}
	\centering
	\includegraphics[scale=0.30]{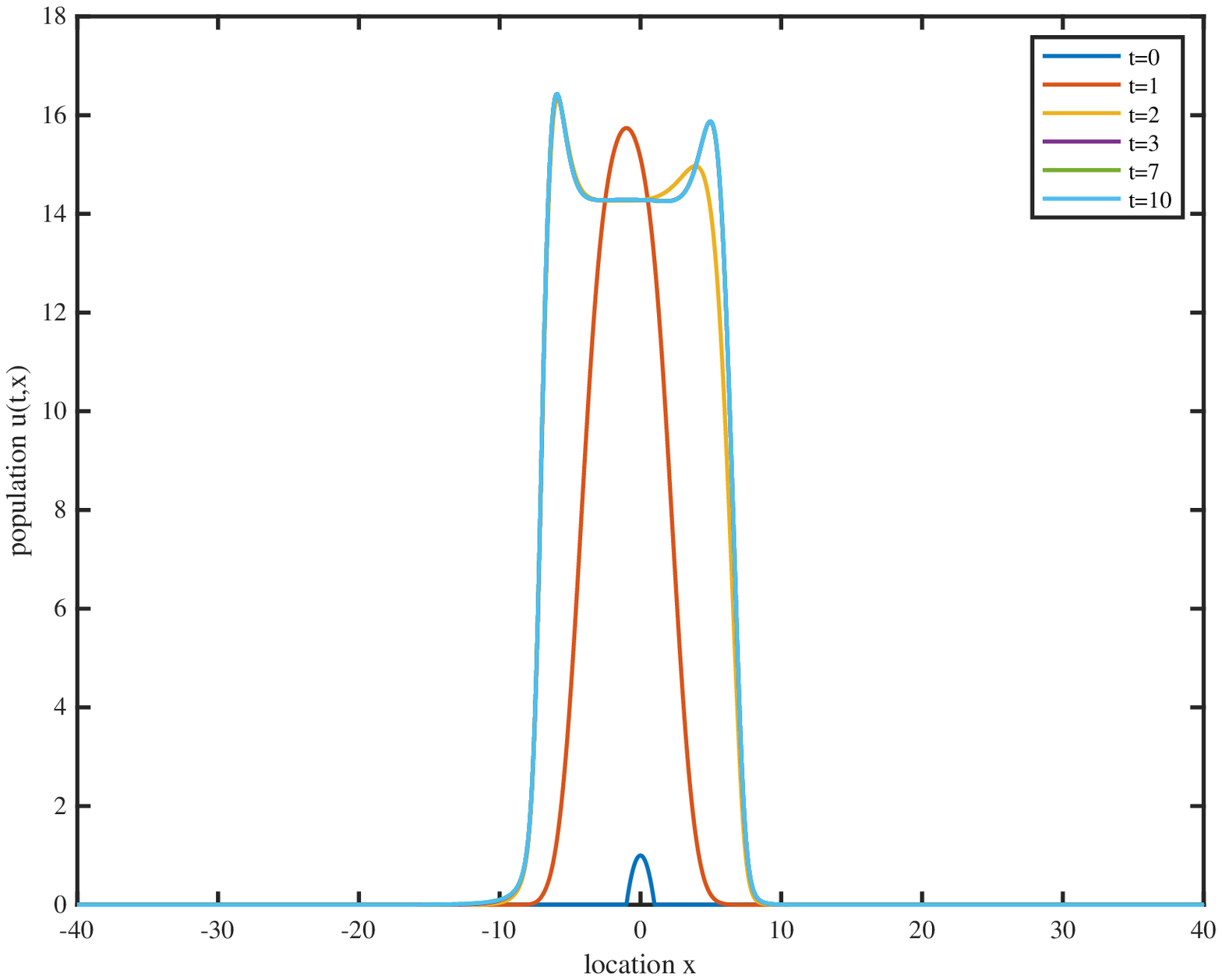} \label{Case2b07chi06L40}
	\end{minipage}}
 \caption{{\bf (a)} Evolution of numerical solution of \eqref{wave-cut-off-eq2-1} on the interval $[-40, 40]$ with $c=1$, $b=0.7$ and  $\chi=0.6$. {\bf (b)} numerical solution of \eqref{wave-cut-off-eq2-1} on the interval $[-40, 40]$ at time $t=0, 1, 2, 3, 7, 10$ with $c=1$, $b=0.7$ and $\chi=0.6$.}
\label{Case2b07chi06L40-1-1}
\end{figure}

\medskip

\noindent{\bf Numerical Experiment 3.}
Let $c=6.5$ (hence $c>c^*$).  Let $b=1$ and $\chi=0.6$ (hence $b> \frac{3\chi\mu}{2}$).

\smallskip

We compute the numerical solution of \eqref{wave-cut-off-eq2-1} with $L=15, 20, 25, 30, 40$ on the time interval $[0,30]$.
For all the choices of $L$, we observe that the numerical solution of \eqref{wave-cut-off-eq2-1}
becomes very small after $t=20$, which indicates that the numerical solution converges to the zero solution of \eqref{wave-cut-off-eq2-1}
as $t\to\infty$, and also indicates that
 \eqref{wave-eq} has no positive stationary solutions  or \eqref{Keller-Segel-eq0}  has  no forced wave solutions in the case that $c>c^*$ and $b>\frac{3}{2} \chi\mu$ which matches the theoretical result \cite[Theorem 1.3(1)]{ShXu}.
We demonstrate the numerical solutions of \eqref{wave-cut-off-eq2-1} for the case $L=20$ and $L=40$ in Figure \ref{Case2c65b1chi06L20-1-1}  and
Figure \ref{Case2c65b1chi06L40-1-1} , respectively.


\begin{figure}[H]
\centering
\subfigure[]
{\begin{minipage}{7cm}
	\centering
	\includegraphics[scale=0.17]{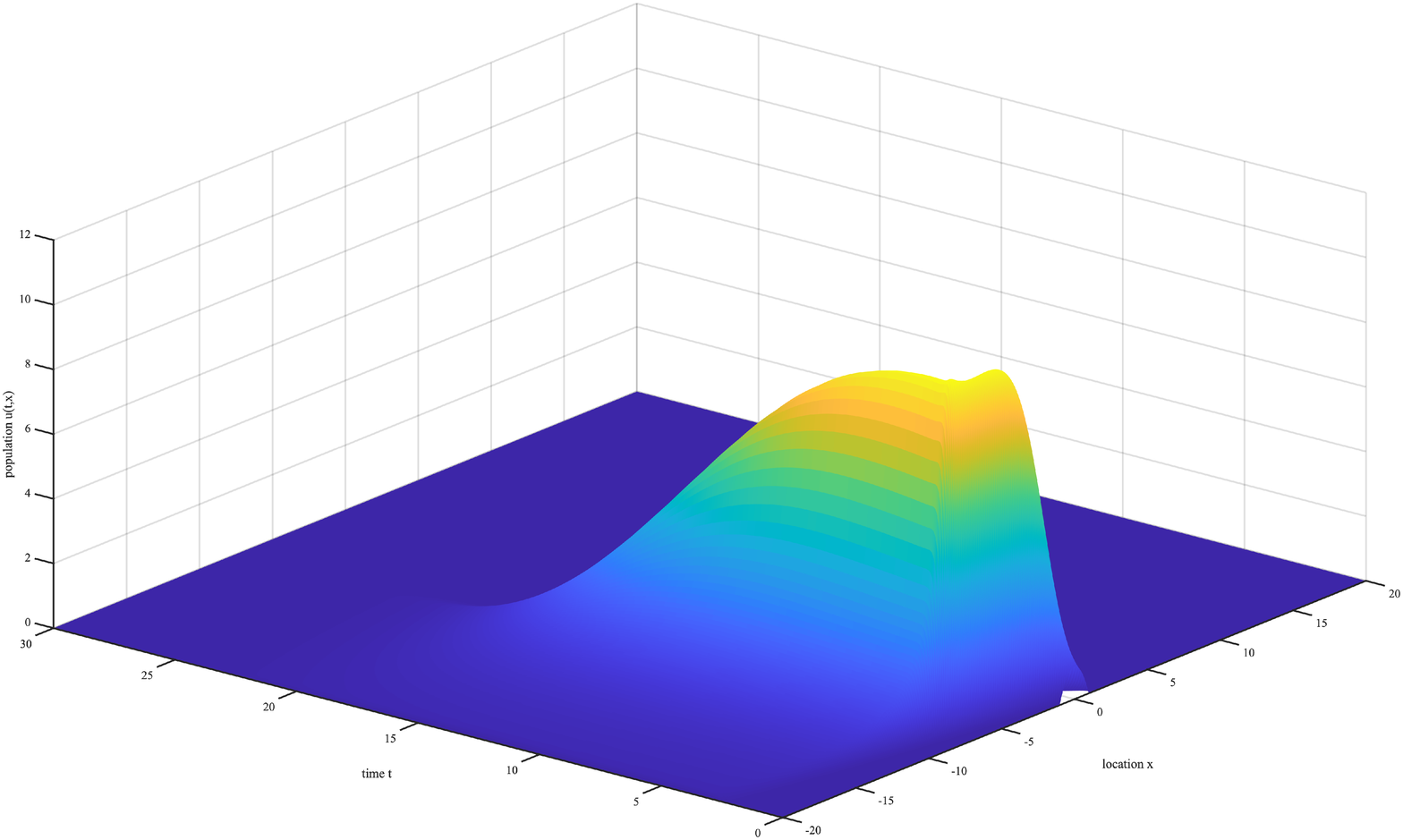} \label{Case2c65b1chi06L20-1}
	\end{minipage}} \quad
	\subfigure[]
{\begin{minipage}{7cm}
	\centering
	\includegraphics[scale=0.30]{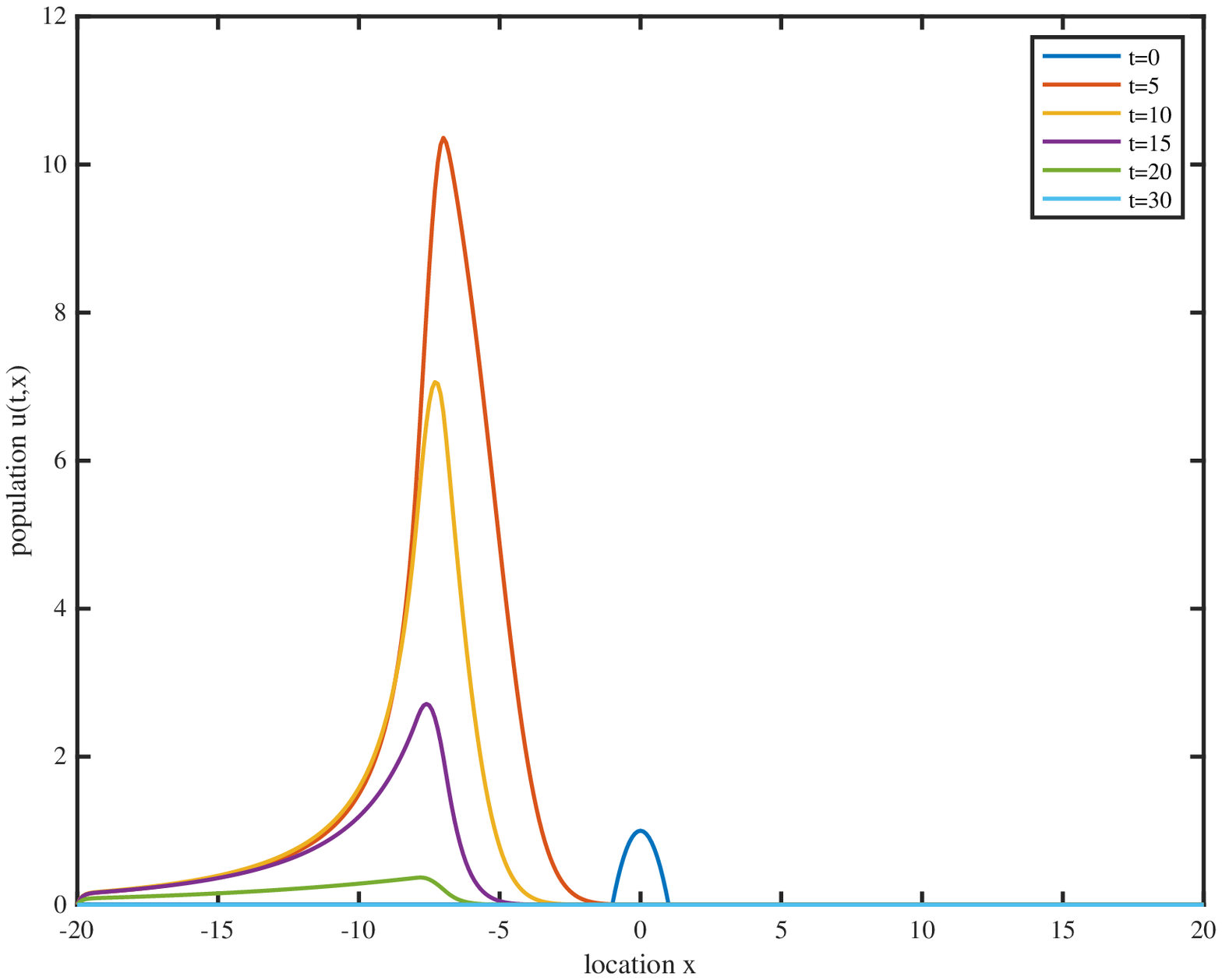} \label{Case2c65b1chi06L20}
	\end{minipage}}
 \caption{{\bf (a)} Evolution of numerical solution of \eqref{wave-cut-off-eq2-1} on the interval $[-20, 20]$ with $c=6.5$, $b=1$ and  $\chi=0.6$. {\bf (b)} numerical solution of \eqref{wave-cut-off-eq2-1} on the interval $[-20, 20]$ at time $t=0, 5, 10, 15, 20, 30$ with $c=6.5$, $b=1$ and $\chi=0.6$.}
\label{Case2c65b1chi06L20-1-1}
\end{figure}


\begin{figure}[H]
\centering
\subfigure[]
{\begin{minipage}{7cm}
	\centering
	\includegraphics[scale=0.17]{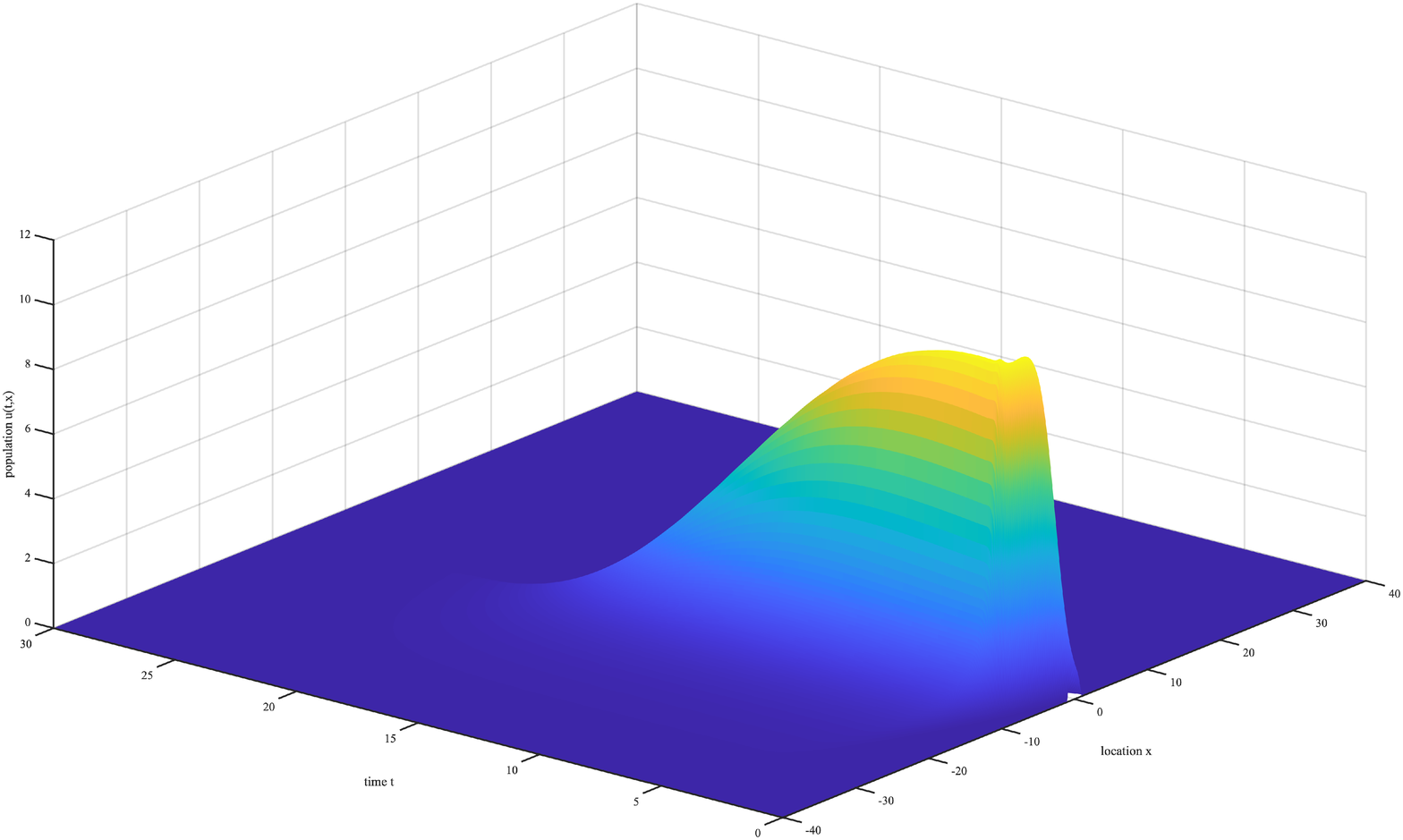} \label{Case2c65b1chi06L40-1}
	\end{minipage}} \quad
	\subfigure[]
{\begin{minipage}{7cm}
	\centering
	\includegraphics[scale=0.30]{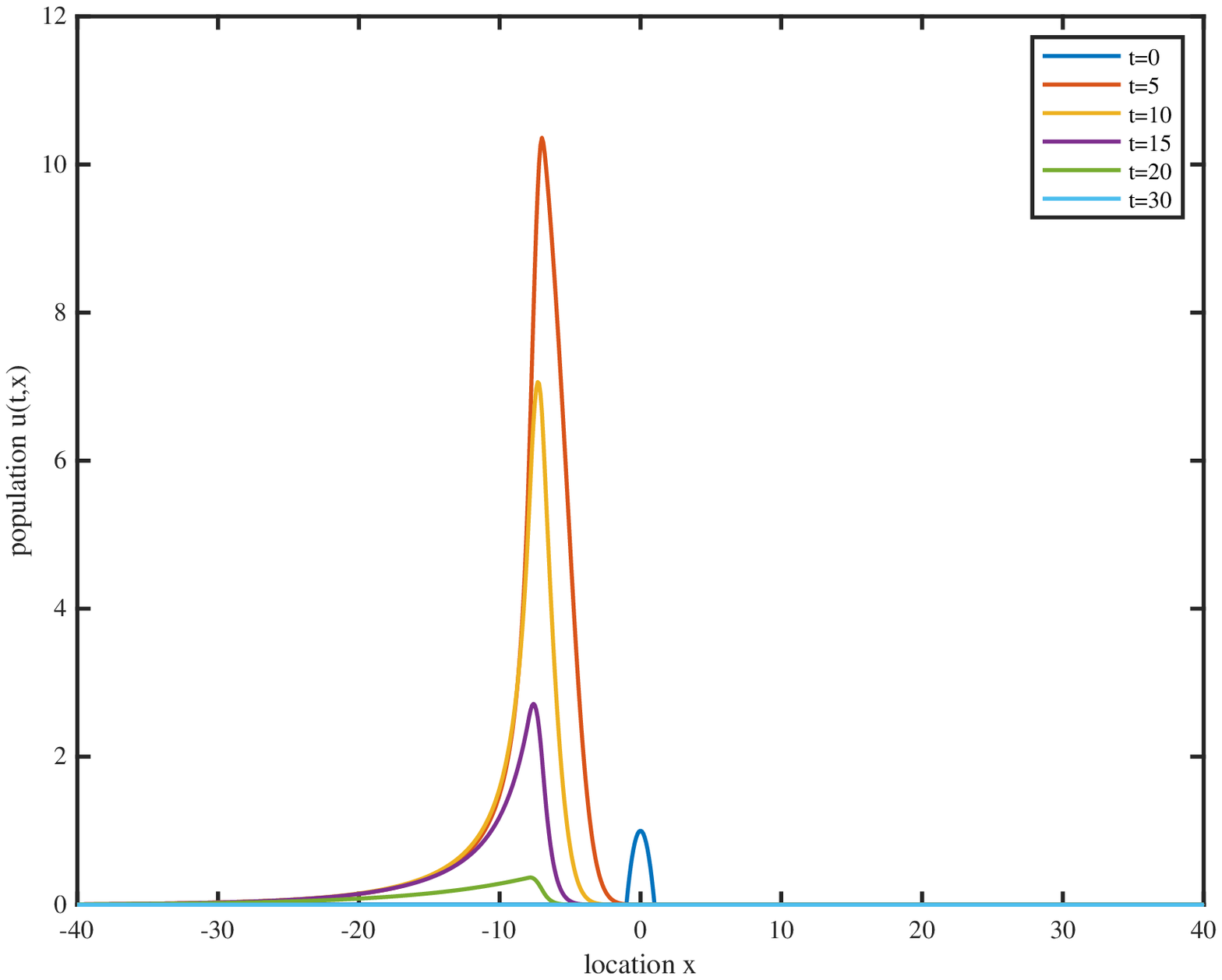} \label{Case2c65b1chi06L40}
	\end{minipage}}
 \caption{{\bf (a)} Evolution of numerical solution of \eqref{wave-cut-off-eq2-1} on the interval $[-40, 40]$ with $c=6.5$, $b=1$ and  $\chi=0.6$. {\bf (b)} numerical solution of \eqref{wave-cut-off-eq2-1} on the interval $[-40, 40]$ at time $t=0, 5, 10, 15, 20, 30$ with $c=6.5$, $b=1$ and $\chi=0.6$.}
\label{Case2c65b1chi06L40-1-1}
\end{figure}

Similarly, if $c=-6.5$, $b=1$ and $\chi=0.6$, we observe that the numerical solution of \eqref{wave-cut-off-eq2-1}
becomes very small after certain time, which indicates that the numerical solution converges to the zero solution of \eqref{wave-cut-off-eq2-1}
as $t\to\infty$, and also indicates that
 \eqref{wave-eq} has no positive stationary solutions  or \eqref{Keller-Segel-eq0}  has  no forced wave solutions in the case that $c<-c^*$ and $b>\frac{3}{2}\chi\mu$ which matches the theoretical result \cite[Theorem 1.3(1)]{ShXu}.

\begin{rk}
\begin{itemize}
\item[(1)]
The numerical simulations above supports our Theorem \ref{forced-wave-thm2} and also tells us that the assumptions in
Theorem \ref{forced-wave-thm2} may be weakened. Based on these numerical simulations, we conjecture that if $b>\chi\mu$ and $\lambda_{\infty}(r(\cdot))>0$, there is a forced wave solution $(u(t,x),v(t,x))=(\phi(x-ct),\psi(x-ct))$ connecting $(0,0)$ and $(0,0)$, that is,
$\phi(x)>0$ for all $x\in\R$ and $\phi(\pm\infty)=0$. If $b>\chi\mu$ and $|c|>c^*$, there is no forced wave solution $(u(t,x),v(t,x))=(\phi(x-ct),\psi(x-ct))$ connecting $(0,0)$ and $(0,0)$, that is,
$\phi(x)>0$ for all $x\in\R$ and $\phi(\pm\infty)=0$.

\item[(2)]
In these three numerical simulations, we used the same space step size $h=0.1$ and the same time step size $\tau=0.002$, which satisfy the numerical stable condition $\frac{\tau}{h^2}<\frac{1}{2}$. Again, we do not give the accuracy analysis of the simulations
in this paper. To see the  reliability of the numerical results,
we also tried different values of $h$ and $\tau$. For example, let $h=0.1$ be fixed, let $\tau=0.001, 0.002, 0.004$ respectively; let $h=0.2$ be fixed, let $\tau=0.01, 0.005, 0.0025$ respectively; let $h=0.05$ be fixed, let $\tau=0.001, 0.0005, 0.00025$ respectively. All the graphs we got do not change much.

\item[(3)]
We also tried to use different initial conditions to simulate the existence of forced wave solutions. For example, let
$$
u_0(x)=\begin{cases}
0 \quad  &{\rm if}\,\,  x\leq -1,\cr
sin(x+1)  \quad  &{\rm if}\,\,  -1<x<\pi-1,\cr
0                   \quad &{\rm if}\,\,  x\geq \pi-1.
\end{cases}
$$
We see similar dynamical scenarios. We then also conjecture that the forced wave solution of \eqref{Keller-Segel-eq0}
is unique and stable in certain parameter region.
\end{itemize}

\end{rk}


\end{document}